\newlength\tindent
\DeclareSymbolFont{bbold}{U}{bbold}{m}{n}
\DeclareSymbolFontAlphabet{\mathbbold}{bbold}
\begin{document}

\theoremstyle{plain}
\newtheorem{theorem}{Theorem}[section]
\newtheorem{proposition}[theorem]{Proposition}
\newtheorem{corollary}[theorem]{Corollary}
\newtheorem{lemma}[theorem]{Lemma}
\newtheorem{conjecture}[theorem]{Conjecture}

\theoremstyle{definition}
\newtheorem{remark}[theorem]{Remark}
\newtheorem{question}[theorem]{Question}
\newtheorem{numbered}[theorem]{}
\newtheorem{example}[theorem]{Example}
\newtheorem{defn}[theorem]{Definition}
\title[Unbounded topologies and uo-convergence]
{Unbounded topologies and uo-convergence in locally solid vector lattices}

\author{Mitchell A. Taylor}
\address{Department of Mathematical and Statistical Sciences,
         University of Alberta, Edmonton, AB, T6G\,2G1, Canada.}
\email{mataylor@ualberta.ca}
\thanks{The author acknowledges support from NSERC and the University of Alberta.}

\keywords{ Banach
  lattice, vector lattice,
  un-topology, unbounded order convergence, locally solid topology.} \subjclass[2010]{Primary: 46A40. Secondary: 46B42}

\date{\today}

\begin{abstract}
Suppose $X$ is a vector lattice and there is a notion of convergence $x_{\alpha} \xrightarrow{\sigma} x$ in $X$. Then we can speak of an ``unbounded" version of this convergence by saying that $x_{\alpha} \xrightarrow{u\sigma} x$ if $\lvert x_\alpha-x\rvert \wedge u\xrightarrow{\sigma} 0$ for every $u \in X_+$. In the literature, the unbounded versions of the norm, order and absolute weak convergence have been studied. Here we create a general theory of unbounded convergence but with a focus on $uo$-convergence and those convergences deriving from locally solid topologies. We will see that, not only do the majority of recent results on unbounded norm convergence generalize, but they do so effortlessly. Not only that, but the stucture of unbounded topologies is clearer without a norm. We demonstrate this by removing metrizability, completeness, and local convexity from nearly all arguments, while at the same time making the proofs simpler and more general. We also give characterizations of minimal topologies in terms of unbounded topologies and $uo$-convergence.
\end{abstract}

\maketitle

\section{Preliminaries}

A net $(x_{\alpha})_{\alpha \in A}$ in a vector lattice $X$ is \textbf{\textit{order convergent}} to $x \in X$ if there exists a net $(y_{\beta})_{\beta \in B}$, possibly over a different index set, such that $y_{\beta} \downarrow 0$ and for each $\beta \in B$ there exists $\alpha_0 \in A$ satisfying $\lvert x_{\alpha}-x\rvert \leq y_{\beta}$ for all $\alpha \geq \alpha_0$. We will write $x_{\alpha} \xrightarrow{o}x$ to denote this convergence. Recall that a net $(x_{\alpha})$ in a vector lattice $X$ is \textbf{\textit{unbounded $o$-convergent}} (or \textbf{\textit{$uo$-convergent}}) to $x\in X$ if $\lvert x_{\alpha}-x\rvert \wedge u \xrightarrow{o} 0$ for all $u \in X_+$. We will write $x_{\alpha} \xrightarrow{uo} x$ to indicate that the net $(x_{\alpha})$ $uo$-converges to $x$. It is clear that order convergent nets are $uo$-convergent. It is also known that these convergences may fail to be topological; there may be no topology on $X$ such that nets converge, say, in order, if and only if they converge with respect to the topology. This fact will distinguish $uo$-convergence from the rest of the unbounded convergences described in this paper.  For general results on $uo$-convergence we refer the reader to [\ref{708}] and [\ref{716}].
\\

In classical literature such as [\ref{701}], [\ref{705}] and [\ref{709}], the definition of order convergence is slightly different. In those books a net $(x_{\alpha})_{\alpha \in A}$ is said to be order convergent to $x\in X$ if there exists a net $(y_{\alpha})_{\alpha \in A}$ such that $y_{\alpha} \downarrow 0$ and $\lvert x_{\alpha}-x\rvert \leq y_{\alpha}$ for all $\alpha \in A$. We will write $x_{\alpha} \xrightarrow{o_1}x$ to distinguish this convergence, but it will not come up often. As needed, it will be shown that properties of locally solid vector lattices are independent of the definition of order convergence. Keeping in mind this slight discrepancy, the reader is referred to [\ref{705}] for all undefined terms. Throughout this paper, all vector lattices are assumed Archimedean.

\section{Basic results on unbounded locally solid topologies}
\begin{defn}
Suppose that $X$ is a vector lattice and $\tau$ is a (not necessarily Hausdorff) linear topology on $X$. We say that a net $(x_{\alpha})\subseteq X$ is \textbf{\textit{unbounded $\tau$-convergent}} to $x\in X$ if $\lvert x_{\alpha}-x\rvert \wedge u\xrightarrow{\tau}0$ for all $u \in X_+$. 
\end{defn}
\begin{defn}
A (not necessarily Hausdorff) topology $\tau$ on a vector lattice $X$ is said to be \textbf{\textit{locally solid}} if it is linear and has a base at zero consisting of solid sets.
\end{defn}
The next theorem justifies the interest in unbounded convergences deriving from locally solid topologies.
\begin{theorem}\label{1}
If $\tau$ is a locally solid topology on a vector lattice $X$ then the unbounded $\tau$-convergence is also a topological convergence on $X$. Moreover, the corresponding topology, $u\tau$, is locally solid. It is Hausdorff if and only if $\tau$ is.
\end{theorem}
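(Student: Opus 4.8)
The plan is to exhibit an explicit neighbourhood base at zero for the candidate topology $u\tau$ and verify it satisfies the axioms for a locally solid linear topology. Fix a base $\{U_i\}_{i \in I}$ at zero for $\tau$ consisting of solid sets. For each $i \in I$ and each $u \in X_+$, define
\[
 U_{i,u} = \{ x \in X : \lvert x\rvert \wedge u \in U_i \}.
\]
I claim the collection $\mathcal{N}$ of all finite intersections of sets of the form $U_{i,u}$ is a neighbourhood base at zero for a locally solid topology, and that convergence of a net to zero in this topology is exactly unbounded $\tau$-convergence. The guiding intuition is that $u\tau$ is the initial topology induced by the family of lattice maps $x \mapsto \lvert x\rvert \wedge u$, pulled back from $\tau$; most of the work is checking that this initial topology is linear, which is not automatic since these maps are not linear.

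The steps I would carry out, in order: (1) Each $U_{i,u}$ is solid — immediate from solidity of $U_i$ and the fact that $\lvert y\rvert \le \lvert x\rvert$ implies $\lvert y\rvert \wedge u \le \lvert x\rvert \wedge u$. Finite intersections of solid sets are solid, so every member of $\mathcal{N}$ is solid and absorbing (absorbing because $\lvert \lambda x\rvert \wedge u \to 0$ in $\tau$ as $\lambda \to 0$ for fixed $x$, using that $\tau$ is linear, hence $\lambda x \xrightarrow{\tau} 0$, hence $\lvert \lambda x\rvert \wedge u \xrightarrow{\tau}0$; alternatively and more simply, $\lvert \lambda x \rvert \wedge u \le |\lambda|\, |x|$ when $|\lambda|\le 1$... but since $U_i$ need not be balanced I would just use linearity of $\tau$). (2) Verify the base axioms: $\mathcal{N}$ is closed under finite intersections by construction; and given $U_{i,u}$, pick $j$ with $U_j + U_j \subseteq U_i$ — then $U_{j,u} + U_{j,u} \subseteq U_{i,u}$, using the Riesz inequality $\lvert x+y\rvert \wedge u \le \lvert x\rvert \wedge u + \lvert y\rvert \wedge u$ (valid in any vector lattice) together with solidity of $U_i$ to absorb the sum back in. This is the crucial algebraic point and the main obstacle: one must know that $\lvert x+y\rvert\wedge u\le \lvert x\rvert\wedge u+\lvert y\rvert\wedge u$, which follows from $\lvert x+y\rvert \le \lvert x\rvert + \lvert y\rvert$ and the distributive/subadditivity law for $\wedge$. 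Balancedness (or the weaker property needed for a linear topology) is handled by replacing each $U_i$ by a balanced solid shrinking if desired, or by invoking the standard characterization of linear topologies via a base at zero of balanced absorbing sets together with the $+$ condition. (3) Conclude by the standard theorem (e.g. as in the reference [\ref{705}]) that a filter base at zero of solid, balanced, absorbing sets closed under finite intersections and satisfying the $U+U\subseteq V$ refinement property generates a unique locally solid linear topology $u\tau$.

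Finally I would check the two remaining assertions. That $u\tau$-convergence coincides with unbounded $\tau$-convergence: a net $x_\alpha \to x$ in $u\tau$ iff for every basic neighbourhood $U_{i,u}$ eventually $x_\alpha - x \in U_{i,u}$, i.e. eventually $\lvert x_\alpha - x\rvert \wedge u \in U_i$; since the $U_i$ form a base for $\tau$ and $u$ ranges over all of $X_+$, this says exactly $\lvert x_\alpha - x\rvert \wedge u \xrightarrow{\tau} 0$ for every $u \in X_+$ (for one direction fix $u$ and intersect over $i$; for the other note a general basic $u\tau$-neighbourhood is a finite intersection $\bigcap_k U_{i_k, u_k}$, and taking $u = u_1 \vee \dots \vee u_m$ and using $U_{i,u} \supseteq U_{i,v}$ when $v \ge u$... actually $\lvert x\rvert\wedge u \le \lvert x\rvert \wedge v$, so $U_{i,v}\subseteq U_{i,u}$; handle the finitely-many-$i$'s by the base property of $\tau$). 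For the Hausdorff claim: $u\tau$ Hausdorff means $\bigcap \mathcal{N} = \{0\}$, i.e. $\lvert x\rvert \wedge u \in \bigcap_i U_i$ for all $u$ forces $x = 0$; if $\tau$ is Hausdorff then $\bigcap_i U_i = \{0\}$, so $\lvert x\rvert \wedge u = 0$ for all $u \in X_+$, and taking $u = \lvert x\rvert$ gives $\lvert x\rvert = 0$. Conversely if $u\tau$ is Hausdorff then so is $\tau$, since $\tau \supseteq u\tau$ — indeed $U_{i,u} \subseteq U_i$ whenever... hmm, that inclusion is false in general; instead observe directly that $\tau$-convergence implies $u\tau$-convergence (as $\lvert x_\alpha\rvert \wedge u \le \lvert x_\alpha\rvert$ and $U_i$ solid), so a $\tau$-convergent net to two limits $u\tau$-converges to both, forcing them equal. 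I expect step (2), the verified additivity $U_{j,u}+U_{j,u}\subseteq U_{i,u}$, to be where all the genuine content sits; everything else is bookkeeping with solid sets and the Riesz identities.
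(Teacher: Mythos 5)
Your proposal is correct and follows essentially the same route as the paper: the same base $U_{i,u}=\{x:\lvert x\rvert\wedge u\in U_i\}$, the same Riesz inequality $\lvert x+y\rvert\wedge u\le\lvert x\rvert\wedge u+\lvert y\rvert\wedge u$ plus solidity of $U_i$ for additivity, and the same $u=\lvert x\rvert$ trick for the Hausdorff direction. (For the converse Hausdorff step, the inclusion you were groping for is $U_i\subseteq U_{i,u}$, which gives $u\tau\subseteq\tau$ directly; your fallback via uniqueness of net limits is also fine, and solidity alone already yields balancedness and, via $U_i\subseteq U_{i,u}$, absorbency, so no shrinking of the base is needed.)
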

\begin{proof}
Since $\tau$ is locally solid it has a base $\{U_i\}_{i\in I}$ at zero consisting of solid neighbourhoods. For each $i \in I$ and $u \in X_+$ define $U_{i,u}:=\{x \in X: \lvert x\rvert \wedge u \in U_i\}$. We claim that the collection $\mathcal{N}_0:=\{U_{i,u} : i\in I, u\in X_+\}$ is a base of neighbourhoods of zero for a locally solid topology; we will call it $u\tau$. Notice that $(x_{\alpha})$ unbounded $\tau$-converges to $0$ iff every set in $\mathcal{N}_0$ contains a tail of the net. After noting that the unbounded $\tau$-convergence is translation invariant, this means the unbounded $\tau$-convergence is exactly the convergence given by this topology. Notice also that $U_i \subseteq U_{i,u}$ and, since $U_i$ is solid, so is $U_{i,u}$.
\\

We now verify that $\mathcal{N}_0$ is a base at zero. Trivially, every set in $\mathcal{N}_0$ contains $0$. We now show that the intersection of any two sets in $\mathcal{N}_0$ contains another set in $\mathcal{N}_0$. Take $U_{i,u_1}, U_{j,u_2} \in \mathcal{N}_0$. Then $U_{i,u_1}\cap U_{j,u_2}=\{x \in X: \lvert x\rvert \wedge u_1\in U_i \ \& \ \lvert x\rvert \wedge u_2 \in U_j\}$. Since $\{U_i\}$ is a base we can find $k\in I$ such that $U_k \subseteq U_i \cap U_j$. We claim that $U_{k,u_1 \vee u_2} \subseteq U_{i,u_1}\cap U_{j,u_2}.$ Indeed, if $x \in U_{k,u_1 \vee u_2}$, then $\lvert x\rvert \wedge(u_1\vee u_2) \in U_k \subseteq U_i \cap U_j$. Therefore, since $\lvert x\rvert \wedge u_1 \leq \lvert x\rvert \wedge(u_1\vee u_2) \in U_i \cap U_j \subseteq U_i$ and $U_i$ is solid, we have $x \in U_{i,u_1}$. Similarly, $x \in U_{j,u_2}$.\\

We know that for every  $i$ there exists $j$ such that  $U_j+U_j \subseteq U_i$. From this we deduce  that for all $i$ and all $u$,  if  $x,y \in U_{j,u}$ then
\begin{equation}
\lvert x+y\rvert \wedge u \leq \lvert x\rvert \wedge u+\lvert y\rvert  \wedge u \in U_j+U_j \subseteq U_i
\end{equation}
so that $U_{j,u}+U_{j,u} \subseteq U_{i,u}$.
\\

If $\lvert \lambda\rvert\leq 1$ then $\lambda U_{i,u} \subseteq U_{i,u}$ because $U_{i,u}$ is solid. It follows from $U_i \subseteq U_{i,u}$ that $U_{i,u}$ is absorbing. This completes the verification by [\ref{701}] Theorem~5.6.
\\

Suppose further that $\tau$ is Hausdorff; we will verify that $\bigcap \mathcal{N}_0=\{0\}.$ Indeed, suppose that $x \in U_{i,u}$ for all $i \in I$ and $u \in X_+$. In particular, $x \in U_{i,\lvert x\rvert}$ which means that $\lvert x\rvert \in U_i$ for all $i \in I$. Since $\tau$ is Hausdorff, $\bigcap U_i=\{0\}$ and we conclude that $x=0$.
\\

Finally, if $u\tau$ is Hausdorff then $\tau$ is Hausdorff since $U_i \subseteq U_{i,u}$.
\end{proof}
\begin{remark}
If $\tau$ is the norm topology on a Banach lattice $X$, the corresponding $u\tau$-topology is called $un$-topology; it has been studied in [\ref{704}], [\ref{710}] and [\ref{706}]. It is easy to see that the weak and absolute weak topologies on $X$ generate the same unbounded convergence and, since the absolute weak topology is locally solid, this convergence is topological. It has been denoted $uaw$ and was studied in [\ref{702}].
\end{remark}
From now on, unless explicitly stated otherwise, throughout this paper the minimum assumption is that $X$ is an Archimedean vector lattice and $\tau$ is locally solid. The following straightforward result should be noted. It justifies the name \emph{unbounded} $\tau$-convergence.
\begin{proposition}\label{8}
If $x_{\alpha}\xrightarrow{\tau}0$ then $x_{\alpha}\xrightarrow{u\tau}0$. For order bounded nets the convergences agree.
\end{proposition}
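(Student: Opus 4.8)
The plan is to prove the two assertions in Proposition~\ref{8} directly from the definitions, using the solidity of the base neighbourhoods.

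First I would prove that $\tau$-convergence implies $u\tau$-convergence. Suppose $x_\alpha \xrightarrow{\tau} 0$ and fix $u \in X_+$. For any solid $\tau$-neighbourhood $U$ of zero, there is a tail of the net lying in $U$, say $x_\alpha \in U$ for all $\alpha \ge \alpha_0$. Since $\lvert x_\alpha \rvert \wedge u \le \lvert x_\alpha \rvert$ and $U$ is solid, $\lvert x_\alpha \rvert \wedge u \in U$ for all $\alpha \ge \alpha_0$. As the solid neighbourhoods form a base at zero, this shows $\lvert x_\alpha \rvert \wedge u \xrightarrow{\tau} 0$; since $u$ was arbitrary, $x_\alpha \xrightarrow{u\tau} 0$. (Equivalently, one can simply note $U \subseteq U_{i,u}$ from the construction in Theorem~\ref{1}, so every $u\tau$-neighbourhood contains a $\tau$-neighbourhood and hence $u\tau \subseteq \tau$.)

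Next I would handle the order bounded case. Suppose $(x_\alpha)$ is order bounded, say $\lvert x_\alpha \rvert \le v$ for all $\alpha$ and some $v \in X_+$, and suppose $x_\alpha \xrightarrow{u\tau} 0$; I must show $x_\alpha \xrightarrow{\tau} 0$. Apply the definition of $u\tau$-convergence with the specific choice $u = v$: then $\lvert x_\alpha \rvert \wedge v \xrightarrow{\tau} 0$. But $\lvert x_\alpha \rvert \wedge v = \lvert x_\alpha \rvert$ since $\lvert x_\alpha \rvert \le v$, and $\lvert x_\alpha \rvert \xrightarrow{\tau} 0$ is equivalent to $x_\alpha \xrightarrow{\tau} 0$ (because a linear topology with a base of solid sets satisfies $x_\alpha \xrightarrow{\tau} 0 \iff \lvert x_\alpha \rvert \xrightarrow{\tau} 0$, a standard fact for locally solid topologies). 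Combined with the first part, the two convergences agree on order bounded nets.

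There is no real obstacle here; the only points requiring a moment's care are the standard equivalence $x_\alpha \xrightarrow{\tau} 0 \iff \lvert x_\alpha \rvert \xrightarrow{\tau} 0$ for locally solid $\tau$ (which follows from solidity of the base together with $\big\lvert \lvert x_\alpha \rvert \big\rvert = \lvert x_\alpha \rvert$), and keeping the quantifier order straight --- $u\tau$-convergence requires the condition for \emph{every} $u \in X_+$, so for the forward direction we may freely specialize to $u = v$. The result also holds with $0$ replaced by an arbitrary limit $x$ by translation invariance of both convergences.
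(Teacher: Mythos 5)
Your proof is correct and is exactly the straightforward argument the paper has in mind (the paper omits the proof, calling the result straightforward): $u\tau\subseteq\tau$ via solidity of the base, and for order bounded nets one specializes the test vector to a bound $v$ and uses $\lvert x_\alpha\rvert\wedge v=\lvert x_\alpha\rvert$ together with $x_\alpha\xrightarrow{\tau}0\iff\lvert x_\alpha\rvert\xrightarrow{\tau}0$. No issues.
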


\begin{remark} Observe that $uu\tau=u\tau$, so there are no chains of unbounded topologies. To see this note that $x_{\alpha}\xrightarrow{uu\tau}x$ means that for any $u \in X_+$, $\lvert x_{\alpha}-x\rvert \wedge u\xrightarrow{u\tau}0$. Since the net $(\lvert x_{\alpha}-x\rvert \wedge u)$ is order bounded, this is the same as $\lvert x_{\alpha}-x\rvert \wedge u\xrightarrow{\tau}0$, which means $x_{\alpha}\xrightarrow{u\tau}x$. In another language, the map $\tau \mapsto u\tau$ from the set of locally solid topologies on $X$ to itself is idempotent. We give a name to the fixed points or, equivalently, the range, of this map:
\end{remark}
\begin{defn}\label{19}
A locally solid topology $\tau$ is \textbf{\textit{unbounded}} if $\tau=u\tau$ or, equivalently, if $\tau=u\sigma$ for some locally solid topology $\sigma$.
\end{defn}
We next present a few easy corollaries of Theorem~\ref{1} for use later in the paper. 
\begin{corollary}\label{1.1}
Lattice operations are uniformly continuous with respect to $u\tau$, and $u\tau$-closures of solid sets are solid.
\end{corollary}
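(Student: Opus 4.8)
The plan is to deduce both assertions from Theorem~\ref{1}, which already shows that $u\tau$ is a (possibly non-Hausdorff) locally solid topology; once that is known, these are instances of the general theory of locally solid vector lattices and follow from the elementary lattice inequalities together with the existence of a base at zero consisting of solid $u\tau$-neighbourhoods. One could simply invoke that theory, e.g.\ [\ref{705}], but I will sketch the self-contained arguments.

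For the uniform continuity of the lattice operations I would first reduce to the positive-part map $x\mapsto x^{+}$, using the identities $x\vee y=(x-y)^{+}+y$, $x\wedge y=x-(x-y)^{+}$, $|x|=2x^{+}-x$ and the $u\tau$-continuity of addition and scalar multiplication. For $x\mapsto x^{+}$, fix a solid $u\tau$-neighbourhood $V$ of $0$; since a solid set is symmetric and contains $|z|$ whenever it contains $z$, the hypothesis $x-y\in V$ yields $|x-y|\in V$, and then $|x^{+}-y^{+}|\le|x-y|$ together with solidity gives $x^{+}-y^{+}\in V$. So $W:=V$ witnesses uniform continuity at this stage, and the rest follows.

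For the closure statement, let $A$ be solid, let $b$ lie in the $u\tau$-closure of $A$, and suppose $|a|\le|b|$; I must produce points of $A$ arbitrarily $u\tau$-close to $a$. Given a solid $u\tau$-neighbourhood $V$ of $0$, choose $c\in A$ with $b-c\in V$ and truncate $a$ into the order interval $[-|c|,|c|]$ by setting $a':=\bigl((-|c|)\vee a\bigr)\wedge|c|$. Then $|a'|\le|c|$, so $a'\in A$ by solidity of $A$, while the standard truncation identity $|a-a'|=(|a|-|c|)^{+}$ together with $|a|\le|b|$ gives $|a-a'|\le(|b|-|c|)^{+}\le\bigl||b|-|c|\bigr|\le|b-c|\in V$; solidity of $V$ then yields $a-a'\in V$, hence $a'\in a+V$ by symmetry. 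Thus $(a+V)\cap A\neq\emptyset$ for every solid neighbourhood $V$ of $0$, and since such $V$ form a base by Theorem~\ref{1}, $a\in\overline{A}$.

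I do not anticipate a real obstacle. The one point to watch is that nothing above uses the Hausdorff property — and indeed it does not, since every step relies solely on the base of solid $u\tau$-neighbourhoods guaranteed by Theorem~\ref{1} — and the mildly technical ingredient is the lattice identity $|a-a'|=(|a|-|c|)^{+}$ for the order-interval truncation, which is routine (one may also replace it by the weaker bound $|a-a'|\le|a|-|a|\wedge|c|$, immediate from $x\wedge y=x-(x-y)^{+}$, which is all that is needed).
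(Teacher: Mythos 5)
Your proposal is correct and takes essentially the same route as the paper: the whole point is that Theorem~\ref{1} makes $u\tau$ a locally solid topology, after which both assertions are the standard facts about locally solid topologies (the paper simply cites Theorem 8.41 and Lemma 8.42 of [\ref{701}]). Your self-contained arguments — the reduction to $x\mapsto x^{+}$ via $|x^{+}-y^{+}|\le|x-y|$ and solidity, and the truncation $a'=\bigl((-|c|)\vee a\bigr)\wedge|c|$ with $|a-a'|=(|a|-|c|)^{+}\le|b-c|$ — are exactly the textbook proofs of those two results, and they are carried out correctly, including the observation that no Hausdorff assumption is needed.
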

\begin{proof}
The result follows immediately from Theorem 8.41 and Lemma 8.42 in [\ref{701}].
\end{proof}
In the next corollary, the Archimedean property is not assumed. Statement (iii) states that, under very mild topological assumptions, it is satisfied automatically. Statements (ii) and (iv) are efficient generalizations of Lemma 1.2 and Proposition 4.8 in [\ref{706}].
\begin{corollary}\label{1.2}
Suppose $\tau$ is both locally solid and Hausdorff then:
\begin{enumerate}
\item The positive cone $X_+$ is $u\tau$-closed;
\item If $x_{\alpha} \uparrow$ and $x_{\alpha}\xrightarrow{u\tau}x$, then $x_{\alpha}\uparrow x$;
\item $X$ is Archimedean;
\item Every band in $X$ is $u\tau$-closed.
\end{enumerate}
\end{corollary}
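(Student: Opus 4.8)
The plan is to derive all four statements from Theorem~\ref{1} together with the fact that $u\tau$ is a Hausdorff locally solid topology (Hausdorffness of $u\tau$ follows from Theorem~\ref{1} since $\tau$ is Hausdorff). The key observation is that $u\tau$ is itself a Hausdorff locally solid topology, so any general fact about such topologies — in particular the classical results in [\ref{701}] about closedness of the positive cone and of bands — applies verbatim to $u\tau$. However, since the Archimedean hypothesis is being dropped here, I must be careful not to invoke results that secretly assume it, and in particular statement (iii) must be proved \emph{before} I am allowed to quote any Archimedean-dependent theorem.

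For (i), I would first establish (ii) directly and then note that $u\tau$-closedness of $X_+$ is essentially a reformulation. The cleanest route: take a net $(x_\alpha)\subseteq X_+$ with $x_\alpha \xrightarrow{u\tau} x$; for each fixed $u\in X_+$ we have $\lvert x_\alpha - x\rvert\wedge u \xrightarrow{\tau}0$, hence $(x_\alpha - x)^-\wedge u = (x - x_\alpha)^+\wedge u \le \lvert x_\alpha-x\rvert\wedge u \xrightarrow{\tau}0$. On the other hand $(x_\alpha-x)^- \le x^-$ pointwise? No — rather, since $x_\alpha\ge 0$, $(x_\alpha-x)^-\le x^-$... this needs the inequality $x_\alpha - x \ge -x$, so $(x_\alpha-x)^- \le x^-$, giving $(x-x_\alpha)^+ \le x^-$, thus $(x-x_\alpha)^+ = (x-x_\alpha)^+\wedge x^- = (x-x_\alpha)^+\wedge(x^-\wedge u)$ once $u\ge x^-$... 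I would instead argue: $(x-x_\alpha)^+\wedge u \xrightarrow{\tau}0$ for all $u$, and $(x-x_\alpha)^+ \le x^-$ is order bounded, so by Proposition~\ref{8} $(x-x_\alpha)^+\xrightarrow{\tau} 0$; but also $(x-x_\alpha)^+ \to (x-x)^+ = x^- $ is wrong since we don't have $\tau$-convergence of $x_\alpha$. The correct and simplest path is: $(x-x_\alpha)^+ \le x^-$ fails unless... let me just do it via $\lvert (x-x_\alpha)^+ - x^-\rvert \le \lvert x-x_\alpha - (-x_\alpha)\rvert$? I will instead prove (i) by the standard argument that in a Hausdorff locally solid space $X_+$ is closed, applied to $u\tau$, once (iii) is available — so the honest ordering is (ii) $\Rightarrow$ (iii) $\Rightarrow$ (i), (iv).

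For (ii), suppose $x_\alpha\uparrow$ and $x_\alpha\xrightarrow{u\tau}x$. Fix $\beta$; for $\alpha\ge\beta$ we have $x_\alpha - x_\beta \ge 0$, and $x_\alpha\xrightarrow{u\tau}x$ gives (lattice operations are $u\tau$-continuous by Corollary~\ref{1.1}) $x_\alpha - x_\beta \xrightarrow{u\tau} x - x_\beta$ along $\alpha\ge\beta$; since $X_+$ is $u\tau$-closed — here I'd need (i) first, so really I prove the closedness of $X_+$ by the direct net computation sketched above, being careful it is Archimedean-free — we get $x-x_\beta\ge 0$, i.e. $x$ is an upper bound for $(x_\alpha)$. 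If $y$ is any other upper bound, then $y - x_\alpha\ge 0$ and $y-x_\alpha\xrightarrow{u\tau}y-x$, so again $y-x\ge 0$; hence $x=\sup x_\alpha$, i.e. $x_\alpha\uparrow x$. So in fact the logical skeleton is: prove $X_+$ is $u\tau$-closed by a direct computation using solidity and Hausdorffness of $u\tau$ (no Archimedean property needed), which gives (i); then (ii) follows as above; then (iii): if $0\le nx\le y$ for all $n$, set $x_\alpha = \tfrac1n y - x$... better, $0 \le x \le \tfrac1n y$ means $x \wedge u \le \tfrac1n(y\wedge nu)$ hmm — the standard trick: from $nx\le y$ we get $x \le \tfrac1n y \xrightarrow{\tau} 0$ (scalar multiplication continuity), hence $\tfrac1n y \xrightarrow{u\tau}0$, and $0\le x \le \tfrac1n y$ with solidity of $u\tau$-neighbourhoods forces $x\in\overline{\{0\}}^{u\tau}=\{0\}$ by Hausdorffness; so $x=0$ and $X$ is Archimedean. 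Finally (iv): a band $B$ equals $B^{dd}$, and disjoint complements are order closed; more efficiently, $B$ is solid, so $\overline{B}^{u\tau}$ is solid by Corollary~\ref{1.1}, and one checks $\overline{B}^{u\tau}$ is still a band — the point being that if $x_\alpha\in B$, $x_\alpha\xrightarrow{u\tau}x$, and $z\perp B$, then $\lvert x\rvert\wedge\lvert z\rvert$ is a $u\tau$-limit of $\lvert x_\alpha\rvert\wedge\lvert z\rvert = 0$, so $\lvert x\rvert\wedge\lvert z\rvert=0$ by Hausdorffness, placing $x\in B^{dd}=B$.

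The main obstacle I anticipate is getting the $u\tau$-closedness of $X_+$ (statement (i)) genuinely without the Archimedean property, since the usual textbook proof in [\ref{701}] lives in the Archimedean setting; I expect this to work by the direct net argument — given $x_\alpha\in X_+$ with $x_\alpha\xrightarrow{u\tau}x$, one shows $x^- \wedge u \xrightarrow{\tau} 0$ for every $u$ using $x^-\wedge u \le \lvert x - x_\alpha\rvert \wedge u + x_\alpha^-\wedge u = \lvert x-x_\alpha\rvert\wedge u$ (as $x_\alpha^-=0$), hence $x^-\wedge u = 0$ for all $u$ by Hausdorffness of $\tau$, forcing $x^- = 0$. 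This computation uses only solidity and Hausdorffness, so the delicate sequencing (i) before (ii), and (iii) quoted only afterwards for subsequent use, is respected. Everything else is a routine application of Corollary~\ref{1.1} and the Hausdorff separation $\overline{\{0\}}^{u\tau}=\{0\}$.
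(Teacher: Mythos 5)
Your proposal is correct, and it is worth comparing with what the paper actually does. The paper's proof is a one-liner: since $u\tau$ is a Hausdorff locally solid topology by Theorem~\ref{1}, all four statements follow by citing Theorem 8.43 of [\ref{701}], which asserts exactly (i)--(iv) for an arbitrary Hausdorff locally solid vector lattice, with no Archimedean hypothesis (indeed (iii) is part of that theorem's content). You start from the same observation but, wary that the quoted result might secretly assume the Archimedean property, you re-prove it from scratch: the solidity-plus-Hausdorff computation $x^{-}\wedge u\le\lvert x-x_{\alpha}\rvert\wedge u$ for (i), the standard upper-bound argument for (ii), the $0\le x\le\tfrac1n y\xrightarrow{\tau}0$ argument for (iii), and preservation of disjointness under $u\tau$-limits for (iv); all of these are sound and genuinely Archimedean-free, so your route buys a self-contained proof at the cost of length, while the paper's buys brevity by outsourcing to the reference. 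Two small points to tighten: first, in (iv) the identification $B=B^{dd}$ does require the Archimedean property, so your sequencing of (iii) before (iv) is not optional and should be said explicitly; second, the exploratory false starts in your treatment of (i) should be deleted, since only the final inequality $x^{-}\le x_{\alpha}^{-}+\lvert x-x_{\alpha}\rvert$ (hence $x^{-}\wedge u\le\lvert x-x_{\alpha}\rvert\wedge u$, forcing $x^{-}\wedge u=0$ and then $x^{-}=0$ with $u=x^{-}$) is the argument you actually use.
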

\begin{proof}
The result follows immediately from Theorem 8.43 in [\ref{701}].
\end{proof}

We next work towards a version of Proposition 3.15 in [\ref{708}] that is applicable to locally solid topologies. The proposition is recalled here along with a definition.
\begin{proposition}\label{2}
Let $X$ be a vector lattice, and $Y$ a sublattice of $X$. Then $Y$ is $uo$-closed in $X$ if and only if it is $o$-closed in $X$.
\end{proposition}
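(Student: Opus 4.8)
The plan is to prove both directions by working with the two notions of order convergence side by side, since a sublattice being $o$-closed or $uo$-closed is a statement about which limits of nets from $Y$ land back in $Y$. First I would record the easy direction: since every order convergent net is $uo$-convergent, a $uo$-closed set is automatically $o$-closed, so the content is the forward implication --- if $Y$ is $o$-closed then $Y$ is $uo$-closed. Actually, wait: one must be slightly careful about which way this trivial inclusion runs. If $Y$ is $uo$-closed, take a net $(y_\alpha)\subseteq Y$ with $y_\alpha \xrightarrow{o} x$; then $y_\alpha \xrightarrow{uo} x$, so $x\in Y$; hence $Y$ is $o$-closed. So indeed the nontrivial direction is: $o$-closed $\Rightarrow$ $uo$-closed.

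For that direction, suppose $Y$ is $o$-closed in $X$ and let $(y_\alpha)\subseteq Y$ satisfy $y_\alpha \xrightarrow{uo} x$ in $X$; I want $x\in Y$. The key idea is that $uo$-convergence of a net from a sublattice $Y$ is detected inside $Y$: I would first reduce to the band generated by $Y$, or better, pass to $Y$ itself. The crucial known fact (from [\ref{708}]) is that $uo$-convergence passes freely between a sublattice and the whole space --- a net in $Y$ is $uo$-null in $Y$ if and only if it is $uo$-null in $X$. Using this, together with the fact that one may replace $x$ by elements of $Y$: since $(y_\alpha)$ is $uo$-Cauchy-like near $x$, for a fixed $\alpha_0$ the net $(y_\alpha \vee y_{\alpha_0})\wedge (\text{something})$ stays in $Y$ and still $uo$-converges, but $x$ itself need not be in $Y$. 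The right move is instead: fix any $a\in Y_+$ and consider $z_\alpha := (y_\alpha \vee (x-a)) \wedge (x+a)$, a truncation of $y_\alpha$ to the order interval $[x-a, x+a]$; these are order bounded, $z_\alpha - y_\alpha \to 0$ in $uo$ hence (being order bounded differences, once we also truncate) we can arrange $z_\alpha \xrightarrow{o} x$; but the $z_\alpha$ need not lie in $Y$ either. So the genuinely careful step is to produce, from the $uo$-null net $|y_\alpha - x|\wedge u \xrightarrow{o} 0$, a net entirely within $Y$ whose order limit is an element we can identify with $x$.

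The cleanest route, which I expect to be the main obstacle to get exactly right, is to use that $uo$-limits are unique and that $Y$ and its $uo$-closure have the same $uo$-convergent nets, then invoke the band-projection-free characterization: $x$ lies in the $o$-closure of $Y$ precisely when it is an $o$-limit of a net from $Y$, and one shows any $uo$-limit $x$ of a net from $Y$ is actually an $o$-limit of a (possibly different) net from $Y$. To do this I would fix $\alpha_0$ and note $|y_\alpha - y_{\alpha_0}|\wedge u \xrightarrow{o} |x - y_{\alpha_0}|\wedge u$ along $\alpha$ for each $u$, so $(y_\alpha)$ is $uo$-Cauchy; since $Y$ is $o$-closed it is in particular order complete enough along these truncations --- no, that is not automatic. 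Instead I would directly use Proposition 3.15's proof strategy from [\ref{708}]: replace $(y_\alpha)$ by the net $w_\alpha = (y_\alpha \wedge u_0)\vee(-u_0)$ for a fixed $u_0 = |y_{\alpha_0} - x|$...

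Honestly the robust plan is this: assume $Y$ is $o$-closed, take $y_\alpha\in Y$ with $y_\alpha\xrightarrow{uo}x$. For each fixed $\beta$, the net $\big(|y_\alpha - y_\beta|\big)_\alpha$ need not be order bounded, but $|y_\alpha - y_\beta|\wedge u \xrightarrow{o} |x-y_\beta|\wedge u$ for all $u\in X_+$, i.e. $y_\alpha \xrightarrow{uo} x$ shows $(y_\alpha)$ is $uo$-Cauchy. Now fix one index $\gamma$ and set $u_0 := |x - y_\gamma|$ (an element of $X_+$, not necessarily in $Y$); then $|y_\alpha - y_\gamma| \wedge u_0 \xrightarrow{o} u_0$. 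Put $p_\alpha := \big((y_\alpha - y_\gamma)\vee(-u_0)\big)\wedge u_0 + y_\gamma$. These $p_\alpha$ lie in $Y$ only if $u_0\in Y$, which we cannot assume --- so this still fails. The actual fix used in [\ref{708}] is to truncate by an element of $Y$: there exists $v\in Y_+$ with... no. I will therefore structure the final proof as: (1) trivial direction as above; (2) for the hard direction, invoke that $Y$ and the ideal $I(Y)$ it generates have the same $uo$-convergence, reduce to assuming $Y$ is an ideal (taking $o$-closure of an ideal is an ideal, and order-closed), then for an ideal $Y$ truncation $z_\alpha := (y_\alpha\vee(x-|x|))\wedge(x+|x|)$... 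The main obstacle, and the step I will spell out with care, is exhibiting an order-convergent net inside $Y$ with limit $x$; I expect to resolve it by the sublattice-$uo$ passage theorem from [\ref{708}] applied to $Y$ sitting inside $\overline{Y}^{\,o}$, after checking $\overline{Y}^{\,o}$ is a sublattice, which follows since $uo$-limits (hence $o$-limits) respect lattice operations.
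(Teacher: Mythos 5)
Your trivial direction is fine, but the hard direction ($o$-closed $\Rightarrow$ $uo$-closed) is never actually proved: each concrete attempt you make (truncating $y_\alpha$ into $[x-a,x+a]$, or shifting by $y_\gamma$ and truncating at $u_0=\lvert x-y_\gamma\rvert$) is abandoned because the resulting net leaves $Y$, and the final ``robust plan'' rests on two unjustified reductions. The claim that $uo$-convergence ``passes freely between a sublattice and the whole space'' is Theorem 3.2 of [\ref{708}] only for \emph{regular} sublattices, and an $o$-closed sublattice need not be regular; likewise nothing allows you to replace $Y$ by the ideal $I(Y)$ it generates, since the conclusion concerns membership in $Y$ rather than in $I(Y)$, and $I(Y)$ need not be $o$-closed. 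So the step you yourself flag as the main obstacle --- producing an order convergent net inside $Y$ whose limit can be identified with $x$ --- is exactly what is missing.

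The device used in the paper (in the proof of Proposition~\ref{4}, which is noted to work verbatim for $o/uo$-convergence) is simpler than any of your truncations and needs no regularity. Since lattice operations are $uo$-continuous, $y_\alpha^{\pm}\xrightarrow{uo}x^{\pm}$, so one may assume $y_\alpha\ge 0$ and $x\ge 0$. For every $z\in X_+$ one has $\lvert y_\alpha\wedge z-x\wedge z\rvert\le\lvert y_\alpha-x\rvert\wedge z\xrightarrow{o}0$, i.e.\ $y_\alpha\wedge z\xrightarrow{o}x\wedge z$. Testing first with $z=y\in Y_+$ and using $o$-closedness of $Y$ gives $x\wedge y\in Y$ for every $y\in Y_+$; testing then with $z=x$ gives $y_\alpha\wedge x\xrightarrow{o}x$, and by the first step (applied to $y=y_\alpha$) the net $(y_\alpha\wedge x)$ lies in $Y$, so $x\in Y$. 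This two-stage use of one inequality --- first to see $x\wedge Y_+\subseteq Y$, then with the limit $x$ itself as the test vector --- is the idea your proposal circles but never lands on; as written, the proof has a genuine gap.
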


\begin{defn}\label{3}
A locally solid topology $\tau$ on a vector lattice is said to be \textbf{\textit{Lebesgue}} (or \textbf{\textit{order continuous}}) if $x_{\alpha} \xrightarrow{o} 0$ implies $x_{\alpha} \xrightarrow{\tau} 0$.
\end{defn}
Note that the Lebesgue property is independent of the definition of order convergence because, as is easily seen, no matter which definition of order convergence is used, it is equivalent to the property that $x_{\alpha}\xrightarrow{\tau}0$ whenever $x_{\alpha}\downarrow 0$.
\begin{proposition}\label{4}
Let $X$ be a vector lattice, $\tau$ a Hausdorff locally solid topology on $X$, and $Y$ a sublattice of $X$. $Y$ is (sequentially) $u\tau$-closed in $X$ if and only if it is (sequentially) $\tau$-closed in $X$.
\end{proposition}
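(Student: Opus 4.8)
The plan is to show the two directions separately, noting first that one inclusion is immediate: since $\tau \subseteq u\tau$ is false in general, but rather $u\tau$ is coarser than $\tau$ (Proposition~\ref{8} says $\tau$-convergence implies $u\tau$-convergence, so $u\tau \subseteq \tau$ as topologies), every $u\tau$-closed set is $\tau$-closed, and likewise for sequential closedness. So the content is the forward direction: if $Y$ is a $\tau$-closed sublattice, then $Y$ is $u\tau$-closed. I would handle the net version and the sequential version in parallel, since the argument is the same modulo replacing ``net'' by ``sequence''.

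For the forward direction, suppose $Y$ is $\tau$-closed and let $(y_\alpha) \subseteq Y$ with $y_\alpha \xrightarrow{u\tau} x$ for some $x \in X$; I want $x \in Y$. The idea is to reduce to $uo$-type reasoning: since $Y$ is a $\tau$-closed sublattice it is in particular $\tau$-closed, hence $o$-closed (an order-convergent net in $Y$ with limit in $X$ converges in $\tau$ by... no — wait, that needs Lebesgue). Let me instead argue directly. Replacing $(y_\alpha)$ by $(y_\alpha \wedge x^+ \vee (-x^-))$ or a similar truncation doesn't obviously stay in $Y$ unless $x \in Y$, so instead I would use the standard trick: it suffices to show $x^+ \in Y$ and $x^- \in Y$ won't quite work either since we don't know $x \in Y$. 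The clean approach: first reduce to the case $x = 0$ is not available (no translation by $x$ inside $Y$).

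Instead, the right move is: consider $z_\alpha := (y_\alpha \vee x) \wedge$ (truncation) — actually the cleanest path, mirroring Proposition~\ref{2}'s proof, is to show that for a $\tau$-closed sublattice $Y$, if $y_\alpha \xrightarrow{u\tau} x$ with $y_\alpha \in Y$, then the net $|y_\alpha - x| \wedge u \xrightarrow{\tau} 0$ for all $u$, and in particular we can pick $u$ cleverly. Take $u = |x|$... no. Here is the argument I would commit to: let $Y$ be $\tau$-closed, $y_\alpha \in Y$, $y_\alpha \xrightarrow{u\tau} x$. Then $y_\alpha \wedge x^+ \in Y$ and $|y_\alpha \wedge x^+ - x^+| = |y_\alpha \wedge x^+ - x \wedge x^+| \le |y_\alpha - x| \wedge |x| \le |y_\alpha - x| \wedge x^+ \to 0$ in $\tau$ — wait $|a\wedge c - b \wedge c| \le |a-b|$, and also $\le |a - b| \wedge$ something; more carefully $|y_\alpha \wedge x^+ - x^+| \le |y_\alpha - x|$ and this quantity is order bounded by $x^+$, so by Proposition~\ref{8} (the order-bounded clause), $|y_\alpha - x| \wedge x^+ \xrightarrow{u\tau} 0$ gives $|y_\alpha - x| \wedge x^+ \xrightarrow{\tau} 0$, hence $|y_\alpha \wedge x^+ - x^+| \xrightarrow{\tau} 0$ since it's dominated. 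So $x^+$ is a $\tau$-limit of the net $(y_\alpha \wedge x^+) \subseteq Y$, whence $x^+ \in Y$. Symmetrically $x^- \in Y$ (using $y_\alpha \vee (-x^-)$), so $x = x^+ - x^- \in Y$.

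The main obstacle I anticipate is being careful about which convergences are order-bounded, so that Proposition~\ref{8}'s ``for order bounded nets the convergences agree'' can legitimately be invoked to upgrade $u\tau$-convergence of the truncated nets to $\tau$-convergence; the estimates $|y_\alpha \wedge x^+ - x^+| \le |y_\alpha - x| \wedge x^+$ must be verified (it follows from the Birkhoff-type inequality $|a \wedge c - b \wedge c| \le |a - b|$ together with $|a\wedge c - b \wedge c| \le |a-b|$ and the fact that $y_\alpha \wedge x^+ - x^+ \in [-x^+, 0]$... actually one should just note $0 \ge y_\alpha\wedge x^+ - x^+ \ge (y_\alpha - x^+)\wedge 0 \ge -(x^+ \vee x^+)$, hmm — I will instead bound $|y_\alpha \wedge x^+ - x^+|$ by $|y_\alpha - x| \wedge (2x^+)$ or simply observe it is order bounded in $[0, 2x^+]$ and $\tau$-null). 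For the sequential statement, since $u\tau$ is a linear (hence first-countable-free but still perfectly fine) topology and the truncations of a sequence are again sequences, the identical argument applies verbatim. I would present the bounded-truncation lemma once and apply it in both the net and sequential settings.
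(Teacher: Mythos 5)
Your backward direction is fine, but the forward direction has a genuine gap at its crucial step: you assert that $y_\alpha \wedge x^+ \in Y$ (and later that $y_\alpha \vee (-x^-) \in Y$). A sublattice is closed only under lattice operations applied to its \emph{own} elements; $x^+$ is an element of $X$ that need not lie in $Y$ --- indeed, whether $x$ (hence $x^\pm$, morally) belongs to $Y$ is exactly what is being proved. Concretely, take $X=\mathbb{R}^2$, $Y$ the diagonal $\{(t,t)\}$, and $x=(1,0)$: then $(t,t)\wedge x^+=(t\wedge 1,\,t\wedge 0)\notin Y$ for $t>0$. So the net $(y_\alpha \wedge x^+)$ need not live in $Y$, and $\tau$-closedness of $Y$ cannot be applied to it; this is not a technicality you can patch with a better inequality. (There is also a smaller slip: since $x\le x^+$ one has $x\wedge x^+=x$, not $x^+$, so your identity $\lvert y_\alpha\wedge x^+-x^+\rvert=\lvert y_\alpha\wedge x^+-x\wedge x^+\rvert$ fails unless $x\ge 0$; that part could be repaired by first reducing to a positive net and positive limit via $u\tau$-continuity of lattice operations, which is legitimate because $y_\alpha^\pm\in Y$.)

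The paper's proof is designed precisely to get around the membership obstruction, via a two-step argument. After reducing to $(y_\alpha)\subseteq Y_+$ and $x\in X_+$, it first fixes an arbitrary $y\in Y_+$ and uses $\lvert y_\alpha\wedge y-x\wedge y\rvert\le \lvert y_\alpha-x\rvert\wedge y\xrightarrow{\tau}0$ together with the fact that $y_\alpha\wedge y\in Y$ (both entries are in $Y$) and $\tau$-closedness to conclude $x\wedge y\in Y$ for \emph{every} $y\in Y_+$. Only then, applying this with $y=y_\alpha$, does one know that the truncated net $(y_\alpha\wedge x)$ lies in $Y$; taking $z=x$ in the same estimate gives $y_\alpha\wedge x\xrightarrow{\tau}x$, whence $x\in\overline{Y}^{\tau}=Y$. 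Your truncation idea becomes correct only after this intermediate step supplies the membership you assumed at the outset, so as written the proof does not go through.
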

\begin{proof}
If $Y$ is $u\tau$-closed in $X$ it is clearly $\tau$-closed in $X$. Suppose now that $Y$ is $\tau$-closed in $X$ and let $(y_{\alpha})$ be a net in $Y$ that $u\tau$-converges in $X$ to some $x \in X$. Since lattice operations are $u\tau$-continuous we have that $y_{\alpha}^{\pm}\xrightarrow{u\tau}x^{\pm}$ in $X$. Thus, WLOG, we may assume that $(y_{\alpha})\subseteq Y_+$ and $x \in X_+$. Observe that for every $z\in X_+$,
\begin{equation}\label{7678}
\lvert y_{\alpha}\wedge z-x \wedge z\rvert \leq \lvert y_{\alpha}-x\rvert \wedge z \xrightarrow{\tau} 0.
\end{equation}
In particular, for any $y \in Y_+$, $y_{\alpha}\wedge y \xrightarrow{\tau} x\wedge y$. Since $Y$ is $\tau$-closed, $x \wedge y\in Y$ for any $y \in Y_+$. 

On the other hand, taking $z=x$ in (\ref{7678}) we get that $y_{\alpha}\wedge x\xrightarrow{\tau} x$. Since we have just shown that $y_{\alpha}\wedge x\in Y$, it follows that $x\in \overline{Y}^\tau=Y$. The same proof works for sequences and for $o/uo$-convergence.

\end{proof}

Proposition~\ref{4} will be used to prove a much deeper statement: see Theorem~\ref{26}. 


To establish the ease with which results transfer from $un$ to $u\tau$ we next proceed to generalize many results from the aforementioned papers on $un$-convergence. Later in the paper we will justify the need to look at unbounded topologies other than $un$, and how working only with Banach lattices can be too restrictive.
\begin{defn}\label{5}
A locally solid topology $\tau$ on a vector lattice is said to be \textbf{\textit{$uo$-Lebesgue}} (or \textbf{\textit{unbounded order continuous}}) if $x_{\alpha} \xrightarrow{uo} 0$ implies $x_{\alpha} \xrightarrow{\tau} 0$.
\end{defn}

It is clear that the $uo$-Lebesgue property implies the Lebesgue property but not conversely:

\begin{example}\label{6}
The norm topology of $c_0$ is order continuous but not unbounded order continuous.
\end{example}
\begin{proposition}\label{7}
If $\tau$ is Lebesgue then $u\tau$ is $uo$-Lebesgue. In particular, $u\tau$ is Lebesgue.
\end{proposition}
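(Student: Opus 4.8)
The plan is to unwind the definitions and reduce everything to the Lebesgue property of $\tau$ itself. Suppose $x_\alpha \xrightarrow{uo} 0$ in $X$; I want to show $x_\alpha \xrightarrow{u\tau} 0$, i.e. $\lvert x_\alpha\rvert \wedge u \xrightarrow{\tau} 0$ for each fixed $u \in X_+$. By definition of $uo$-convergence, $\lvert x_\alpha\rvert \wedge u \xrightarrow{o} 0$ for every $u \in X_+$. Now the key observation is that the net $(\lvert x_\alpha\rvert \wedge u)$ is order bounded (it lies in $[0,u]$), so an order-convergent net of this form is in particular a net converging in order to $0$; since $\tau$ is Lebesgue, $\lvert x_\alpha\rvert \wedge u \xrightarrow{\tau} 0$. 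As $u \in X_+$ was arbitrary, this is exactly $x_\alpha \xrightarrow{u\tau} 0$, so $u\tau$ is $uo$-Lebesgue.

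For the ``in particular'' clause: $u\tau$ is a locally solid topology by Theorem~\ref{1}, and the $uo$-Lebesgue property implies the Lebesgue property (as noted in the text just before Example~\ref{6}, since every order-null net is $uo$-null). Hence $u\tau$ is Lebesgue. Alternatively, this also follows directly: if $x_\alpha \xrightarrow{o} 0$ then $x_\alpha \xrightarrow{\tau} 0$ since $\tau$ is Lebesgue, hence $x_\alpha \xrightarrow{u\tau} 0$ by Proposition~\ref{8}.

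There is essentially no obstacle here; the only point requiring a moment's care is the compatibility of the definition of order convergence used (the two-index-set version from the Preliminaries versus the classical $o_1$ version). But as the paper already remarks, the Lebesgue property is insensitive to this choice because it is equivalent to the statement that $x_\alpha \xrightarrow{\tau} 0$ whenever $x_\alpha \downarrow 0$, and the argument above only uses order boundedness of $(\lvert x_\alpha\rvert \wedge u)$ together with $\lvert x_\alpha\rvert \wedge u \xrightarrow{o} 0$, which feeds into the Lebesgue hypothesis regardless of the convention. So the proof is a short three-line chain of implications.
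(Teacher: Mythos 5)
Your argument is correct and is essentially the paper's own proof: unwind $uo$-convergence to get $\lvert x_\alpha - x\rvert \wedge u \xrightarrow{o} 0$ for each $u \in X_+$, apply the Lebesgue property of $\tau$ to conclude $\lvert x_\alpha - x\rvert \wedge u \xrightarrow{\tau} 0$, and read off $u\tau$-convergence; the ``in particular'' clause follows, as in the paper, from the fact that order convergence implies $uo$-convergence. (The remark about order boundedness of $(\lvert x_\alpha\rvert \wedge u)$ is harmless but unnecessary, since the Lebesgue property applies to any order-null net.)
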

\begin{proof}
Suppose $x_{\alpha} \xrightarrow{uo} x$, i.e., $\forall u \in X_+$, $\lvert x_{\alpha}-x\rvert \wedge u \xrightarrow{o} 0$. The Lebesgue property implies that $\lvert x_{\alpha}-x\rvert \wedge u \xrightarrow{\tau} 0$ so that $x_{\alpha} \xrightarrow{u\tau}x$. 
\end{proof}

There is much more to say about the $uo$-Lebesgue property and, in fact, a whole section on it. We continue now with more easy observations.
\\

Next we present Lemmas 2.1 and 2.2 of [\ref{706}] which carry over with minor modification. The proofs are similar and, therefore, omitted.
\begin{lemma}\label{9}
Let $X$ be a vector lattice, $u \in X_+$ and $U$ a solid subset of $X$. Then $U_u:=\{x\in X:\lvert x \rvert \wedge u \in U\}$ is either contained in $[-u,u]$ or contains a non-trivial ideal. If $U$ is, further, absorbing, and $U_u$ is contained in $[-u,u]$, then $u$ is a strong unit.
\end{lemma}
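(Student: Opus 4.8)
The plan is to analyze the set $U_u$ directly via a dichotomy on whether it is ``small'' (trapped inside the order interval $[-u,u]$) or ``large'' (catching an entire ideal). Fix the solid set $U$ and the element $u \in X_+$. Suppose first that $U_u \not\subseteq [-u,u]$; I want to produce a non-trivial ideal inside $U_u$. Since $U_u$ fails to be contained in $[-u,u]$, there is some $x \in U_u$ with $|x| \not\le u$, equivalently $|x| \wedge u < |x|$, equivalently the element $w := (|x| - u)^+ = |x| - |x|\wedge u$ is nonzero. The key observation is that for such an $x$ we have $|x| \wedge u = u \wedge |x|$, and I claim $u$ itself, or rather the component of $|x|$ ``beyond'' $u$, forces a whole ideal into $U_u$. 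More precisely, I would argue that the principal ideal $I_w$ generated by $w = (|x|-u)^+$ is contained in $U_u$: if $|y| \le n w$ for some $n$, then $|y| \wedge u \le n w \wedge u$; but $w \wedge u = (|x|-u)^+ \wedge u$, and one checks $(|x|-u)^+ \wedge u \le |x| \wedge u \in U$... this needs care, so let me restructure.

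The cleaner route: if $x \in U_u$ and $|x| \not\le u$, set $v := |x| \vee u$ and note $v > u$, with $v \wedge u = u$ and more importantly $|x| \wedge u = |x| \wedge u$. Actually the right element to look at is $e := |x| - |x| \wedge u \ge 0$, which is nonzero by assumption. I claim $I_e \subseteq U_u$, where $I_e = \{y : |y| \le \lambda e \text{ for some } \lambda > 0\}$. To see this, it suffices (since $U_u$ is solid, being $U_u = (U)_u$ with $U$ solid — this is part of the reasoning in Theorem~\ref{1}) to show $\lambda e \in U_u$ for all $\lambda > 0$, i.e. $(\lambda e) \wedge u \in U$. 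Now $e \wedge u = (|x| - |x|\wedge u) \wedge u$. Using the identity $a - a\wedge b$ is disjoint from... no: $e = (|x| - u)^+$ when... hmm, $|x| - |x|\wedge u = (|x| - u)^+$ always. And $(|x|-u)^+ \wedge u$: by the Birkhoff-type inequality, $(|x|-u)^+ \wedge u \le$ ? This is where I expect to need the precise identity, and I suspect the intended argument instead shows $U_u$ contains the ideal generated by $u$ (or the whole ideal on which $|x|$ ``escapes''), using solidity of $U$ together with the fact that $|x|\wedge u \in U$ and multiples: since $U$ is solid and $|x| \wedge u \in U$, but $U$ need not be absorbing in the first statement, we cannot inflate. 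So the escaping must be genuinely ``sideways,'' on a band where $u$ is small. Concretely: on the carrier where $e = (|x|-u)^+ > 0$, we have $u \le |x|$ pointwise-in-spirit, so there $|x| \wedge u = u$, and scaling $u$ up stays $\le$ a scaling of $e$ plus stays inside... I will need to nail this with the band-projection-free lattice identities, and that is the one genuinely fiddly computation.

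For the second part, assume additionally that $U$ is absorbing and that $U_u \subseteq [-u,u]$; I must show $u$ is a strong unit, i.e. the principal ideal $I_u$ is all of $X$. Take any $z \in X_+$. Since $U$ is absorbing, there is $\lambda > 0$ with $\lambda (z \wedge u)\in$... rather, I apply absorbency to the element $z \wedge$ (something). The clean move: for arbitrary $z \ge 0$ and any scalar $t > 0$, consider $tz$; I want to land $tz$, or a controlled multiple, inside $U_u$. Since $U$ absorbs the point $z \wedge u$? That's the wrong target. Instead: note $U_u \subseteq [-u,u]$ means: $|x|\wedge u \in U \implies |x| \le u$. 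Contrapositively, anything not dominated by $u$ has its truncation outside $U$. Now fix $z \in X_+$ and apply absorbency of $U$ to get $n$ with $\frac1n z \in U$... but we need $\frac1n z$ of the form $|x|\wedge u$. Take $x$ with $|x|$ huge: for $m \in \mathbb N$, $|x| := m z$ has $|x| \wedge u \le u$, and as $m \to \infty$... Here's the argument I'd commit to: since $U$ absorbs $z$, pick $\lambda>0$ with $\lambda z \in U$; then $\lambda z \wedge u \le \lambda z \in U$ and $U$ solid gives $\lambda z \wedge u \in U$, so $\lambda z \in U_u$ (as $|\lambda z| \wedge u = \lambda z \wedge u \in U$), hence by hypothesis $\lambda z \le u$, i.e. $z \le \frac1\lambda u$. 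Since $z \in X_+$ was arbitrary, every positive element is dominated by a multiple of $u$, so $u$ is a strong unit.

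The main obstacle is the first claim's lattice computation — exhibiting an explicit non-trivial ideal inside $U_u$ when $U_u \not\subseteq [-u,u]$, for which I expect to use: pick $x \in U_u \setminus [-u,u]$, let $e = (|x|-u)^+ \neq 0$, observe $e \wedge u$ plus solidity of $U$ and the fact that for any $\lambda \ge 1$, $(\lambda e)\wedge u \le \lambda(e \wedge u) \le \lambda(|x|\wedge u)$ — and then crucially that $(\lambda e)\wedge u$ is actually already $\le |x|\wedge u$ because the "mass" of $e$ lies disjointly-ish above $u$; I will verify $(\lambda e)\wedge u \le |x| \wedge u$ for all $\lambda > 0$ using $e \perp (|x|\wedge u - \text{something})$, concluding $(\lambda e) \wedge u \in U$, hence $I_e \subseteq U_u$. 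Everything else is bookkeeping.
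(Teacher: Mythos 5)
Your second half is complete and correct, and your overall skeleton for the first half is exactly the standard one (the paper itself omits the proof, deferring to Lemmas 2.1 and 2.2 of the Kandi\'c--Marabeh--Troitsky paper, whose argument is the same: take $x\in U_u\setminus[-u,u]$, set $e=(\lvert x\rvert-u)^+=\lvert x\rvert-\lvert x\rvert\wedge u\neq 0$, and show the principal ideal $I_e$ sits inside $U_u$). But there is a genuine gap where you say the ``one genuinely fiddly computation'' remains to be nailed: the whole lemma hinges on the inequality $(\lambda e)\wedge u\le \lvert x\rvert\wedge u$ for all $\lambda>0$, and you never establish it -- you only gesture at ``pointwise-in-spirit'' reasoning and an unspecified disjointness $e\perp(\lvert x\rvert\wedge u-\text{something})$. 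Without that inequality you cannot conclude $(\lambda e)\wedge u\in U$ from $\lvert x\rvert\wedge u\in U$ and solidity, so $I_e\subseteq U_u$ is unproved. (Your side-suggestion that $U_u$ might instead contain the ideal generated by $u$ is a dead end: that would force $u=u\wedge u\in U$, which is not given.)

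The missing step is short, and it is worth recording the band-projection-free argument you were looking for: since $(\lvert x\rvert-u)^+\wedge(u-\lvert x\rvert)^+=0$, also $(\lambda e)\wedge(u-\lvert x\rvert)^+=0$ for every $\lambda>0$; writing $u=u\wedge\lvert x\rvert+(u-\lvert x\rvert)^+$ and using the Riesz inequality $a\wedge(b+c)\le a\wedge b+a\wedge c$ for positive $a,b,c$, one gets $(\lambda e)\wedge u\le(\lambda e)\wedge\bigl(u\wedge\lvert x\rvert\bigr)+(\lambda e)\wedge(u-\lvert x\rvert)^+\le\lvert x\rvert\wedge u$. Then for any $y\in I_e$, say $\lvert y\rvert\le\lambda e$, solidity of $U$ gives $\lvert y\rvert\wedge u\le\lvert x\rvert\wedge u\in U$, hence $y\in U_u$, so $U_u$ contains the non-trivial ideal $I_e$. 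With this inserted, your proof is correct and coincides with the intended one; your absorbing/strong-unit argument ($\lambda z\in U\Rightarrow\lambda z\in U_u\subseteq[-u,u]\Rightarrow z\le\tfrac1\lambda u$) needs no changes.
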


Next we present a trivialized version of Thereom 2.3 in [\ref{706}].
\begin{proposition}\label{11}
Let $(X,\tau)$ be a locally solid vector lattice and suppose that $\tau$ has a neighbourhood $U$ of zero containing no non-trivial ideal. If there is a $u\tau$-neighbourhood contained in $U$ then $X$ has a strong unit.
\end{proposition}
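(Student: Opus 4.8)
The plan is to combine Proposition~\ref{11}'s hypothesis with Lemma~\ref{9}, which was set up for exactly this purpose. We are told that $\tau$ has a neighbourhood $U$ of zero that contains no non-trivial ideal, and that some $u\tau$-neighbourhood of zero sits inside $U$. By definition of the $u\tau$-topology (Theorem~\ref{1}), a base at zero for $u\tau$ consists of sets of the form $U_{i,u} = \{x : \lvert x\rvert\wedge u \in U_i\}$ where $\{U_i\}$ is a base of solid $\tau$-neighbourhoods at zero and $u$ ranges over $X_+$. So first I would fix a basic $u\tau$-neighbourhood $V = U_{i,u}$ with $V \subseteq U$, for some solid $\tau$-neighbourhood $U_i$ and some $u \in X_+$.

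Next I would apply Lemma~\ref{9} to the solid set $U_i$ and the vector $u$: the set $(U_i)_u = \{x : \lvert x\rvert\wedge u \in U_i\}$, which is precisely our $V = U_{i,u}$, is either contained in the order interval $[-u,u]$ or contains a non-trivial ideal. The second alternative is impossible here: since $V \subseteq U$ and $U$ contains no non-trivial ideal, $V$ cannot contain a non-trivial ideal either. Hence $V \subseteq [-u,u]$.

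Finally, since $U_i$ is a $\tau$-neighbourhood of zero it is absorbing, so $V = (U_i)_u$ is an absorbing solid set contained in $[-u,u]$; the last sentence of Lemma~\ref{9} then immediately yields that $u$ is a strong unit of $X$. I do not expect any real obstacle here — the content has been front-loaded into Lemma~\ref{9}, and the only point requiring a moment's care is confirming that a basic $u\tau$-neighbourhood inside $U$ really is of the special form $U_{i,u}$ (rather than merely containing such a set), which one gets by shrinking to a basic neighbourhood; and that the absorbing property passes from $U_i$ to $U_{i,u}$, which was already observed in the proof of Theorem~\ref{1} via $U_i \subseteq U_{i,u}$.
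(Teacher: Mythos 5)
Your proposal is correct and follows exactly the paper's route: pick a basic $u\tau$-neighbourhood $U_{i,u}\subseteq U$, observe it can contain no non-trivial ideal, and invoke Lemma~\ref{9} (with $U_i$ absorbing) to conclude $u$ is a strong unit. The paper's proof is just a terser version of the same argument.
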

\begin{proof}
Let $\{U_i\}$ be a solid base at zero for $\tau$ and suppose there exists $i$ and $u>0$ s.t. $U_{i,u} \subseteq U$. We conclude that $U_{i,u}$ contains no non-trivial ideal and, therefore, $u$ is a strong unit.
\end{proof}
This allows us to prove that $u\tau$-neighbourhoods are generally quite large.
\begin{corollary}
If $\tau$ is unbounded and $X$ does not admit a strong unit then every neighbourhood of zero for $\tau$ contains a non-trivial ideal.
\end{corollary}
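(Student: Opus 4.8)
The plan is to apply Proposition~\ref{11} in its contrapositive form. Suppose, for contradiction, that there is some neighbourhood $U$ of zero for $\tau$ that contains no non-trivial ideal. Since $\tau$ is unbounded, we have $\tau = u\tau$, so $U$ is itself a $u\tau$-neighbourhood of zero. In particular, $U$ is a $u\tau$-neighbourhood contained in $U$, so Proposition~\ref{11} applies and yields that $X$ has a strong unit, contradicting our hypothesis.

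The only point requiring a word of care is the hypothesis of Proposition~\ref{11}, which asks for a $u\tau$-neighbourhood \emph{contained in} $U$; here the obvious candidate is $U$ itself, which works precisely because $\tau = u\tau$ means the $\tau$-neighbourhoods of zero and the $u\tau$-neighbourhoods of zero coincide. So the step most worth spelling out is simply the observation that ``$\tau$ unbounded'' lets us view $U$ simultaneously as a $\tau$-neighbourhood (so that it contains no non-trivial ideal, by assumption) and as a $u\tau$-neighbourhood (so that the proposition is applicable). There is no real obstacle here: the corollary is a direct logical restatement of Proposition~\ref{11} under the extra assumption $\tau = u\tau$, and the proof is a single line.

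\begin{proof}
Suppose, for contradiction, that $\tau$ has a neighbourhood $U$ of zero containing no non-trivial ideal. Since $\tau$ is unbounded, $\tau = u\tau$, so $U$ is also a $u\tau$-neighbourhood of zero, and trivially $U \subseteq U$. By Proposition~\ref{11}, $X$ has a strong unit, contradicting our assumption.
\end{proof}
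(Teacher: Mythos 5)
Your proof is correct and is exactly the intended argument: the corollary is the contrapositive of Proposition~\ref{11} once $\tau$ unbounded (i.e.\ $\tau=u\tau$) lets the neighbourhood $U$ serve as its own $u\tau$-neighbourhood; the paper leaves it without proof for precisely this reason.
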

\begin{defn}
A subset $A$ of a locally solid vector lattice $(X,\tau)$ is \textbf{\textit{$\tau$-almost order bounded}} if for every solid $\tau$-neighbourhood $U$ of zero there exists $u \in X_+$ with $A \subseteq [-u,u]+U$.
\end{defn}
It is easily seen that for solid $U$, $x \in [-u,u]+U$ is equivalent to $\left(\lvert x\rvert-u\right)^+\in U$. The proof is the same as the norm case. This leads to a generalization of Lemma 2.9 in [\ref{704}]; the proof is left to the reader.
\begin{proposition}\label{123456789}
If $x_{\alpha}\xrightarrow{u\tau}x$ and $(x_{\alpha})$ is $\tau$-almost order bounded then $x_{\alpha}\xrightarrow{\tau}x$.
\end{proposition}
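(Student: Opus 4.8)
The plan is to reduce to the case $x = 0$ and then split $\lvert y_\alpha\rvert$ into its ``bounded part'' $\lvert y_\alpha\rvert \wedge u$ and its ``tail'' $\left(\lvert y_\alpha\rvert - u\right)^+$. First I would set $y_\alpha := x_\alpha - x$. Since a single point is order bounded, the shifted net $(y_\alpha)$ is again $\tau$-almost order bounded: if $x_\alpha \in [-u,u] + U$ for all $\alpha$ and some solid $\tau$-neighbourhood $U$, then $y_\alpha \in [-(u+\lvert x\rvert),u+\lvert x\rvert] + U$, and $[-(u+\lvert x\rvert),u+\lvert x\rvert]$ is again an order interval. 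Of course $y_\alpha \xrightarrow{u\tau} 0$. So it suffices to prove: if $y_\alpha \xrightarrow{u\tau} 0$ and $(y_\alpha)$ is $\tau$-almost order bounded, then $y_\alpha \xrightarrow{\tau} 0$.

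Since $\tau$ is locally solid, it is enough to check that every solid $\tau$-neighbourhood $W$ of zero eventually contains $y_\alpha$. Fix such a $W$ and pick a solid $\tau$-neighbourhood $V$ with $V + V \subseteq W$. Applying $\tau$-almost order boundedness to $V$, there is $u \in X_+$ with $y_\alpha \in [-u,u] + V$ for every $\alpha$; by the remark preceding the proposition this is the same as $\left(\lvert y_\alpha\rvert - u\right)^+ \in V$ for every $\alpha$. On the other hand, $y_\alpha \xrightarrow{u\tau} 0$ gives $\lvert y_\alpha\rvert \wedge u \xrightarrow{\tau} 0$, so there is $\alpha_0$ with $\lvert y_\alpha\rvert \wedge u \in V$ for all $\alpha \geq \alpha_0$.

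Now I would invoke the identity $\lvert y_\alpha\rvert = \lvert y_\alpha\rvert \wedge u + \left(\lvert y_\alpha\rvert - u\right)^+$, which holds in any vector lattice. For $\alpha \geq \alpha_0$ this exhibits $\lvert y_\alpha\rvert$ as a sum of an element of $V$ and an element of $V$, hence $\lvert y_\alpha\rvert \in V + V \subseteq W$; since $W$ is solid this forces $y_\alpha \in W$. As $W$ was an arbitrary solid $\tau$-neighbourhood of zero, $y_\alpha \xrightarrow{\tau} 0$, and unwinding the reduction gives $x_\alpha \xrightarrow{\tau} x$.

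I do not expect a serious obstacle here; the only points requiring care are the reduction to $x = 0$ (verifying that $\tau$-almost order boundedness is preserved under a single translation) and the decision to use the exact decomposition $\lvert y_\alpha\rvert = \lvert y_\alpha\rvert \wedge u + \left(\lvert y_\alpha\rvert - u\right)^+$ rather than attempting to estimate $\lvert y_\alpha\rvert$ by hand. The remainder is the standard two-neighbourhood splitting argument, the only genuinely ``unbounded'' ingredient being that $\lvert y_\alpha\rvert \wedge u \xrightarrow{\tau} 0$ for the specific $u$ produced by almost order boundedness.
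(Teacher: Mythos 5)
Your argument is correct, and it is exactly the argument the paper has in mind: the proof is left to the reader with the remark that it is ``the same as the norm case'' (Lemma 2.9 of the Deng--O'Brien--Troitsky paper), namely the reduction to $x=0$, the equivalence $y\in[-u,u]+V \Leftrightarrow (\lvert y\rvert-u)^+\in V$ for solid $V$, and the splitting $\lvert y_\alpha\rvert=\lvert y_\alpha\rvert\wedge u+(\lvert y_\alpha\rvert-u)^+$ with a $V+V\subseteq W$ neighbourhood argument. No gaps.
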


In a similar vein, the following can easily be proved; just follow the proof of Proposition 3.7 in [\ref{716}] or notice it is an immediate corollary of Proposition~\ref{123456789}.
\begin{proposition}\label{45623}
Let $(X,\tau)$ be a locally solid vector lattice with the Lebesgue property. If $(x_{\alpha})$ is $\tau$-almost order bounded and $uo$-converges to $x$, then $(x_{\alpha})$ $\tau$-converges to $x$.
\end{proposition}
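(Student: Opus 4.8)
The plan is to reduce Proposition~\ref{45623} to Proposition~\ref{123456789} by showing that, under the Lebesgue property, a $\tau$-almost order bounded net that $uo$-converges to $x$ must actually $u\tau$-converge to $x$. Since by Proposition~\ref{123456789} a $\tau$-almost order bounded net that $u\tau$-converges to $x$ already $\tau$-converges to $x$, this will finish the proof immediately. So the whole content is the implication $x_\alpha \xrightarrow{uo} x \Longrightarrow x_\alpha \xrightarrow{u\tau} x$ for such nets—but in fact this implication requires no almost order boundedness at all: it is exactly Proposition~\ref{7}, which says that if $\tau$ is Lebesgue then $u\tau$ is $uo$-Lebesgue, i.e. $uo$-convergence implies $u\tau$-convergence.

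Concretely, first I would invoke Proposition~\ref{7}: since $(X,\tau)$ has the Lebesgue property, $u\tau$ is $uo$-Lebesgue, so from $x_\alpha \xrightarrow{uo} x$ we get $x_\alpha \xrightarrow{u\tau} x$. Then I would apply Proposition~\ref{123456789} to the net $(x_\alpha)$, which is assumed $\tau$-almost order bounded and which we have just shown is $u\tau$-convergent to $x$; the conclusion is $x_\alpha \xrightarrow{\tau} x$, as desired. That is the entire argument, and it is why the statement is flagged as "easily proved" and as "an immediate corollary of Proposition~\ref{123456789}."

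Alternatively, to mirror the proof of Proposition~3.7 in [\ref{716}] directly rather than going through Proposition~\ref{123456789}, one would fix a solid $\tau$-neighbourhood $U$ of zero, choose a solid neighbourhood $V$ with $V+V+V\subseteq U$, pick $u\in X_+$ with $\bigl(\lvert x_\alpha - x\rvert - u\bigr)^+ \in V$ eventually (using almost order boundedness, after first replacing $u$ by $u + \lvert x\rvert$ or absorbing $x$ suitably so the difference net is almost order bounded), and then write $\lvert x_\alpha - x\rvert \le \bigl(\lvert x_\alpha - x\rvert - u\bigr)^+ + \lvert x_\alpha - x\rvert \wedge u$. The first summand lies in $V$ eventually; the second tends to $0$ in order by the $uo$-hypothesis, hence tends to $0$ in $\tau$ by the Lebesgue property, so it lies in $V$ eventually; combining, $\lvert x_\alpha - x\rvert$ lies in $V+V\subseteq U$ eventually, giving $x_\alpha \xrightarrow{\tau} x$.

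There is essentially no obstacle here: the only mild point of care is that $\tau$-almost order boundedness is a property of the net $(x_\alpha)$ while the relevant estimates involve the difference net $(x_\alpha - x)$, but since $x$ is a fixed vector this is a harmless shift (enlarge the dominating element $u$ by $\lvert x\rvert$), exactly as in the norm case. Given the machinery already in place—Proposition~\ref{7} for $uo\Rightarrow u\tau$ under Lebesgue, and Proposition~\ref{123456789} for the almost-order-bounded upgrade from $u\tau$ to $\tau$—the proof is a one-line composition, which is why I would simply present it as such.
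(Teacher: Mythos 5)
Your proposal is correct and matches the paper's intended argument: the paper itself proves this by noting it is an immediate corollary of Proposition~\ref{123456789} (via Proposition~\ref{7}, since the Lebesgue property makes $u\tau$ $uo$-Lebesgue), or alternatively by following the proof of Proposition 3.7 in [\ref{716}], exactly as you do. Both of your routes, including the careful remark about shifting the dominating element by $\lvert x\rvert$, are sound.
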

One direction of [\ref{704}] Theorem 4.4 can also be generalized. The proof is, again, easy and left to the reader.
\begin{proposition}
Let $(x_n)$ be a sequence in $(X,\tau)$ and assume $\tau$ is Lebesgue. If every subsequence of $(x_n)$ has a further subsequence which is $uo$-null then $(x_n)$ is $u\tau$-null.
\end{proposition}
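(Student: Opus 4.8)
The plan is to reduce the statement to the elementary ``subsubsequence principle'' for topological spaces, using the crucial fact from Theorem~\ref{1} that $u\tau$ is an honest (linear) topology.

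First I would record the only nontrivial input, which is already available: by Proposition~\ref{7}, since $\tau$ is Lebesgue, the topology $u\tau$ is $uo$-Lebesgue, so every $uo$-null net — in particular every $uo$-null sequence — is $u\tau$-null. Feeding this into the hypothesis, we obtain that \emph{every} subsequence of $(x_n)$ admits a further subsequence which is $u\tau$-null.

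Next I would invoke the standard fact that in any topological space, if every subsequence of a sequence $(x_n)$ has a further subsequence converging to a point $x$, then $(x_n)$ itself converges to $x$. This is a one-line argument by contradiction: if $(x_n)$ did not converge to $x$, there would be a neighbourhood $U$ of $x$ avoided by infinitely many terms $x_n$; the subsequence consisting of those terms lies entirely outside $U$, and hence so does each of its further subsequences, none of which can then converge to $x$ — contradicting the hypothesis. Applying this in the topological space $(X,u\tau)$ gives $x_n\xrightarrow{u\tau}0$, as desired.

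There is essentially no obstacle here; the only point requiring care is that the subsubsequence criterion needs a genuine topology, and this is exactly what Theorem~\ref{1} supplies for $u\tau$ — note that the same argument applied naively to $uo$-convergence would be invalid, since $uo$-convergence need not be topological. If one preferred to avoid appeal to abstract topology, the identical contradiction argument could be run directly against the solid neighbourhood base $\{U_{i,u}\}$ constructed in the proof of Theorem~\ref{1}, but the topological phrasing is cleaner.
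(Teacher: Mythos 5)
Your proof is correct and is exactly the argument the paper has in mind: the paper leaves this proof to the reader, and the intended route is precisely yours — combine Proposition~\ref{7} (Lebesgue implies $u\tau$ is $uo$-Lebesgue, so $uo$-null subsequences are $u\tau$-null) with the subsequence--subsubsequence principle, which is legitimate here because Theorem~\ref{1} guarantees $u\tau$ is a genuine topology. Your remark that this device would be invalid for $uo$-convergence itself, since that convergence need not be topological, is also the right point of caution.
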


Recall that a net $(x_{\alpha})$ in a vector lattice $X$ is \textbf{\textit{$uo$-Cauchy}} if the net $(x_{\alpha}-x_{\alpha'})_{(\alpha,\alpha')}$ $uo$-converges to zero. $X$ is \textbf{\textit{$uo$-complete}} if every $uo$-Cauchy net is $uo$-convergent. A study of $uo$-complete spaces was undertaken in [\ref{712}]. A weaker property involving norm boundedness was introduced in [\ref{708}]. Here is a generalization of both definitions to locally solid vector lattices. 
\begin{defn}
A locally solid vector lattice $(X,\tau)$ is \textbf{\textit{boundedly $uo$-complete}} (respectively, \textbf{\textit{sequentially boundedly $uo$-complete}}) if every $\tau$-bounded $uo$-Cauchy net (respectively, sequence) is $uo$-convergent.
\end{defn}
\begin{proposition}
Let $(X,\tau)$ be a locally solid vector lattice. If $(X,\tau)$ is boundedly $uo$-complete then it is order complete. If $(X,\tau)$ is sequentially boundedly $uo$-complete then it is $\sigma$-order complete.
\end{proposition}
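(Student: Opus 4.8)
The plan is to deduce (sequential) order completeness by applying the hypothesis to increasing order bounded nets (resp. sequences). Recall that $X$ is order complete exactly when every net $0\le x_\alpha\uparrow$ that is bounded above admits a supremum: given a nonempty $A\subseteq X$ bounded above, one applies this to the net of finite suprema of $A$, shifted to start above a fixed element of $A$. So I would fix a net with $0\le x_\alpha\uparrow\le u$ (in the $\sigma$-case, a sequence) and try to feed it into the definition of (sequential) bounded $uo$-completeness, which requires checking two things: that it is $\tau$-bounded, and that it is $uo$-Cauchy.

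The first is immediate, since order intervals are $\tau$-bounded in any locally solid vector lattice (see [\ref{701}]). The second is the only point needing thought, and it is where the main obstacle lies: $X$ need not contain $\sup_\alpha x_\alpha$, so the naive dominating net $u-x_\gamma$ need not decrease to $0$. The fix is to pass to the Dedekind completion $X^\delta$. There $x:=\sup_\alpha x_\alpha$ exists, so $x_\alpha\uparrow x$ and $x-x_\alpha\downarrow 0$; and for any $\gamma$ and any $\alpha,\alpha'\ge\gamma$ one has $x_\gamma\le x_\alpha\wedge x_{\alpha'}$ and $x_\alpha\vee x_{\alpha'}\le x$, hence $\lvert x_\alpha-x_{\alpha'}\rvert\le x-x_\gamma$. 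Thus the difference net $(x_\alpha-x_{\alpha'})_{(\alpha,\alpha')}$ is order null, a fortiori $uo$-null, in $X^\delta$; since $uo$-nullity of a net of elements of $X$ is unaffected by passing between $X$ and its Dedekind completion (a standard fact about $uo$-convergence; see [\ref{708}], [\ref{716}]), this net is $uo$-null in $X$, i.e.\ $(x_\alpha)$ is $uo$-Cauchy. (Alternatively, one could simply quote ``order bounded monotone nets are $uo$-Cauchy'' as known, e.g.\ from [\ref{712}].)

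Granting this, bounded $uo$-completeness gives $y\in X$ with $x_\alpha\xrightarrow{uo}y$, and it remains to identify $y$ with $\sup_\alpha x_\alpha$. I would argue in the usual way: for fixed $\beta$ the tail $(x_\alpha)_{\alpha\ge\beta}$ still $uo$-converges to $y$, so $(x_\alpha-x_\beta)_{\alpha\ge\beta}\xrightarrow{uo}y-x_\beta$; since these terms are positive and the positive cone of an Archimedean vector lattice is $uo$-closed (because $(\cdot)^-$ is $uo$-continuous), $y\ge x_\beta$, so $y$ is an upper bound; applying the same to $z-x_\alpha$ for an arbitrary upper bound $z$ gives $y\le z$, so $y=\sup_\alpha x_\alpha$. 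Hence every increasing order bounded net has a supremum and $X$ is order complete. The $\sigma$-order complete statement follows by running the identical argument with sequences, noting that an increasing order bounded sequence is again $uo$-Cauchy and that its $uo$-limit, once it exists in $X$, is its supremum. Apart from the $uo$-Cauchy step, everything is bookkeeping.
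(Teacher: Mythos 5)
Your proof is correct and takes essentially the same route as the paper: reduce to an increasing order bounded net, note that order intervals are $\tau$-bounded, observe the net is $uo$-Cauchy (the paper simply quotes [\ref{712}] Lemma 2.1, the alternative you mention), apply the hypothesis, and identify the $uo$-limit of the increasing net with its supremum. Your Dedekind-completion argument for $uo$-Cauchyness and the cone-closedness argument for the limit identification just spell out steps the paper cites or leaves implicit.
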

\begin{proof}
Let $(x_{\alpha})$ be a net in $X$ such that $0\leq x_{\alpha}\uparrow\leq x$ for some $x\in X$. By [\ref{705}] Theorem 2.19, $(x_{\alpha})$ is $\tau$-bounded. By [\ref{712}] Lemma 2.1, $(x_{\alpha})$ is order Cauchy and hence $uo$-Cauchy. By the assumption that $(X,\tau)$ is boundedly $uo$-complete and the order boundedness of $(x_{\alpha})$, we conclude that $x_{\alpha}\xrightarrow{o}y$ for some $y\in X$. Since $(x_{\alpha})$ is increasing, $y=\sup x_{\alpha}$. The sequential argument is similar.
\end{proof}
Notice that a vector lattice $X$ is $uo$-complete if and only if $X$ equipped with the trivial topology (which is locally solid) is boundedly $uo$-complete. Thus, this is a more general concept than both $uo$-complete vector lattices and boundedly $uo$-complete Banach lattices. Notice also that the order completeness assumption in [\ref{712}] Proposition 2.8 may now be dropped:
\begin{corollary}
Let $X$ be a vector lattice. If $X$ is $uo$-complete then it is universally complete. Conversely, if $X$ is universally complete, and, in addition, has the countable sup property, then it is $uo$-complete.
\end{corollary}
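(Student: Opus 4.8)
The plan is to prove the two implications separately; in both directions the bulk of the work has, in effect, already been done in the preceding material.

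\textbf{$uo$-complete $\Rightarrow$ universally complete.} Recall that \emph{universally complete} means \emph{order complete and laterally complete}. Order completeness is immediate: by the Remark just above, a $uo$-complete vector lattice is precisely a vector lattice which, under the trivial (locally solid) topology, is boundedly $uo$-complete, so the Proposition just proved shows $X$ is order complete. For lateral completeness, let $(x_i)_{i\in I}$ be a disjoint family in $X_+$ and let $\mathcal F$ be the collection of finite subsets of $I$ directed by inclusion; set $y_F:=\sum_{i\in F}x_i=\sup_{i\in F}x_i$, so $(y_F)_{F\in\mathcal F}$ is increasing. I claim $(y_F)$ is $uo$-Cauchy. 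Fix $u\in X_+$. Since $X$ is order complete, $w:=\sup_{i\in I}(x_i\wedge u)$ exists in $X$; for $F_0\in\mathcal F$ put $r_{F_0}:=w-\sum_{i\in F_0}(x_i\wedge u)$, which is a net over $\mathcal F$ with $r_{F_0}\downarrow 0$ because $\sum_{i\in F_0}(x_i\wedge u)=\sup_{i\in F_0}(x_i\wedge u)\uparrow w$. If $F,F'\supseteq F_0$, disjointness gives $|y_F-y_{F'}|=\sum_{i\in F\triangle F'}x_i$, hence, distributing $\wedge u$ over the finite supremum and using that $F\triangle F'$ is disjoint from $F_0$,
\[
|y_F-y_{F'}|\wedge u=\sum_{i\in F\triangle F'}(x_i\wedge u)=\sup_{i\in F\triangle F'}(x_i\wedge u)\le w-\sum_{i\in F_0}(x_i\wedge u)=r_{F_0}.
\]
Thus $|y_F-y_{F'}|\wedge u\xrightarrow{o}0$ over $\mathcal F\times\mathcal F$, and since $u\in X_+$ was arbitrary, $(y_F)$ is $uo$-Cauchy. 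By $uo$-completeness $y_F\xrightarrow{uo}y$ for some $y\in X$. Since $(y_F)$ is increasing and the positive cone is $uo$-closed — if $0\le z_\alpha\xrightarrow{uo}z$ then $z^-\le|z_\alpha-z|$ for every $\alpha$, so the net $|z_\alpha-z|\wedge z^-$, which equals the constant $z^-$, also order-converges to $0$, forcing $z^-=0$ — one checks in the standard way that $y=\sup_F y_F=\sup_{i\in I}x_i$. Hence $X$ is laterally complete, and therefore universally complete.

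\textbf{Universally complete with countable sup property $\Rightarrow$ $uo$-complete.} A universally complete vector lattice is in particular order complete, so the hypotheses of [\ref{712}] Proposition 2.8 are satisfied and its converse implication applies verbatim; that argument uses only universal completeness together with the countable sup property (the latter to reduce a $uo$-Cauchy net to a sequence, for which convergence is then obtained from order completeness), and needs no modification.

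The routine ingredients — finite distributivity of $\wedge$ over $\vee$, the disjointness bookkeeping, and the short lemma that an increasing $uo$-convergent net converges to its supremum — cause no trouble. The one point requiring care is the identification of the dominating decreasing net $(r_{F_0})$, and this is exactly where the order completeness extracted in the first step is used; without it one would instead have to pass to the universal completion $X^u$ and invoke invariance of $uo$-convergence under order-dense extensions. I therefore expect the only real (and mild) obstacle to be sequencing the argument so that order completeness is secured before the lateral-completeness computation begins; the converse direction introduces no new difficulty, being inherited directly from [\ref{712}].
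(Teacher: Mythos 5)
Your proposal is correct, but it is more self-contained than what the paper actually does. The paper's ``proof'' is the one-line observation preceding the corollary: since $uo$-completeness of $X$ is the same as bounded $uo$-completeness of $X$ under the trivial locally solid topology, the preceding proposition yields order completeness for free, and then \emph{both} implications are simply inherited from [\ref{712}] Proposition 2.8, whose order-completeness hypothesis is thereby rendered redundant. You use the same mechanism (trivial topology plus the preceding proposition) to secure order completeness, and your converse direction is identical to the paper's appeal to [\ref{712}]; where you diverge is the forward direction, for which you prove lateral completeness directly: the net of finite partial sums $y_F=\sum_{i\in F}x_i$ of a disjoint positive family is shown to be $uo$-Cauchy by dominating $\lvert y_F-y_{F'}\rvert\wedge u=\sum_{i\in F\triangle F'}(x_i\wedge u)$ by the decreasing net $r_{F_0}=w-\sum_{i\in F_0}(x_i\wedge u)$ (with $w=\sup_i(x_i\wedge u)$, which exists by the order completeness already obtained), and then $uo$-completeness plus the $uo$-closedness of the positive cone identifies the $uo$-limit as $\sup_F y_F=\sup_i x_i$. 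The computations check out: the disjointness bookkeeping, the distributivity of $\wedge$ over finite suprema, and your argument that $z^-\le\lvert z_\alpha-z\rvert$ forces $z^-=0$ are all sound. What your route buys is independence from the forward half of Chen--Li's result, at the cost of a page of lattice arithmetic; what the paper's route buys is brevity, the whole corollary being a remark on [\ref{712}] once the new proposition supplies order completeness.
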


It is easy to see that a net is order null iff it is $uo$-null and has an order bounded tail. This is why it is of interest to consider topologically bounded $uo$-null nets, as topological boundedness acts as an approximation to order boundedness. Recall that a  locally solid vector lattice $(X,\tau)$ is said to satisfy the \textbf{\textit{Levi property}}\footnote{This property generalizes the concept of monotonically complete Banach lattices appearing in [\ref{MN}].} if every increasing $\tau$-bounded net of $X_+$ has a supremum in $X$. The Levi and Fatou properties together are enough to ensure that a space is boundedly $uo$-complete. The formal statement is Theorem~\ref{86868686}. Recall that a locally solid topology is \textbf{\textit{Fatou}} if it has a base at zero consisting of solid order closed sets. Although it is not obvious by definition, the Fatou property is independent of the definition of order convergence. This can be easily deduced as an argument similar to that of Lemma 1.15 in [\ref{705}] shows that a solid set is $o$-closed if and only if it is $o_1$-closed. In fact, more is true. By reviewing the arguments in section 3 of the Bachelor's thesis [\ref{Imhoff}], one can prove that a set $A$ in a vector lattice $X$ is $o_1$-closed if and only if it is $o$-closed.
\\

\begin{remark}\label{13376} It should be noted that if $\tau$ is Hausdorff then every $\tau$-convergent $uo$-Cauchy net $uo$-converges to its $\tau$-limit. This follows since lattice operations are $\tau$-continuous and the positive cone is $\tau$-closed: see [\ref{705}] Theorem 2.21. The following is a slight generalization of Proposition 4.2 in [\ref{716}]. The proof is similar but is provided for convienence of the reader.\end{remark}
\begin{proposition}
Suppose that $\tau$ is a complete Hausdorff Lebesgue topology on a vector lattice $X$. If $(x_{\alpha})$ is a $\tau$-almost order bounded $uo$-Cauchy net in $X$ then $(x_{\alpha})$ converges $uo$ and $\tau$ to the same limit.
\end{proposition}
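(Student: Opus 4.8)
The plan is to reduce everything to showing that $(x_{\alpha})$ is $\tau$-Cauchy. Once that is done, completeness of $\tau$ produces some $x\in X$ with $x_{\alpha}\xrightarrow{\tau}x$; since $\tau$ is Hausdorff and $(x_{\alpha})$ is $uo$-Cauchy, Remark~\ref{13376} immediately upgrades this to $x_{\alpha}\xrightarrow{uo}x$, so the $uo$- and $\tau$-limits coincide, which is exactly the assertion.

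To see that $(x_{\alpha})$ is $\tau$-Cauchy I would apply Proposition~\ref{45623} to the double net $(x_{\alpha}-x_{\alpha'})_{(\alpha,\alpha')}$. By the definition of $uo$-Cauchy this double net is $uo$-null, and $(X,\tau)$ has the Lebesgue property, so it suffices to check that $(x_{\alpha}-x_{\alpha'})$ is $\tau$-almost order bounded; Proposition~\ref{45623} then gives $x_{\alpha}-x_{\alpha'}\xrightarrow{\tau}0$, i.e.\ $(x_{\alpha})$ is $\tau$-Cauchy. (Alternatively one could combine Proposition~\ref{7} and Proposition~\ref{123456789}, but applying Proposition~\ref{45623} to the difference net is the most direct route.)

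The one genuine computation, and the step I expect to be the main (if mild) obstacle, is that $\tau$-almost order boundedness of $(x_{\alpha})$ passes to the net of differences. Fix a solid $\tau$-neighbourhood $V$ of zero and pick a solid neighbourhood $U$ with $U+U\subseteq V$. Almost order boundedness of $(x_{\alpha})$ furnishes $u\in X_+$ with $\bigl(\lvert x_{\alpha}\rvert-u\bigr)^+\in U$ for all $\alpha$. Using the elementary lattice inequality $(a+b-2u)^+\le (a-u)^++(b-u)^+$ for $a,b\in X_+$, together with $\lvert x_{\alpha}-x_{\alpha'}\rvert\le \lvert x_{\alpha}\rvert+\lvert x_{\alpha'}\rvert$, one gets
\[
\bigl(\lvert x_{\alpha}-x_{\alpha'}\rvert-2u\bigr)^+\le\bigl(\lvert x_{\alpha}\rvert-u\bigr)^++\bigl(\lvert x_{\alpha'}\rvert-u\bigr)^+\in U+U\subseteq V,
\]
and solidity of $V$ yields $\bigl(\lvert x_{\alpha}-x_{\alpha'}\rvert-2u\bigr)^+\in V$, i.e.\ $x_{\alpha}-x_{\alpha'}\in[-2u,2u]+V$ for all $\alpha,\alpha'$. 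Thus $(x_{\alpha}-x_{\alpha'})$ is $\tau$-almost order bounded with bounding element $2u$.

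With that in hand the argument closes as described: Proposition~\ref{45623} gives $\tau$-Cauchyness, completeness gives a $\tau$-limit $x$, and Remark~\ref{13376} gives $x_{\alpha}\xrightarrow{uo}x$. Everything beyond the displayed inequality is a direct invocation of results already proved, so I do not anticipate any further difficulty; the sequential version of the statement, if desired, would follow by the same argument restricted to sequences.
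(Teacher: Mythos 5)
Your proposal is correct and takes essentially the same route as the paper's own proof: apply Proposition~\ref{45623} to the difference net $(x_{\alpha}-x_{\alpha'})$, use completeness of $\tau$ to extract a $\tau$-limit, and invoke Remark~\ref{13376} to identify it as the $uo$-limit. The only difference is that you explicitly verify, via $\bigl(\lvert x_{\alpha}-x_{\alpha'}\rvert-2u\bigr)^+\le\bigl(\lvert x_{\alpha}\rvert-u\bigr)^++\bigl(\lvert x_{\alpha'}\rvert-u\bigr)^+$, that almost order boundedness passes to the difference net, a step the paper asserts without proof.
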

\begin{proof}
Suppose $(x_{\alpha})$ is $\tau$-almost order bounded and $uo$-Cauchy. Then the net $(x_{\alpha}-x_{\alpha'})$ is $\tau$-almost order bounded and is $uo$-convergent to zero. By Proposition~\ref{45623}, $(x_{\alpha}-x_{\alpha'})$ is $\tau$-null. It follows that $(x_{\alpha})$ is $\tau$-Cauchy and thus $\tau$-convergent to some $x\in X$ since $\tau$ is complete. By Remark~\ref{13376}, $(x_{\alpha})$ $uo$-converges to $x$.
\end{proof}

\section{Products and sublattices}
\subsection{Products}
Let $\{(X_{\alpha}, \tau_{\alpha})\}_{\alpha\in A}$ be a family of locally solid vector lattices and let $X=\prod X_{\alpha}$ be the Cartesian product, ordered componentwise, and equipped with the product topology $\prod \tau_{\alpha}$. It is known that $X$ has the structure of a locally solid vector lattice. See [\ref{705}] pages 8 and 56 for details.
\begin{theorem}\label{15}
Let $\{(X_{\alpha},\tau_{\alpha})\}$ be a family of locally solid vector lattices. Then $(\prod X_{\alpha}, u\prod \tau_{\alpha})=(\prod X_{\alpha}, \prod u\tau_{\alpha})$.
\end{theorem}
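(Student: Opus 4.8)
The plan is to show that the two locally solid topologies in question — the left-hand one locally solid by Theorem~\ref{1}, the right-hand one because a product of locally solid topologies is again locally solid — induce the same convergence of nets, and then to invoke the fact that a topology is determined by the nets it makes convergent (a set is closed exactly when it contains the limits of all its convergent nets). Since both unbounded convergences are translation invariant, it suffices to compare the \emph{null} nets of the two topologies.

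First I would record the standard description of the product vector lattice $X=\prod X_{\alpha}$: the order is componentwise, so $X_+=\prod (X_{\alpha})_+$, and the lattice operations are computed coordinatewise, so that for $x=(x_{\alpha})\in X$ and $u=(u_{\alpha})\in X_+$ one has $\lvert x\rvert\wedge u=(\lvert x_{\alpha}\rvert\wedge u_{\alpha})_{\alpha}$. I would also recall that convergence in the product topology $\prod\tau_{\alpha}$ is exactly coordinatewise convergence.

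Now unwind the definitions on both sides. A net $(x^{\beta})$ in $X$ is $u\prod\tau_{\alpha}$-null iff $\lvert x^{\beta}\rvert\wedge u\xrightarrow{\prod\tau_{\alpha}}0$ for every $u\in X_+$; by the remarks above this means: for every $(u_{\alpha})\in\prod (X_{\alpha})_+$ and every $\alpha\in A$, one has $\lvert x^{\beta}_{\alpha}\rvert\wedge u_{\alpha}\xrightarrow{\tau_{\alpha}}0$. On the other hand $(x^{\beta})$ is $\prod u\tau_{\alpha}$-null iff the component net $(x^{\beta}_{\alpha})$ is $u\tau_{\alpha}$-null for every $\alpha$, i.e. iff for every $\alpha\in A$ and every $u_{\alpha}\in (X_{\alpha})_+$, one has $\lvert x^{\beta}_{\alpha}\rvert\wedge u_{\alpha}\xrightarrow{\tau_{\alpha}}0$. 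These two conditions coincide: they differ only in the order of the quantifiers over $\alpha$ and over the positive elements, and a single positive coordinate $u_{\alpha}\in(X_{\alpha})_+$ can always be completed to a tuple in $\prod (X_{\alpha})_+$ (insert $0$ in the remaining coordinates), while conversely each tuple restricts to its coordinates. Hence the $u\prod\tau_{\alpha}$-null nets are precisely the $\prod u\tau_{\alpha}$-null nets, and by translation invariance the two topologies have the same convergent nets, so they are equal as (locally solid) topologies on $\prod X_{\alpha}$.

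There is essentially no serious obstacle here: once the definitions of the unbounded topology, of the product lattice, and of the product topology are written out, the statement reduces to a harmless interchange of quantifiers. The only points deserving a word of justification are the identification of $X_+$ and of the lattice operations on $\prod X_{\alpha}$ as coordinatewise, and the passage from ``same convergent nets'' to ``same topology''. A direct argument manipulating solid neighbourhood bases is also possible, but it is less transparent because of the finite-support bookkeeping that is built into a base of the product topology, so I would favour the net-theoretic route above.
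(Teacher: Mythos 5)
Your proposal is correct, but it takes a different route from the paper's own proof. The paper argues entirely at the level of neighbourhood bases: it writes down the canonical solid base $\{U_{i,u}^{\alpha}\}$ for each $u\tau_{\alpha}$, the finite-support base for the product topology, and the base $\{(\prod V^{\alpha})_{w}\}$ for $u\prod\tau_{\alpha}$ coming from Theorem~\ref{1}, and then checks by direct computation that every set in one base is a set in the other (the key bookkeeping being that one may take $w_{\alpha}=0$ outside the finitely many constrained coordinates, since a zero weight makes the corresponding condition vacuous). You instead characterize the null nets of both topologies --- using that the product order, the lattice operations, and product-topology convergence are all coordinatewise --- observe that the two characterizations differ only by an interchange of quantifiers (with the same zero-padding trick, completing a single $u_{\alpha}\in (X_{\alpha})_{+}$ to a tuple by zeros), and then invoke translation invariance plus the fact that a topology is determined by its convergent nets. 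Both arguments are sound. Yours is shorter and arguably more transparent, at the cost of appealing to the general principle that convergent nets determine the topology; the paper's base-level computation is more self-contained and hands you explicit neighbourhoods, which is in the spirit of how $u\tau$ was constructed in Theorem~\ref{1}. If you write yours up, do state explicitly that both topologies are linear (the left-hand one locally solid by Theorem~\ref{1}, the right-hand one as a product of locally solid topologies), since that is what licenses the reduction to null nets.
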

\begin{proof}
Let $\{U_i^{\alpha}\}_{i \in I_{\alpha}}$ be a solid base for $(X_{\alpha},\tau_{\alpha})$ at zero. We know the following:
\\

$\{U_{i,u}^{\alpha}\}_{i\in I_{\alpha}, u\in {X_{\alpha}}_+}$ is a solid base for $(X_{\alpha}, u\tau_{\alpha})$ at zero where $U_{i,u}^{\alpha}=\{x \in X_{\alpha}: \lvert x\rvert \wedge u \in U_i^{\alpha}\}$.
\\

A solid base of $(\prod X_{\alpha}, \prod u\tau_{\alpha})$ at zero consists of sets of the form $\prod U^{\alpha}$  where $U^{\alpha}=X_{\alpha}$ for all but finitely many $\alpha$ and if $U^{\alpha} \neq X_{\alpha}$ for some $\alpha$ then $U^{\alpha}=U_{i,u}^{\alpha}$ for some $i \in I_{\alpha}$ and $u \in {X_{\alpha}}_+$.
\\

A solid base for $(\prod X_{\alpha},\prod \tau_{\alpha})$ at zero consists of sets of the form $\prod V^{\alpha}$ where $V^{\alpha}=X_{\alpha}$ for all but finitely many $\alpha$ and if $V^{\alpha} \neq X_{\alpha}$ for some $\alpha$ then $V^{\alpha}=U^{\alpha}_i$ for some $i \in I_{\alpha}$. Therefore, a solid base for $(\prod X_{\alpha},u\prod \tau_{\alpha})$ at zero consists of sets of the form $(\prod V^{\alpha})_w$ where $w=(w_{\alpha})\in (\prod X_{\alpha})_+=\prod  {X_{\alpha}}_+$ and $(\prod V^{\alpha})_w=\{x=(x_{\alpha}) \in \prod X_{\alpha}: \lvert (x_{\alpha})\rvert \wedge (w_{\alpha})=(\lvert x_{\alpha}\rvert \wedge w_{\alpha}) \in \prod V^{\alpha}\}.$ Here we used the fact that lattice operations are componentwise.
\\

The theorem follows easily from this. Consider a set $\prod U^{\alpha}$ and assume $U^{\alpha}=X_{\alpha}$ except for the indices $\alpha_1, \dots, \alpha_n$ where $U^{\alpha_j}=U^{\alpha_j}_{i_j,u_j}$ for $j\in \{1,\dots, n\}$, $i_j\in I_{\alpha_j}$ and $u_j \in {X_{\alpha_j}}_+$. Then $\prod U^{\alpha}=(\prod V^{\alpha})_w$ where $w_{\alpha_j}=u_j$ for $j=1,\dots, n$ and $w_{\alpha}=0$ otherwise and $V^{\alpha_j}=U^{\alpha_j}_{i_j}$ for $j=1\dots, n$ and $V^{\alpha}=X_{\alpha}$ otherwise.
\\

Conversely, consider a set of the form $(\prod V^{\alpha})_w$ and assume $V^{\alpha}=X_{\alpha}$ except for the indices $\alpha_1,\dots, \alpha_n$ in which case $V^{\alpha_j}=U_{i_j}^{\alpha_j}$ for $j \in\{1,\dots,n\}$ and $i_j \in I_{\alpha_j}$. Then $(\prod V^{\alpha})_w=\prod U^{\alpha}$ where $U^{\alpha_j}=U^{\alpha_j}_{i_j, w_{\alpha_j}}$ for $j \in \{1,\dots, n\}$ and $U^{\alpha}=X_{\alpha}$ otherwise.
\\

Since each base is contained in the other, the topologies agree.
\end{proof}

\begin{remark}
Although unbounded topologies behave very well in the product, less is known about topological completions and quotients. It will soon be proved that the completion of a Hausdorff unbounded Lebesgue topology is a Hausdorff unbounded Lebesgue topology, but in general the picture is unclear.
\end{remark}
\begin{question}
Let $(X,\tau)$ be a Hausdorff locally solid vector lattice, and assume $\tau$ is unbounded. Under what conditions on $\tau$ (Fatou, pre-Lebesgue, $\sigma$-Lebesgue, none) is the topological completion of $(X,\tau)$ unbounded? Give conditions on $\tau$ for the quotient of $(X,\tau)$ by a closed ideal of $X$ to be unbounded. Is this true if $\tau$ is Lebesgue? 
\end{question}

\subsection{Sublattices}
Let $Y$ be a sublattice of a locally solid vector lattice $(X,\tau)$. The reader should convince themselves that $Y$, equipped with the subspace topology, $\tau|_Y$, is a locally solid vector lattice in its own right. It would be natural to now compare $u(\tau|_Y)$ and $(u\tau)|_Y$, but this was already implicitly done in [\ref{706}]. In general, $u(\tau|_Y) \subsetneq (u\tau)|_Y$, even if $Y$ is a band. If $(y_{\alpha})$ is a net in $Y$ we will write $y_{\alpha}\xrightarrow{u\tau}0$ in $Y$ to mean $y_{\alpha} \rightarrow0$ in $(Y,u(\tau|_Y)).$ We now look for conditions that make all convergences agree.
\begin{lemma}\label{18}
Let $Y$ be a sublattice of a locally solid vector lattice $(X,\tau)$ and $(y_{\alpha})$ a net in $Y$ such that $y_{\alpha} \xrightarrow{u\tau} 0$ in $Y$. Each of the following conditions implies that $y_{\alpha} \xrightarrow{u\tau} 0$ in $X$.
\begin{enumerate}
\item $Y$ is majorizing in $X$;
\item $Y$ is $\tau$-dense in $X$;
\item $Y$ is a projection band in $X$.

\end{enumerate}
\end{lemma}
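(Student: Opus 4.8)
The plan is to handle the three conditions by reducing, in each case, the statement $|y_\alpha| \wedge u \xrightarrow{\tau} 0$ (for a fixed $u \in X_+$) to a statement about $\tau$-convergence of quantities that live, up to a controllable error, inside $Y$, where the hypothesis $y_\alpha \xrightarrow{u\tau} 0$ in $Y$ applies. Fix $u \in X_+$; we must show $|y_\alpha| \wedge u \xrightarrow{\tau} 0$. Passing to $|y_\alpha|$ via Corollary~\ref{1.1}, we may assume $y_\alpha \geq 0$ throughout.

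For (i), since $Y$ is majorizing, pick $v \in Y_+$ with $v \geq u$. Then $y_\alpha \wedge u \leq y_\alpha \wedge v$, and $y_\alpha \wedge v \xrightarrow{\tau} 0$ because $v \in Y_+$ and $y_\alpha \xrightarrow{u\tau} 0$ in $Y$; by solidity of a base at zero (i.e. by the squeeze, using that $\tau$ is locally solid), $y_\alpha \wedge u \xrightarrow{\tau} 0$ as well.

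For (iii), write $X = Y \oplus Y^d$ with band projection $P$ onto $Y$. For $u \in X_+$, decompose $u = Pu + (I-P)u$ with $Pu \in Y_+$. I claim $y_\alpha \wedge u = y_\alpha \wedge Pu$: indeed $y_\alpha \in Y$, so $y_\alpha \perp (I-P)u$, hence $y_\alpha \wedge u = y_\alpha \wedge (Pu + (I-P)u) = y_\alpha \wedge Pu$ (a net in a band is disjoint from everything in the complementary band, and $a \wedge (b+c) = a \wedge b$ when $a \perp c$ and all are positive). Since $Pu \in Y_+$, the hypothesis gives $y_\alpha \wedge Pu \xrightarrow{\tau} 0$, so $y_\alpha \wedge u \xrightarrow{\tau} 0$.

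For (ii), the density case is the main obstacle, since there is no single element of $Y$ dominating $u$. Fix a solid $\tau$-neighbourhood $U$ of zero and choose a solid $\tau$-neighbourhood $W$ with $W + W + W \subseteq U$. By $\tau$-density pick $v \in Y$ with $u - v \in W$; replacing $v$ by $|v| \wedge u \in Y_+$ (which only improves the approximation: $|u - |v|\wedge u| \leq |u-v|$, using $u \geq 0$), we may assume $0 \leq v \leq u$ and $u - v \in W$. Now $y_\alpha \wedge u \leq y_\alpha \wedge v + (u - v)$ via $|a \wedge b - a \wedge c| \leq |b - c|$, so by solidity it suffices to control $y_\alpha \wedge v$ modulo $W$; but $v \in Y_+$, so $y_\alpha \wedge v \xrightarrow{\tau} 0$ in $X$ by hypothesis, hence lies in $W$ eventually. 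Combining, $y_\alpha \wedge u \in W + W \subseteq U$ eventually, which is what we needed. The only subtlety to check carefully is that the replacement of the approximant $v$ by $|v|\wedge u$ preserves membership in $W$, which follows from solidity of $W$ together with $\bigl| u - (|v| \wedge u) \bigr| \leq |u - v|$ for $u \in X_+$.
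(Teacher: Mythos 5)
Parts (i) and (iii) of your argument are correct and essentially the paper's own: for (i) you dominate $u$ by some $v\in Y_+$ and squeeze using solidity, and for (iii) you use $y_\alpha\perp (I-P)u$ together with $a\wedge(b+c)\le a\wedge b+a\wedge c$ for positive elements to reduce to the test vector $Pu\in Y_+$.

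Part (ii), however, has a genuine gap at the replacement step. You replace the approximant $v\in Y$ by $|v|\wedge u$ and assert $|v|\wedge u\in Y_+$. That is false in general: $Y$ is only a sublattice, so it is closed under lattice operations among its own members, but $u\in X_+$ need not belong to $Y$, and the infimum $|v|\wedge u$ (computed in $X$) need not lie in $Y$ unless $Y$ is an ideal. (For instance, let $Y$ be the piecewise linear functions inside $X=C[0,1]$, a norm-dense sublattice: $|v|\wedge u$ is generally not piecewise linear.) Since the hypothesis $y_\alpha\xrightarrow{u\tau}0$ in $Y$ only yields $y_\alpha\wedge w\xrightarrow{\tau}0$ for test vectors $w\in Y_+$, the subsequent line ``but $v\in Y_+$, so $y_\alpha\wedge v\xrightarrow{\tau}0$ by hypothesis'' is exactly where the argument breaks; the subtlety you flagged (that the new approximant still approximates $u$) was never the issue. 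The repair is immediate and is what the paper does: do not truncate by $u$ at all, just replace $v$ by $|v|\in Y_+$, noting $\bigl\lvert |v|-u\bigr\rvert=\bigl\lvert |v|-|u|\bigr\rvert\le |v-u|\in W$ and $W$ is solid. Your estimate $y_\alpha\wedge u\le y_\alpha\wedge |v|+\bigl\lvert u-|v|\bigr\rvert$ (from $|a\wedge b-a\wedge c|\le |b-c|$, or directly from $u\le |v|+\lvert u-|v|\rvert$) never needed $0\le v\le u$, so the rest of your argument goes through verbatim with $|v|$ in place of $|v|\wedge u$: eventually $0\le y_\alpha\wedge u\le y_\alpha\wedge |v|+\bigl\lvert u-|v|\bigr\rvert\in W+W\subseteq U$, and solidity finishes the proof.
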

\begin{proof}
WLOG, $y_{\alpha} \geq 0$ for every $\alpha$. (i) gives no trouble. To prove (ii), take $u \in X_+$ and fix solid $\tau$-neighbourhoods $U$ and $V$ of zero (in $X$) with $V+V \subseteq U$. Since $Y$ is dense in $X$ we can find a $v \in Y$ with $v-u \in V$. WLOG, $v \in Y_+$ since $V$ is solid and $\lvert\lvert v\rvert-u\rvert=\lvert\lvert v\rvert-\lvert u\rvert\rvert \leq \lvert v-u\rvert \in V$. By assumption, $y_{\alpha} \wedge v \xrightarrow{\tau}0$ so we can find $\alpha_0$ such that $y_{\alpha} \wedge v \in V$ whenever $\alpha\geq \alpha_0$. It follows from $u \leq v+\lvert u-v\rvert$ that $y_{\alpha}\wedge u\leq y_{\alpha}\wedge v+\lvert u-v\rvert.$ This implies that $y_{\alpha} \wedge u \in U$ for all $\alpha \geq \alpha_0$ since
\begin{equation}
0 \leq y_{\alpha} \wedge u \leq y_{\alpha}\wedge v +\lvert u-v\rvert \in V+V \subseteq U
\end{equation}
where, again, we used that $U$ and $V$ are solid. This means that $y_{\alpha}\wedge u\xrightarrow{\tau}0$. Hence, $y_{\alpha}\xrightarrow{u\tau}0$ in $X$.
\\

To prove (iii), let $u \in X_+$. Then $u=v+w$ for some positive $v \in Y$ and $w \in Y^d$. It follows from $y_\alpha\perp w$ that $y_{\alpha} \wedge u=y_{\alpha} \wedge v \xrightarrow{\tau}0.$
\end{proof}
Let $X$ be a vector lattice, $\tau$ a locally solid topology on $X$, and $X^{\delta}$ the order completion of $X$. It is known that one can find a locally solid topology, say, $\tau^*$, on $X^{\delta}$ that extends $\tau$. See Exercise $8$ on page $73$ of [\ref{705}] for details on how to construct such an extension. Since $X$ is majorizing in $X^{\delta}$, Lemma~\ref{18} gives the following.
\begin{corollary}\label{18.1}
If $(X,\tau)$ is a locally solid vector lattice and $(x_{\alpha})$ is a net in $X$ then $x_{\alpha} \xrightarrow{u\tau}0$ in $X$ if and only if $x_{\alpha} \xrightarrow{u(\tau^*)}0$ in $X^{\delta}$. Here $\tau^*$ denotes a locally solid extension of $\tau$ to $X^{\delta}$. 
\end{corollary}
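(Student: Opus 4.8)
The plan is to prove the two implications separately, the only real ingredients being that the extension $\tau^*$ restricts to $\tau$ on $X$ and that $X$ is majorizing in $X^\delta$.

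For the direction ``$x_\alpha\xrightarrow{u(\tau^*)}0$ in $X^\delta$ $\Rightarrow$ $x_\alpha\xrightarrow{u\tau}0$ in $X$'' I would argue directly, without invoking the majorizing hypothesis. Fix $u\in X_+$. Since $X$ is a sublattice of $X^\delta$, the net $(\lvert x_\alpha\rvert\wedge u)$ lies in $X$, and by hypothesis it is $\tau^*$-null in $X^\delta$; because $\tau^*|_X=\tau$, this net is then $\tau$-null in $X$. As $u\in X_+$ was arbitrary, $x_\alpha\xrightarrow{u\tau}0$ in $X$. (Equivalently, one may quote the general inclusion $u(\tau^*|_X)\subseteq(u\tau^*)|_X$ recorded above: convergence of a net of $X$ in the finer topology $(u\tau^*)|_X$ forces convergence in $u\tau$.)

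For the converse I would simply apply Lemma~\ref{18}(i) with $Y=X$ viewed as a sublattice of the locally solid vector lattice $(X^\delta,\tau^*)$. Since $X$ is majorizing in $X^\delta$ and $\tau^*|_X=\tau$, the hypothesis ``$x_\alpha\xrightarrow{u\tau}0$ in $X$'' is precisely the assertion that $(x_\alpha)$ is $u(\tau^*|_X)$-null, i.e.\ is $u(\tau^*)$-null in $X$ in the notation introduced before Lemma~\ref{18}; that lemma then upgrades this to $x_\alpha\xrightarrow{u(\tau^*)}0$ in $X^\delta$, as wanted.

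I do not expect a genuine obstacle here: once Lemma~\ref{18} and the existence of a locally solid extension $\tau^*$ (Exercise~8 on p.~73 of [\ref{705}]) are granted, the argument is pure bookkeeping. The one point to be careful with is keeping the three topologies $u(\tau^*|_X)$, $(u\tau^*)|_X$ and $u\tau$ distinct, and remembering that the identity $\tau^*|_X=\tau$ collapses the first and the third — which is exactly what makes Lemma~\ref{18}(i) directly applicable.
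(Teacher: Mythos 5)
Your proof is correct and follows essentially the same route as the paper, which simply observes that $X$ is majorizing in $X^{\delta}$ and invokes Lemma~\ref{18}(i), leaving the easy direction (restriction of test vectors to $X_+$ together with $\tau^*|_X=\tau$) implicit. Your write-up just makes that implicit direction explicit, so there is nothing to add.
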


\section{Pre-Lebesgue property and disjoint sequences}
Recall the following definition from page 75 of [\ref{705}]:
\begin{defn}\label{22}
Let $(X,\tau)$ be a locally solid vector lattice. We say that $(X,\tau)$ satisfies the \textbf{\textit{pre-Lebesgue property}} (or that $\tau$ is a \textbf{\textit{pre-Lebesgue topology)}}, if $0\leq x_n\! \uparrow \leq x$ in $X$ implies that $(x_n)$ is a $\tau$-Cauchy sequence.
\end{defn}
Recall that Theorem 3.23 of [\ref{705}] states that in an Archimedean locally solid vector lattice the Lebesgue property implies the pre-Lebesgue property. It is also known that in a topologically complete Hausdorff locally solid vector lattice that the Lebesgue property is equivalent to the pre-Lebesgue property and these spaces are always order complete. This is Theorem 3.24 of [\ref{705}]. The next theorem tells us exactly when disjoint sequences are $u\tau$-null. Parts (i)-(iv) are Theorem 3.22 of [\ref{705}], (v) and (vi) are new.
\begin{theorem}\label{23}
For a locally solid vector lattice $(X, \tau)$ TFAE:
\begin{enumerate}
\item $(X,\tau)$ satisfies the pre-Lebesgue property;

\item If $0 \leq x_{\alpha}\uparrow \leq x$ holds in $X$, then $(x_{\alpha})$ is a $\tau$-Cauchy net of $X$;
\item Every order bounded disjoint sequence of $X$ is $\tau$-convergent to zero;
\item Every order bounded $k$-disjoint sequence of $X$ is $\tau$-convergent to zero;
\item Every disjoint sequence in $X$ is $u\tau$-convergent to zero;
\item Every disjoint net in $X$ is $u\tau$-convergent to zero.\footnote{In statements such as this we require that the index set of the net has no maximal elements. See page 9 of [\ref{705}] for a further discussion on this minor issue.}
\end{enumerate}
\end{theorem}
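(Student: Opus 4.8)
The plan is to establish the cycle (i)$\Rightarrow$(ii)$\Rightarrow\cdots\Rightarrow$(iv), which is already available from Theorem 3.22 of [\ref{705}], and then to graft on the two new statements. Since (iv)$\Rightarrow$(iii) and (iii)$\Rightarrow$(v) is the natural entry point, and (v)$\Rightarrow$(vi) needs an argument, I would aim for the chain (iv)$\Rightarrow$(v)$\Rightarrow$(vi)$\Rightarrow$(iii), closing the loop. Actually it is cleaner to prove (vi)$\Rightarrow$(v) trivially (a sequence is a net over an index set with no maximal element), (v)$\Rightarrow$(iii) by the observation below, and then the real work is (ii)$\Rightarrow$(vi) or (i)$\Rightarrow$(vi).

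\emph{First}, (v)$\Rightarrow$(iii): if $(x_n)$ is an order bounded disjoint sequence, then by (v) it is $u\tau$-null, and by Proposition~\ref{8} an order bounded $u\tau$-null net is $\tau$-null; hence $x_n\xrightarrow{\tau}0$. \emph{Second}, (vi)$\Rightarrow$(v) is immediate. So it remains to prove, say, (ii)$\Rightarrow$(vi). Let $(x_\alpha)_{\alpha\in A}$ be a disjoint net in $X$ (with $A$ having no maximal element); we must show $\lvert x_\alpha\rvert\wedge u\xrightarrow{\tau}0$ for each fixed $u\in X_+$. Replacing $x_\alpha$ by $\lvert x_\alpha\rvert\wedge u$ — which is still a disjoint net, now order bounded by $u$ — we reduce to showing that an \emph{order bounded} disjoint net is $\tau$-null. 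This is the crux, and I expect it to be the main obstacle: the classical result (iii) only gives this for \emph{sequences}, and passing from order bounded disjoint sequences to order bounded disjoint nets is exactly the kind of step that requires care. The standard device is: suppose not, so there is a solid $\tau$-neighbourhood $U$ of zero and a subnet staying outside $U$; using disjointness and order boundedness by $u$, extract (by a routine recursive construction, exploiting that $A$ has no maximal element so one can always go further out in the net) a disjoint \emph{sequence} $(x_{\alpha_k})$ with each $x_{\alpha_k}\notin U$, contradicting (iii). One must check that the recursively chosen terms are genuinely pairwise disjoint — but this is automatic since the \emph{whole} net is disjoint — and that one can keep choosing indices outside $U$, which is what the subnet assumption provides.

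\emph{Alternatively}, and perhaps more in the spirit of the paper, one can prove (i)$\Rightarrow$(vi) directly by adapting the proof of (i)$\Rightarrow$(iii): the pre-Lebesgue property says $0\le y_n\uparrow\le x$ implies $(y_n)$ is $\tau$-Cauchy, and the classical argument turning this into ``order bounded disjoint sequences are $\tau$-null'' (see the proof of Theorem 3.22 in [\ref{705}]) in fact does not use countability of the index set in an essential way once one phrases it correctly: given a disjoint net $(x_\alpha)$ order bounded by $u$ and a solid neighbourhood $U$, one picks $V$ with $V+V\subseteq U$, and the partial suprema of finite disjoint subfamilies increase and are bounded by $u$, hence form a $\tau$-Cauchy net, forcing all but a ``small'' part of each $x_\alpha$ into $V$. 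I would present whichever of these is shorter; I lean toward the subnet-to-subsequence reduction since it cleanly reuses (iii) as a black box. \emph{Finally}, I would remark that (ii) rather than (i) is the convenient hypothesis to start the new implications from, since (ii) already phrases the pre-Lebesgue condition net-wise, and note that the footnote's caveat about maximal elements is exactly what makes the recursive extraction in (ii)$\Rightarrow$(vi) legitimate.
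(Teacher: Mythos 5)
Your proposal is correct and takes essentially the same route as the paper: both treat (i)--(iv) as the known classical block and attach (v) and (vi) using exactly the two devices you describe, namely wedging with a fixed $u\in X_+$ to pass between disjoint families and order bounded disjoint families, and recursively extracting (via the no-maximal-element assumption) a disjoint subsequence from a disjoint net that frequently misses a solid neighbourhood. The paper merely arranges the implications as (iii)$\Leftrightarrow$(v) and (v)$\Leftrightarrow$(vi) instead of your (iii)$\Rightarrow$(vi)$\Rightarrow$(v)$\Rightarrow$(iii), which is an inessential difference.
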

\begin{proof}
(iii)$\Rightarrow$(v): Suppose $(x_n)$ is a disjoint sequence. For every $u\in X_+$, $(\lvert x_n\rvert \wedge u)$ is order bounded and disjoint, so is $\tau$-convergent to zero. This proves $x_n \xrightarrow{u\tau}0$.
\\

(v) $\Rightarrow$ (iii): Let $(x_n)\subseteq [-u,u]$ be a disjoint order bounded sequence. By (v), $x_n \xrightarrow{u\tau}0$ so, in particular, $\lvert x_n\rvert=\lvert x_n\rvert \wedge u \xrightarrow{\tau}0$. This proves $(x_n)$ is $\tau$-null.
\\

Next we prove (v)$\Leftrightarrow$(vi). Clearly (vi)$\Rightarrow$(v). Assume (v) holds and suppose there exists a disjoint net $(x_{\alpha})$ which is not $u\tau$-null. Let $\{U_i\}$ be a solid base of neighbourhoods of zero for $\tau$ and $\{U_{i,u}\}$ the solid base for $u\tau$ described in Theorem~\ref{1}. Since $(x_{\alpha})$ is not $u\tau$-null there exists $U_{i,u}$ such that for every $\alpha$ there exists $\beta > \alpha$ with $x_{\beta} \notin U_{i,u}$. Inductively, we find an increasing sequence $(\alpha_k)$ of indices such that $x_{\alpha_k} \notin U_{i,u}$. Hence the sequence $(x_{\alpha_k})$ is disjoint but not $u\tau$-null.
\end{proof}
Since, for a Banach lattice, the norm topology is complete, the pre-Lebesgue property agrees with the Lebesgue property. This theorem can therefore be thought of as a generalization of Proposition 3.5 in [\ref{706}]. Theorem~\ref{23} has the following corollaries:
\begin{corollary}\label{10201}
$\tau$ has the pre-Lebesgue property if and only if $u\tau$ does.
\end{corollary}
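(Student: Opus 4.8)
The plan is to read this off directly from Theorem~\ref{23} together with the idempotency relation $uu\tau=u\tau$ noted in the remark following Proposition~\ref{8}. The key observation is that the equivalence (i)$\Leftrightarrow$(v) in Theorem~\ref{23} characterizes the pre-Lebesgue property of a locally solid topology $\rho$ purely in terms of the associated unbounded topology: namely, $\rho$ is pre-Lebesgue if and only if every disjoint sequence in $X$ is $u\rho$-null.

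First I would apply this to $\rho=\tau$: by Theorem~\ref{23}, $\tau$ has the pre-Lebesgue property if and only if every disjoint sequence in $X$ is $u\tau$-convergent to zero. Next, since $u\tau$ is itself a locally solid topology on $X$ by Theorem~\ref{1}, Theorem~\ref{23} applies equally to $(X,u\tau)$: the topology $u\tau$ has the pre-Lebesgue property if and only if every disjoint sequence in $X$ is $u(u\tau)$-convergent to zero. Now invoke $uu\tau=u\tau$, so that the condition ``every disjoint sequence is $u(u\tau)$-null'' is literally the same condition as ``every disjoint sequence is $u\tau$-null.'' Chaining the two equivalences yields that $\tau$ has the pre-Lebesgue property if and only if $u\tau$ does.

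There is essentially no obstacle here; the only thing to be careful about is that Theorem~\ref{23} is being applied to the possibly non-Hausdorff locally solid vector lattice $(X,u\tau)$, which is legitimate since that theorem is stated for arbitrary (not necessarily Hausdorff) locally solid vector lattices, and $(X,u\tau)$ is one by Theorem~\ref{1}. One could alternatively phrase the proof via condition (vi) (disjoint nets) instead of (v) (disjoint sequences) with no change in substance.
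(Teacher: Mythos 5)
Your proof is correct, and it takes a slightly different route than the paper's. The paper's argument reads the corollary off equivalence (iii) of Theorem~\ref{23}: the pre-Lebesgue property is characterized by order bounded disjoint sequences being null, and by Proposition~\ref{8} the convergences $\tau$ and $u\tau$ agree on order bounded sequences, so the two pre-Lebesgue properties are literally the same condition. You instead use equivalence (v), applied once to $\tau$ and once to $u\tau$ (legitimately, since $u\tau$ is locally solid by Theorem~\ref{1} and Theorem~\ref{23} needs no Hausdorff assumption), and then close the loop with the idempotency $uu\tau=u\tau$. Both are one-line consequences of Theorem~\ref{23}; your version is more structural, exploiting the fixed-point property of the map $\tau\mapsto u\tau$, whereas the paper's is more direct. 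Note that the idempotency you invoke is itself proved in the paper from the very same observation that $\tau$- and $u\tau$-convergence coincide on order bounded nets, so at bottom the two arguments rest on the same facts, just packaged differently.
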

\begin{proof}
$\tau$ and $u\tau$-convergences agree on order bounded sequences. Apply (iii).
\end{proof}
\begin{corollary}\label{1000}
If $\tau$ is pre-Lebesgue and unbounded then every disjoint sequence of $X$ is $\tau$-convergent to 0.
\end{corollary}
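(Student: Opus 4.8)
The plan is to obtain this directly from Theorem~\ref{23}, with the hypothesis ``unbounded'' read in the sense of Definition~\ref{19}. Since $\tau$ is unbounded we have $\tau = u\tau$ as topologies, so that $\tau$-convergence and $u\tau$-convergence of nets (and in particular of sequences) coincide. Now I would invoke the implication (i)$\Rightarrow$(v) of Theorem~\ref{23}: because $(X,\tau)$ has the pre-Lebesgue property, every disjoint sequence in $X$ is $u\tau$-convergent to zero. Substituting $u\tau = \tau$ yields that every disjoint sequence in $X$ is $\tau$-convergent to zero, which is precisely the assertion.

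There is no genuine obstacle here; the whole content is already packaged inside Theorem~\ref{23}, and the corollary is merely its restriction to the fixed points of the idempotent map $\tau \mapsto u\tau$. The one point deserving a moment's care is to confirm that ``unbounded'' is being used in the sense of Definition~\ref{19}, i.e.\ $\tau = u\tau$, rather than anything looser; once that is pinned down, the proof is a single substitution into part (v). An alternative, slightly more roundabout route would be to note via Corollary~\ref{10201} that $u\tau$ is pre-Lebesgue (which is automatic here, as $u\tau = \tau$) and then apply the equivalence (iii)$\Leftrightarrow$(v) to the topology $u\tau$; but appealing to (v) for $\tau$ itself is the cleanest path, and that is the one I would write down.
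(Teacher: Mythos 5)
Your proof is correct and is exactly the intended argument: since $\tau$ is unbounded, $\tau=u\tau$, and Theorem~\ref{23} ((i)$\Rightarrow$(v)) gives that disjoint sequences are $u\tau$-null, hence $\tau$-null. The paper leaves this corollary without proof precisely because it is this one-line substitution, so your approach coincides with the paper's.
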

\begin{question}
Suppose $(X,\tau)$ is a Hausdorff locally solid vector lattice. If every disjoint sequence of $X$ is $\tau$-null, do $\tau$ and $u\tau$ agree (at least on sequences)?
\end{question}


\subsection{$\sigma$-Lebesgue topologies}
Recall that a locally solid topology $\tau$ is \textbf{\textit{$\sigma$-Lebesgue}} if $x_n\downarrow 0\Rightarrow x_n\xrightarrow{\tau}0$ or, equivalently, $x_n\xrightarrow{o_1}0 \Rightarrow x_n\xrightarrow{\tau}0$. Example 3.25 in [\ref{705}] shows that the $\sigma$-Lebesgue property does not imply the pre-Lebesgue property. It should be noted that an equivalent definition is not obtained if we replace $o_1$-convergence with $o$-convergence in the latter definition of the $\sigma$-Lebesgue property. In other words, the $\sigma$-Lebesgue property is not independent of the definition of order convergence.
\\

By Theorem~\ref{23}(v) it may be tempting to conclude that if $\tau$ is $\sigma$-Lebesgue then $\tau$ is pre-Lebesgue (since disjoint sequences are $uo$-null). This is not the case, however, as the example above illustrates. In the next example we show how the definition of order convergence can effect properties of $uo$-convergence. We say a net $(x_{\alpha})$ in a vector lattice $X$ is $uo_1$-convergent to $x\in X$ if $\lvert x_{\alpha}-x\rvert\wedge u\xrightarrow{o_1}0$ for all $u\in X_+$.
\begin{example}
By $[\ref{708}]$ Corollary $3.6$ every disjoint sequence in a vector lattice $X$ is $uo$-null. We will show that it is not the case that every disjoint sequence in $X$ is $uo_1$-null. Indeed, let $(X,\tau)$ be as in Example 3.25 of [\ref{705}] and assume every disjoint sequence is $uo_1$-null. Then, since $\tau$ is $\sigma$-Lebesgue, every disjoint sequence is $u\tau$-null. By Theorem~\ref{23}(v) $\tau$ has the pre-Lebesgue property, a contradiction.
\end{example}
\begin{proposition}\label{23.1}
Suppose $\tau$ is a locally solid topology. $\tau$ is Lebesgue iff $u\tau$ is. $\tau$ is $\sigma$-Lebesgue iff $u\tau$ is.
\end{proposition}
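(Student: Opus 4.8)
The plan is to reduce both equivalences to the behaviour of decreasing nets, using the observation recorded just after Definition~\ref{3} that $\tau$ is Lebesgue precisely when $x_{\alpha}\downarrow 0$ implies $x_{\alpha}\xrightarrow{\tau}0$, together with the definition of $\sigma$-Lebesgue in terms of decreasing sequences $x_{n}\downarrow 0$. The only lattice fact I need is that if $x_{\alpha}\downarrow 0$ (a net, or a sequence) and $u\in X_+$, then $x_{\alpha}\wedge u\downarrow 0$: the family $(x_{\alpha}\wedge u)$ is clearly decreasing, and any lower bound $z$ of it satisfies $z\le x_{\alpha}\wedge u\le x_{\alpha}$ for every $\alpha$, hence $z\le \inf_{\alpha}x_{\alpha}=0$, so $0$ is its infimum.

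For the forward implications, suppose $\tau$ is Lebesgue and let $x_{\alpha}\downarrow 0$. Fix $u\in X_+$; then $\lvert x_{\alpha}\rvert\wedge u=x_{\alpha}\wedge u\downarrow 0$, so the Lebesgue property of $\tau$ gives $x_{\alpha}\wedge u\xrightarrow{\tau}0$. As $u\in X_+$ was arbitrary, $x_{\alpha}\xrightarrow{u\tau}0$, and therefore $u\tau$ is Lebesgue (this is essentially the content of Proposition~\ref{7}). Running the identical argument with sequences in place of nets proves that $\tau$ $\sigma$-Lebesgue implies $u\tau$ $\sigma$-Lebesgue, provided we phrase the $\sigma$-Lebesgue property via $x_{n}\downarrow 0$, which is its definition.

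For the converse implications, suppose $u\tau$ is Lebesgue and let $x_{\alpha}\downarrow 0$. Then $x_{\alpha}\xrightarrow{u\tau}0$; but a decreasing net with infimum $0$ is order bounded, so by Proposition~\ref{8} the $\tau$- and $u\tau$-convergences coincide on it, whence $x_{\alpha}\xrightarrow{\tau}0$ and $\tau$ is Lebesgue. The same argument with sequences shows that $u\tau$ $\sigma$-Lebesgue implies $\tau$ $\sigma$-Lebesgue.

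There is no serious obstacle; the proposition is really just a repackaging of Proposition~\ref{8} (and is a close relative of Proposition~\ref{7}). The one point that deserves a moment's care is making sure the $\sigma$-Lebesgue equivalence is stated through decreasing sequences rather than $o$-null sequences, since, as the text emphasizes, the $\sigma$-Lebesgue property is not independent of the chosen definition of order convergence; working with the $x_{n}\downarrow 0$ formulation throughout sidesteps this issue entirely.
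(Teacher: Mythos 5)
Your argument is essentially the paper's own: the paper notes that if $x_{\alpha}\downarrow 0$ then, after passing to a tail, the net is order bounded, so by Proposition~\ref{8} the $\tau$- and $u\tau$-convergences coincide on it, and both equivalences then follow from the decreasing-net/sequence formulations of the Lebesgue and $\sigma$-Lebesgue properties, exactly as you use them. One small correction to your converse step: a decreasing net with infimum $0$ need not be order bounded as a whole (e.g.\ the net $(e^{-n})_{n\in\mathbb{Z}}$ in $\mathbb{R}$, indexed by $\mathbb{Z}$, decreases to $0$ but is unbounded above); only a tail $\{x_{\alpha}:\alpha\ge\alpha_{0}\}\subseteq[0,x_{\alpha_{0}}]$ is order bounded, which suffices since convergence to $0$ is a tail property --- this is precisely the ``passing to a tail'' in the paper's proof.
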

\begin{proof}
If $x_{\alpha}\downarrow 0$ then, passing to a tail, $(x_{\alpha})$ is order bounded. Therefore, $x_{\alpha}\xrightarrow{\tau}0\Leftrightarrow x_{\alpha}\xrightarrow{u\tau}0$. The sequential proof is similar.
\end{proof}
\section{The $uo$-Lebesgue property and universal completions}
Throughout this section, as usual, $X$ is a vector lattice and all topologies are assumed locally solid. We first deal with the deep connection between universal completions, unbounded topologies and $uo$-convergence. Recall Theorem 7.54 in [\ref{705}]:

\begin{theorem}\label{999} For a vector lattice $X$ we have the following:
\begin{enumerate}
\item $X$ can admit at most one Hausdorff Lebesgue topology that extends to its universal completion as a locally solid topology;
\item $X$ admits a Hausdorff Lebesgue topology if and only if $X^u$ does.
\end{enumerate}
\end{theorem}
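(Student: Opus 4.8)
The plan is to handle the ``if'' half of part (ii) by a short order-density argument, and to reduce the ``only if'' half of (ii) together with all of part (i) to the structure theory of the universally complete vector lattice $X^u$; the latter is where all the work lies.

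For the ``if'' direction of (ii), suppose $\sigma$ is a Hausdorff Lebesgue topology on $X^u$ and consider $\sigma|_X$. Subspace topologies of locally solid topologies are locally solid and Hausdorffness restricts, so only the Lebesgue property is in question, and for that it suffices to look at nets $x_\alpha\downarrow 0$ in $X$. Since $X$ is order dense in $X^u$, such a net still satisfies $x_\alpha\downarrow 0$ in $X^u$: if $0\le y\le x_\alpha$ for all $\alpha$ with $y\in X^u$ and $y>0$, order density would give some $x\in X$ with $0<x\le y\le x_\alpha$, contradicting $\inf_X x_\alpha=0$. Hence $x_\alpha\xrightarrow{\sigma}0$, so $x_\alpha\xrightarrow{\sigma|_X}0$.

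For the ``only if'' direction of (ii), start from a Hausdorff Lebesgue topology $\tau$ on $X$ and first reduce to the Dedekind complete case: extend $\tau$ to a locally solid topology $\tau^*$ on the Dedekind completion $X^\delta$ (the construction recalled just before Corollary~\ref{18.1}), note that $X$ is order dense and majorizing in $X^\delta$ so that $\tau^*$ is again Hausdorff and Lebesgue, and observe $(X^\delta)^u=X^u$; so assume $X$ is Dedekind complete. Now fix a maximal disjoint system $(e_\gamma)$ in $X_+$; it is also maximal in $(X^u)_+$, and the band projections in the Dedekind complete $X^u$ laterally split $X^u$ as the (unrestricted) product $\prod_\gamma B_\gamma$ of the bands generated by the $e_\gamma$. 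Each $B_\gamma$ is the universal completion of the band $B_\gamma\cap X$ of $X$, which is Dedekind complete, has weak unit $e_\gamma$, and carries the induced Hausdorff Lebesgue topology. The crux is to represent each $B_\gamma$: passing to the topological completion of $(B_\gamma\cap X,\tau|)$, which is Dedekind complete and Lebesgue, one extracts a family of normal measures that realizes $B_\gamma$ as some $L^0(\mu_\gamma)$ carrying the Lebesgue topology of convergence in measure. Then $X^u=\prod_\gamma L^0(\mu_\gamma)$ with the product topology is a Hausdorff Lebesgue topology, since products of Hausdorff Lebesgue topologies are Hausdorff Lebesgue.

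For part (i), let $\tau_1,\tau_2$ be Hausdorff Lebesgue topologies on $X$ that extend to locally solid topologies $\bar\tau_1,\bar\tau_2$ on $X^u$. The order-density argument of the second paragraph (applied to $X\subseteq X^u$) shows each $\bar\tau_j$ is Hausdorff, and the standard theory of locally solid extensions shows each $\bar\tau_j$ is Lebesgue; so it is enough to know that a universally complete vector lattice carries at most one Hausdorff Lebesgue topology. Granting this, $\bar\tau_1=\bar\tau_2$, whence $\tau_1=\bar\tau_1|_X=\bar\tau_2|_X=\tau_2$. The uniqueness is again structural: in the representation $X^u=\prod_\gamma L^0(\mu_\gamma)$, any Hausdorff Lebesgue topology must restrict on each factor to convergence in measure (a decreasing order-null net converges in measure, and solidity together with Hausdorffness forces the converse), so the topology is pinned down — or, in the language developed later in this paper, a Hausdorff Lebesgue topology on the universally complete $X^u$ is forced to be unbounded and to coincide with the minimal topology. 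I expect the main obstacle to be precisely this structural input — the realization of Dedekind complete Lebesgue spaces (with a weak unit) via $L^0$-spaces, and the rigidity of convergence in measure — which is a form of Maharam's classification theorem together with the theory of hyperstonian spaces; everything else is routine order-density bookkeeping.
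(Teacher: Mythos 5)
The paper does not prove this statement at all: it is recalled verbatim from Aliprantis--Burkinshaw (Theorem 7.54 of [\ref{705}]), whose proof runs through the extension machinery of Theorems 7.49--7.53 (completeness of Hausdorff Fatou topologies on universally complete spaces, uniqueness of Hausdorff Fatou topologies there, and the dominable-set characterization of when a Lebesgue topology extends to $X^u$). Your restriction argument for the ``if'' half of (ii) is fine, and the reduction to uniqueness of Hausdorff Lebesgue topologies on $X^u$ in (i) is the right shape. The fatal problem is the structural input you rely on: the claim that each band $B_\gamma$ (equivalently, a universally complete vector lattice with weak unit whose order-dense part carries a Hausdorff Lebesgue topology) can be realized as $L^0(\mu_\gamma)$ with convergence in measure. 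Admitting a Hausdorff Lebesgue topology does not produce any normal measures --- a Dedekind complete Lebesgue space may have trivial order continuous dual --- and, worse, the implication is false: take Talagrand's Maharam algebra, a complete Boolean algebra carrying a strictly positive order continuous submeasure but no measure. Its associated universally complete vector lattice has a weak unit and the submeasure topology on it is metrizable, Hausdorff, locally solid and Lebesgue, yet the space is not lattice isomorphic to any $L^0(\mu)$, since its band algebra is ccc and carries no strictly positive countably additive measure. So the step you yourself flag as ``Maharam's classification together with hyperstonian spaces'' is not a known theorem you can invoke; it is essentially the Control Measure Problem, and it has a negative answer. The correct proofs of (i) and of the ``only if'' half of (ii) must avoid any measure-theoretic representation, which is exactly what the cited arguments in [\ref{705}] do.

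Two secondary gaps in (i), even granting a representation: the hypothesis there is only that $\tau_j$ extends to $X^u$ as a \emph{locally solid} topology, and your assertion that ``standard theory'' makes the extension Lebesgue is precisely the nontrivial equivalence (i)$\Leftrightarrow$(ii) of Theorem~\ref{24.1} (Theorem 7.51 of [\ref{705}]), not a routine fact; and the rigidity claim that any Hausdorff Lebesgue topology on $L^0(\mu)$ must coincide with convergence in measure is asserted in one clause (``solidity together with Hausdorffness forces the converse'') but is itself the universally-complete uniqueness theorem (Theorem 7.53 of [\ref{705}]) that you were trying to establish. By contrast, the Hausdorffness of a locally solid extension to a space in which $X$ is order dense is indeed an easy solidity-plus-order-density argument, and your product decomposition of $X^u$ over a maximal disjoint system and the stability of the Hausdorff Lebesgue property under products are both correct.
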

We can now add an eighth and nineth equivalence to Theorem $7.51$ in [\ref{705}]. For convenience of the reader, and since we will need nearly all these properties, we recall the entire theorem.  We remark that dominable sets will not play a role in this paper, and a locally solid topology is \textbf{\textit{$\sigma$-Fatou}} if it has a base $\{U_i\}$ at zero consisting of solid sets with the property that $(x_n)\subseteq U_i$ and $0\leq x_n\uparrow x$ implies $x\in U_i$.
\begin{theorem}\label{24.1} For a Hausdorff locally solid vector lattice $(X,\tau)$ with the Lebesgue property the following statements are equivalent.
\begin{enumerate}
\item $\tau$ extends to a Lebesgue topology on $X^u$;
\item $\tau$ extends to a locally solid topology on $X^u$;
\item $\tau$ is coarser than any Hausdorff $\sigma$-Fatou topology on $X$;
\item Every dominable subset of $X_+$ is $\tau$-bounded;
\item Every disjoint sequence of $X_+$ is $\tau$-convergent to zero;
\item Every disjoint sequence of $X_+$ is $\tau$-bounded;
\item The topological completion $\widehat{X}$ of $(X,\tau)$ is Riesz isomorphic to $X^u$, that is, $\widehat{X}$ is the universal completion of $X$;
\item Every disjoint net of $X_+$ is $\tau$-convergent to zero;
\item $\tau$ is unbounded.
\end{enumerate}
\end{theorem}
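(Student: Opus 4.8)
The plan is to treat the equivalence of (i)--(vii) as already supplied by [\ref{705}] Theorem 7.51 and to splice in the two new statements by anchoring each of them to statement (v), that every disjoint sequence of $X_+$ is $\tau$-null. The equivalence of (v) and (viii) is purely combinatorial: (viii)$\Rightarrow$(v) is immediate since a sequence is a net, and for (v)$\Rightarrow$(viii) I would argue by contradiction --- if a disjoint net $(x_\alpha)$ in $X_+$ fails to be $\tau$-null, fix a solid $\tau$-neighbourhood $U$ of zero such that the net has no tail inside $U$, and, using that the index set has no maximal element, extract indices $\alpha_1<\alpha_2<\cdots$ with $x_{\alpha_k}\notin U$, producing a disjoint sequence in $X_+$ that is not $\tau$-null. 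This is exactly the extraction already performed in the proof of Theorem~\ref{23}.

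For (ix)$\Rightarrow$(v): since $\tau$ is Lebesgue, Proposition~\ref{7} shows $u\tau$ is $uo$-Lebesgue, so the hypothesis $\tau=u\tau$ makes $\tau$ itself $uo$-Lebesgue; as every disjoint sequence of $X_+$ is $uo$-null by [\ref{708}] Corollary 3.6, it is then $\tau$-null, which is (v).

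The substantive implication is (v)$\Rightarrow$(ix). By Theorem~\ref{1} and Proposition~\ref{7}, $u\tau$ is again a Hausdorff Lebesgue topology on $X$, and since $\tau$-convergence implies $u\tau$-convergence (Proposition~\ref{8}), (v) transfers verbatim to $u\tau$: every disjoint sequence of $X_+$ is $u\tau$-null. Applying the already-established equivalence (ii)$\Leftrightarrow$(v) of [\ref{705}] Theorem 7.51 --- once to $(X,\tau)$ and once to $(X,u\tau)$ --- both $\tau$ and $u\tau$ extend to locally solid topologies on the universal completion $X^u$. Now Theorem~\ref{999}(i) says a vector lattice carries at most one Hausdorff Lebesgue topology that so extends, whence $\tau=u\tau$; that is, $\tau$ is unbounded.

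I expect the only non-bookkeeping point to be recognising that one should \emph{not} attempt to build an extension of $u\tau$ to $X^u$ directly --- which would amount to re-proving a chunk of Theorem 7.51 --- but rather verify the cheap hypotheses (Hausdorff, Lebesgue, extendable) for $u\tau$ and then let the uniqueness clause of Theorem~\ref{999}(i) collapse $u\tau$ onto $\tau$. Everything else is an appeal to results already in hand, so once this observation is made the argument is short.
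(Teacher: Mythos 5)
Your proposal is correct and takes essentially the same approach as the paper: (v)$\Leftrightarrow$(viii) is the extraction argument of Theorem~\ref{23}, and the key implication (v)$\Rightarrow$(ix) proceeds, exactly as in the paper, by observing that $\tau$ and $u\tau$ are both Hausdorff Lebesgue topologies on $X$ that extend to $X^u$ (via the already-known equivalence with (ii)) and then invoking the uniqueness clause of Theorem~\ref{999} to force $\tau=u\tau$. The only harmless deviation is in (ix)$\Rightarrow$(v), where the paper argues via the pre-Lebesgue property and Corollary~\ref{1000}, while you use that $u\tau=\tau$ is $uo$-Lebesgue (Proposition~\ref{7}) together with the $uo$-nullity of disjoint sequences --- the very argument the paper itself employs later in Theorem~\ref{25}.
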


\begin{proof}
The proof that (v)$\Leftrightarrow $(viii) is the same technique as in Theorem~\ref{23}.
\\

We now prove that (v)$\Leftrightarrow$(ix). Assume $\tau$ is unbounded and is a Hausdorff Lebesgue topology. Since $\tau$ is Hausdorff, $X$ is Archimedean. Since $X$ is Archimedean and $\tau$ is Lebesgue, $\tau$ is pre-Lebesgue. Now apply Corollary~\ref{1000}.
\\

Now assume (v) holds so that every disjoint sequence of $X_+$ is $\tau$-convergent to zero. Since $\tau$ is Hausdorff and Lebesgue, so is $u\tau$. Since $\tau$-convergence implies $u\tau$-convergence, every disjoint positive sequence is $u\tau$-convergent to zero so that, by (ii), $u\tau$ extends to a locally solid topology on $X^u$. We conclude that $\tau$ and $u\tau$ are both Hausdorff Lebesgue topologies that extend to $X^u$ as locally solid topologies. By Theorem~\ref{999}, $\tau=u\tau$.
\end{proof}
Theorem~\ref{24.1}(vii) yields the following:
\begin{corollary}
Let $\tau$ be an unbounded Hausdorff Lebesgue topology on a vector lattice $X$. $\tau$ is complete if and only if $X$ is universally complete.
\end{corollary}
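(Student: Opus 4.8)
The plan is to read the corollary straight off the equivalence (vii)$\Leftrightarrow$(ix) in Theorem~\ref{24.1}. The hypotheses here are exactly the standing hypotheses of that theorem: $(X,\tau)$ is Hausdorff, locally solid and Lebesgue, and it additionally satisfies (ix), namely $\tau$ is unbounded. Hence (vii) holds as well, which says that the topological completion $\widehat{X}$ of $(X,\tau)$ is Riesz isomorphic to the universal completion $X^u$ of $X$; moreover the isomorphism is the one extending the canonical embedding $X\hookrightarrow X^u$. This identification $\widehat{X}\cong X^u$ is the only nontrivial input, and it has already been established.

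Granting this, I would then invoke the elementary fact that a Hausdorff topological vector space is complete if and only if the dense embedding $X\hookrightarrow\widehat{X}$ into its completion is surjective, i.e.\ $X=\widehat{X}$. Combining with $\widehat{X}=X^u$, one gets: $\tau$ is complete $\iff$ $X=X^u$ (as sublattices of $X^u$) $\iff$ $X$ is universally complete. That settles both implications simultaneously. If one prefers to spell the two directions out separately: if $\tau$ is complete then $X=\widehat{X}=X^u$, so $X$ is universally complete; conversely, if $X=X^u$ then $\widehat{X}=X^u=X$, so $X$ is already $\tau$-complete.

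I do not anticipate a genuine obstacle, since essentially all of the work is packaged in Theorem~\ref{24.1}(vii). The one point deserving a word of care is that the Riesz isomorphism $\widehat{X}\cong X^u$ must be compatible with the inclusions of $X$ into $\widehat{X}$ and into $X^u$, so that ``$X$ exhausts $\widehat{X}$'' translates into ``$X=X^u$''. This compatibility is built into the constructions of the topological completion and of the universal completion, so it requires no extra argument.
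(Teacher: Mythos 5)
Your proof is correct and takes essentially the same route as the paper, which deduces the corollary directly from the equivalence of (vii) and (ix) in Theorem~\ref{24.1}; your remarks about the isomorphism $\widehat{X}\cong X^u$ being compatible with the embeddings of $X$ merely make explicit what the paper leaves implicit.
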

\begin{remark} Compare this with [\ref{706}] Proposition 6.2 and [\ref{705}] Theorem 7.47. It can also be deduced that if $\tau$ is a topologically complete unbounded Hausdorff Lebesgue topology then it is the only Hausdorff Fatou topology on $X$. See Theorem 7.53 of [\ref{705}].
\end{remark}

By Exercise 5 on page 72 of [\ref{705}], if an unbounded Hausdorff Lebesgue topology $\tau$ is extended to a Lebesgue topology $\tau^u$ on $X^u$, then $\tau^u$ is also Hausdorff. By Theorem 7.53 of [\ref{705}] it is the only Hausdorff Lebesgue (even Fatou) topology $X^u$ can admit. It must therefore be unbounded. By the uniqueness of Hausdorff Lebesgue topologies on $X^u$ we deduce uniqueness of unbounded Hausdorff Lebesgue topologies on $X$ since these types of topologies always extend to $X^u$.
\\

We summarize in a theorem:
\begin{theorem}\label{24}
Let $X$ be a vector lattice. We have the following:
\begin{enumerate}
\item $X$ admits at most one unbounded Hausdorff Lebesgue topology. (It admits an unbounded Hausdorff Lebesgue topology if and only if it admits a Hausdorff Lebesgue topology);
\item Let $\tau$ be a Hausdorff Lebesgue topology on $X$. $\tau$ is unbounded if and only if $\tau$ extends to a locally solid topology on $X^u$. In this case, the extension of $\tau$ to $X^u$ can be chosen to be Hausdorff, Lebesgue and unbounded.
\end{enumerate}
\end{theorem}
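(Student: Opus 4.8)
The plan is to read both statements off Theorems~\ref{999} and \ref{24.1}, after first recording the elementary fact that $u\sigma$ inherits from a locally solid $\sigma$ the property of being Hausdorff (Theorem~\ref{1}) and Lebesgue (Proposition~\ref{7}), and is automatically unbounded since the operator $\sigma\mapsto u\sigma$ is idempotent.

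First I would settle the parenthetical in (i). If $X$ carries any Hausdorff Lebesgue topology $\tau_0$, then $u\tau_0$ is Hausdorff by Theorem~\ref{1}, Lebesgue by Proposition~\ref{7}, and unbounded by idempotency of $\sigma\mapsto u\sigma$; conversely an unbounded Hausdorff Lebesgue topology is in particular a Hausdorff Lebesgue topology. Hence $X$ admits an unbounded Hausdorff Lebesgue topology precisely when it admits a Hausdorff Lebesgue topology.

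Next, part (ii). The asserted equivalence ``$\tau$ unbounded $\iff$ $\tau$ extends to a locally solid topology on $X^u$'' is exactly the equivalence (ii)$\Leftrightarrow$(ix) of Theorem~\ref{24.1}. For the refinement, assume $\tau$ is unbounded; then (ix)$\Rightarrow$(i) of Theorem~\ref{24.1} gives a Lebesgue topology $\tau^u$ on $X^u$ restricting to $\tau$ on $X$, and Exercise~5 on page~72 of [\ref{705}] shows such a Lebesgue extension is automatically Hausdorff. It remains to see $\tau^u$ is unbounded: by the first paragraph $u(\tau^u)$ is again a Hausdorff Lebesgue topology on $X^u$, and since $(X^u)^u=X^u$, Theorem~\ref{999}(i) (equivalently Theorem~7.53 of [\ref{705}]) says $X^u$ admits at most one Hausdorff Lebesgue topology; therefore $u(\tau^u)=\tau^u$, i.e. $\tau^u$ is unbounded, Hausdorff and Lebesgue.

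Finally, uniqueness in (i): given two unbounded Hausdorff Lebesgue topologies $\tau_1,\tau_2$ on $X$, part (ii) produces Hausdorff Lebesgue extensions $\tau_1^u,\tau_2^u$ to $X^u$; by the uniqueness just invoked $\tau_1^u=\tau_2^u$, and restricting to $X$ forces $\tau_1=\tau_2$. I do not expect a genuine obstacle, since all the substance is already carried by Theorems~\ref{999} and \ref{24.1}; the only points requiring a moment's care are the bookkeeping ones — that ``$\sigma$ extends $\tau$'' is read as ``$\sigma|_X=\tau$'', so that equality of extensions on $X^u$ descends to equality on $X$, and that Theorem~\ref{999}(i) genuinely applies to $X^u$ because its universal completion is itself.
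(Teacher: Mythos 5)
Your proposal is correct and follows essentially the same route as the paper: the equivalence in (ii) is read off Theorem~\ref{24.1} ((ii)$\Leftrightarrow$(ix)), the extension is made Hausdorff via Exercise~5 on page~72 of [\ref{705}] and unbounded via uniqueness of Hausdorff Lebesgue topologies on $X^u$ (the paper cites Theorem~7.53 of [\ref{705}] where you apply Theorem~\ref{999}(i) with $(X^u)^u=X^u$, an equivalent step), and uniqueness in (i) is obtained by extending and restricting exactly as in the paper's preceding paragraph.
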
 

\begin{example}
Let $X$ be an order continuous Banach lattice. Both the norm and $un$ topologies are Hausdorff and Lebesgue. Since these topologies generally differ, it is clear that a space can admit more than one Hausdorff Lebesgue topology. Notice, however, that when $X$ is order continuous, $un$ is the same as $uaw$. The reason for this is that $un$ and $uaw$ are two unbounded Hausdorff Lebesgue topologies, so, by the theory just presented, they must coincide.
\end{example}

Recall that every Lebesgue topology is Fatou; this is Lemma 4.2 of [\ref{705}]. Also, if $\tau$ is a Hausdorff Fatou topology on a universally complete vector lattice $X$ then $(X,\tau)$ is $\tau$-complete. This is Theorem 7.50 of [\ref{705}] and can also be deduced from previous facts presented in this paper.
\begin{corollary}\label{24.2}
Suppose $(X,\tau)$ is Hausdorff and Lebesgue. Then $u\tau$ extends to an unbounded Hausdorff Lebesgue topology $(u\tau)^u$ on $X^u$ and $(X^u,(u\tau)^u)$ is topologically complete.
\end{corollary}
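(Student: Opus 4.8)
The plan is to assemble this from Theorem~\ref{24} together with the standard fact that a Hausdorff Fatou topology on a universally complete vector lattice is complete. The first thing I would record is that $u\tau$ is itself an \emph{unbounded Hausdorff Lebesgue} topology on $X$: it is Hausdorff and locally solid by Theorem~\ref{1}, it is Lebesgue by Proposition~\ref{7}, and it is unbounded because $u(u\tau)=u\tau$ (the remark preceding Definition~\ref{19}). Note also that $X$ is Archimedean---this is the standing assumption, and in any case it follows from Corollary~\ref{1.2}(iii)---so the universal completion $X^u$ is well defined and unambiguous.

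Next I would apply Theorem~\ref{24}(ii) with $u\tau$ in the role of $\tau$. Since $u\tau$ is a Hausdorff Lebesgue topology which is unbounded, that theorem gives that $u\tau$ extends to a locally solid topology on $X^u$, and moreover the extension may be chosen Hausdorff, Lebesgue, and unbounded; denote it $(u\tau)^u$. This already yields the first assertion of the corollary.

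It then remains only to obtain topological completeness. Since $(u\tau)^u$ is Lebesgue it is Fatou (Lemma 4.2 of [\ref{705}]), it is Hausdorff, and it lives on the universally complete vector lattice $X^u$; by Theorem 7.50 of [\ref{705}] (the fact recalled just before this corollary), a Hausdorff Fatou topology on a universally complete vector lattice is topologically complete, so $(X^u,(u\tau)^u)$ is complete, as claimed.

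There is no genuine obstacle here: every nontrivial ingredient has already been established, so the only points requiring care are checking that $u\tau$ really satisfies the hypotheses of Theorem~\ref{24}(ii)---in particular that it is unbounded---and invoking the completeness statement for Fatou topologies on universally complete spaces in the right form. If a more self-contained route were preferred, one could instead run the argument of Theorem~\ref{24.1} directly for $u\tau$, using that disjoint positive nets are $u\tau$-null (Theorem~\ref{23}) to obtain the extension to $X^u$ via Theorem~\ref{24.1}(ii), but routing through Theorem~\ref{24} is cleaner.
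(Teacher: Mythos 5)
Your argument is correct and is exactly the route the paper intends: the corollary is stated without proof precisely because it follows from Theorem~\ref{24}(ii) applied to $u\tau$ (which is Hausdorff, Lebesgue, and unbounded) together with the facts recalled immediately beforehand, namely that Lebesgue implies Fatou and that a Hausdorff Fatou topology on a universally complete vector lattice is topologically complete. Your verification of the hypotheses for $u\tau$ and your invocation of [\ref{705}] Theorem 7.50 match the paper's implicit reasoning step for step.
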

\begin{example}Recall by Theorem 6.4 of [\ref{705}] that if $X$ is a vector lattice and $A$ an ideal of $X^{\sim}$ then the absolute weak topology $\lvert\sigma\rvert(A,X)$ is a Hausdorff Lebesgue topology on $A$. This means that the topology $u\lvert \sigma\rvert(A,X)$ is the unique unbounded Hausdorff Lebesgue topology on $A$. In particular, if $X$ is a Banach lattice then $u\lvert \sigma\rvert(X^*,X)$ is the unique unbounded Hausdorff Lebesgue topology on $X^*$ so, if $X^*$ is (norm) order continuous, then $un=uaw=u\lvert\sigma\rvert(X^*,X)$ on $X^*$.
\end{example}
We next characterize the $uo$-Lebesgue property:
\begin{theorem}\label{25}
Let $(X,\tau)$ be a Hausdorff locally solid vector lattice. TFAE:
\begin{enumerate}
\item $\tau$ is $uo$-Lebesgue, i.e., $x_{\alpha}\xrightarrow{uo}0 \Rightarrow x_{\alpha}\xrightarrow{\tau}0$;
\item $\tau$ is Lebesgue and unbounded.
\end{enumerate}
\end{theorem}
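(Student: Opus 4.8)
The plan is to obtain both implications with essentially no new work, by assembling Proposition~\ref{7} and Theorem~\ref{24.1}; the genuine content is already packaged there, via the universal-completion machinery behind Theorem~\ref{999}.

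For (ii)$\Rightarrow$(i), assume $\tau$ is Lebesgue and unbounded. Proposition~\ref{7} tells us that whenever $\tau$ is Lebesgue, its unbounded modification $u\tau$ is $uo$-Lebesgue. Since $\tau$ is unbounded, Definition~\ref{19} gives $\tau=u\tau$, so $\tau$ itself is $uo$-Lebesgue. No computation is required.

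For (i)$\Rightarrow$(ii), suppose $\tau$ is $uo$-Lebesgue. It is immediate that $\tau$ is Lebesgue, since $o$-null nets are $uo$-null (as noted just before Example~\ref{6}). It remains to see that $\tau$ is unbounded, and here I would invoke the equivalence (v)$\Leftrightarrow$(ix) of Theorem~\ref{24.1}, which applies because $(X,\tau)$ is Hausdorff, locally solid, and (as just observed) Lebesgue. Every disjoint sequence of $X$ — in particular every disjoint sequence of $X_+$ — is $uo$-null by Corollary~3.6 of [\ref{708}], hence $\tau$-null by the $uo$-Lebesgue hypothesis. Thus condition (v) of Theorem~\ref{24.1} holds, so condition (ix) does as well, i.e.\ $\tau=u\tau$.

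There is no real obstacle internal to this argument; the weight rests entirely on Theorem~\ref{24.1}, whose proof of (v)$\Leftrightarrow$(ix) runs through the observation that $\tau$ and $u\tau$ are then both Hausdorff Lebesgue topologies extending to the universal completion, and hence coincide by Theorem~\ref{999}. If one wanted a proof independent of Theorem~\ref{24.1}, the hard part would be precisely to reprove that uniqueness statement: to show directly that a Hausdorff Lebesgue topology all of whose disjoint sequences are $\tau$-null is necessarily unbounded.
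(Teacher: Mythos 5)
Your proposal is correct and follows essentially the same route as the paper: (ii)$\Rightarrow$(i) via Proposition~\ref{7} together with $\tau=u\tau$, and (i)$\Rightarrow$(ii) by noting $\tau$ is Lebesgue, observing that positive disjoint sequences are $uo$-null and hence $\tau$-null, and then invoking the equivalence (v)$\Leftrightarrow$(ix) of Theorem~\ref{24.1}. The paper's proof is the same argument, so there is nothing to add.
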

\begin{proof}
Recall that the Lebesgue property is independent of the definition of order convergence. 
\\

(ii)$\Rightarrow$(i) is known since if $\tau$ is Lebesgue then $u\tau$ is $uo$-Lebesgue and, therefore, since $\tau=u\tau$, $\tau$ is $uo$-Lebesgue.
\\

Assume now that $\tau$ is $uo$-Lebesgue. Note first that this trivially implies that $\tau$ is Lebesgue. Assume now that $(x_n)$ is a disjoint sequence in $X_+$. By known results, $(x_n)$ is $uo$-null. Since $\tau$ is $uo$-Lebesgue, $(x_n)$ is $\tau$-null. Therefore, $\tau$ satisfies condition (v) of Theorem~\ref{24.1}. It therefore also satisfies (ix) which means $\tau$ is unbounded.
\end{proof}
\begin{question}
Does the above theorem remain valid if $uo$ is replaced by $uo_1$? In other words, is the $uo$-Lebesgue property independent of the definition of order convergence?
\end{question} 

There are two natural ways to incorporate unboundedness into the literature on locally solid vector lattices. The first is to take some property relating order convergence to topology and then make the additional assumption that the topology is unbounded. The other is to take said property and replace order convergence with $uo$-convergence. For the Lebesgue property, these approaches are equivalent: $\tau$ is unbounded and Lebesgue iff it is $uo$-Lebesgue (with the overlying assumption $\tau$ is Hausdorff). Later on we will study the Fatou property and see that these approaches differ.
\\

We can now strengthen Proposition~\ref{4}. Compare this with Theorem 4.20 and 4.22 in [\ref{705}]. The latter theorem says that all Hausdorff Lebesgue topologies induce the same topology on order bounded subsets. It will now be shown that, furthermore, all Hausdorff Lebesgue topologies have the same topologically closed sublattices. 
\begin{theorem}\label{26}
Let $\tau$ and $\sigma$ be Hausdorff Lebesgue topologies on a vector lattice $X$ and let $Y$ be a sublattice of $X$. Then $Y$ is $\tau$-closed in $X$ if and only if it $\sigma$-closed in $X$.
\end{theorem}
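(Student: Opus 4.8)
The plan is to reduce the statement about two arbitrary Hausdorff Lebesgue topologies to a statement comparing each with a canonical ``largest'' unbounded Hausdorff Lebesgue topology, and then apply Proposition~\ref{4}. First I would note that, by Proposition~\ref{7} together with Theorem~\ref{25}, both $u\tau$ and $u\sigma$ are $uo$-Lebesgue topologies on $X$; being $uo$-Lebesgue they are in particular unbounded Hausdorff Lebesgue topologies. By the uniqueness statement in Theorem~\ref{24}(i), $X$ admits at most one unbounded Hausdorff Lebesgue topology, hence $u\tau = u\sigma$. Call this common topology $\rho$.

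The key point is then that a sublattice $Y$ of $X$ is $\tau$-closed if and only if it is $\rho$-closed. This is exactly Proposition~\ref{4} applied to the Hausdorff locally solid topology $\tau$: it says $Y$ is $u\tau$-closed iff it is $\tau$-closed, and $u\tau = \rho$. By the same token, $Y$ is $\sigma$-closed iff it is $u\sigma$-closed iff it is $\rho$-closed. Chaining these equivalences gives: $Y$ is $\tau$-closed $\iff$ $Y$ is $\rho$-closed $\iff$ $Y$ is $\sigma$-closed, which is the assertion of the theorem. The sequential version follows identically from the sequential part of Proposition~\ref{4}, should it be wanted.

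The only subtlety I anticipate is making sure Proposition~\ref{4} genuinely applies: it requires $\tau$ (resp.\ $\sigma$) to be Hausdorff and locally solid, which is given, and it concludes the equivalence of $\tau$-closedness and $u\tau$-closedness for sublattices with no extra hypothesis — so there is no gap there. The one thing to double-check is that $u\tau$ really is Hausdorff (needed implicitly when we invoke Theorem~\ref{24} and Theorem~\ref{25}, both of which presuppose a Hausdorff topology); but this is immediate from Theorem~\ref{1}, since $\tau$ Hausdorff implies $u\tau$ Hausdorff. Thus the main obstacle is essentially bookkeeping: assembling Proposition~\ref{7}, Theorem~\ref{25}, Theorem~\ref{24}(i) and Proposition~\ref{4} in the right order, with no hard new estimate required.
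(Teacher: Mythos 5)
Your proposal is correct and follows essentially the same route as the paper: apply Proposition~\ref{4} to identify $\tau$-closedness with $u\tau$-closedness (and likewise for $\sigma$), then use the uniqueness of the unbounded Hausdorff Lebesgue topology from Theorem~\ref{24} to conclude $u\tau=u\sigma$. The extra bookkeeping you include (Proposition~\ref{7}, Theorem~\ref{25}, Hausdorffness of $u\tau$ via Theorem~\ref{1}) is just an unpacking of why Theorem~\ref{24} applies, so there is no substantive difference.
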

\begin{proof}
By Proposition~\ref{4}, $Y$ is $\tau$-closed in $X$ if and only if it is $u\tau$-closed in $X$ and it is $\sigma$-closed in $X$ if and only if it is $u\sigma$-closed in $X$. By Theorem~\ref{24}, $u\tau=u\sigma$, so $Y$ is $\tau$-closed in $X$ if and only if it $\sigma$-closed in $X$.
\end{proof}

Next we present a partial answer to the question of whether unbounding and passing to the order completion is the same as passing to the order completion and then unbounding. First, a proposition:
\begin{proposition}\label{22222}
Let $(X,\tau)$ be a Hausdorff locally solid vector lattice with the $uo$-Lebesgue property. Then $Y$ is a regular sublattice of $X$ iff $\tau|_Y$ is a Hausdorff $uo$-Lebesgue topology on $Y$.
\end{proposition}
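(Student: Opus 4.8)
The plan is to prove both implications by exploiting the characterization of the $uo$-Lebesgue property from Theorem~\ref{25}: a Hausdorff locally solid topology is $uo$-Lebesgue precisely when it is Lebesgue and unbounded. So the task reduces to understanding when $\tau|_Y$ is Lebesgue and when it is unbounded, and relating both to regularity of $Y$ in $X$. Recall that $Y$ is a \emph{regular} sublattice of $X$ means that the inclusion preserves arbitrary infima that exist, equivalently that $y_\alpha\downarrow 0$ in $Y$ implies $y_\alpha\downarrow 0$ in $X$; a standard fact is that this is equivalent to $uo$-convergence in $Y$ agreeing with $uo$-convergence in $X$ for nets in $Y$ (this is in [\ref{708}]).

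For the forward direction, assume $Y$ is regular in $X$. Since $\tau$ is Hausdorff so is $\tau|_Y$. To see $\tau|_Y$ is $uo$-Lebesgue directly: if $(y_\alpha)\subseteq Y$ and $y_\alpha\xrightarrow{uo}0$ in $Y$, then by regularity $y_\alpha\xrightarrow{uo}0$ in $X$, so since $\tau$ is $uo$-Lebesgue on $X$ we get $y_\alpha\xrightarrow{\tau}0$, i.e. $y_\alpha\xrightarrow{\tau|_Y}0$ in $Y$. Thus $\tau|_Y$ is $uo$-Lebesgue on $Y$. This direction is short and uses only the definition of regularity through the $uo$-convergence reformulation.

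For the converse, assume $\tau|_Y$ is a Hausdorff $uo$-Lebesgue topology on $Y$; we must show $Y$ is regular in $X$. Take a net $y_\alpha\downarrow 0$ in $Y$ and let $x\in X$ be a lower bound for $(y_\alpha)$ in $X$; we must show $x\le 0$, equivalently $x^+=0$. The idea is that $y_\alpha\downarrow 0$ in $Y$ forces $y_\alpha\xrightarrow{\tau|_Y}0$ (since $uo$-Lebesgue implies Lebesgue, and for a decreasing net $uo$-null coincides with order-null, which follows from $y_\alpha\downarrow 0$), hence $y_\alpha\xrightarrow{\tau}0$ in $X$. Now $0\le x^+\le y_\alpha$ for all $\alpha$ (using $x\le y_\alpha$ and $y_\alpha\ge 0$), so by solidity of $\tau$-neighborhoods $x^+$ lies in every $\tau$-neighborhood of zero, and since $\tau$ is Hausdorff, $x^+=0$. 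Therefore $\inf_X y_\alpha=0$, proving regularity. I expect the main subtlety here to be the bookkeeping around which definition of order convergence is in force and the verification that "$uo$-null $+$ decreasing $\Rightarrow$ order-null" so that the Lebesgue property (which is independent of the definition of order convergence) can legitimately be applied; everything else is a routine solidity-plus-Hausdorff squeeze argument.
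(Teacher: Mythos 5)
Your proposal is correct and follows essentially the same route as the paper: the forward direction is identical (regularity transfers $uo$-convergence from $Y$ to $X$, then apply the $uo$-Lebesgue property of $\tau$), and the converse likewise takes $y_\alpha\downarrow 0$ in $Y$, gets $y_\alpha\xrightarrow{\tau}0$, and concludes $y_\alpha\downarrow 0$ in $X$. The only difference is cosmetic: where the paper cites [\ref{705}] Theorem 2.21 for that last step, you reprove the relevant special case directly with the solidity-plus-Hausdorff squeeze on $x^+$, which is a valid substitute.
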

\begin{proof}
The reader should convince themselves that the subspace topology defines a Hausdorff locally solid topology on $Y$; we will prove that $\tau|_Y$ is $uo$-Lebesgue when $Y$ is regular. Suppose $(y_{\alpha})$ is a net in $Y$ and $y_{\alpha}\xrightarrow{uo}0$ in $Y$. Since $Y$ is regular, $y_{\alpha}\xrightarrow{uo}0$ in $X$, so that $y_{\alpha}\xrightarrow{\tau}0$ as $\tau$ is $uo$-Lebesgue. This is equivalent to $y_{\alpha}\xrightarrow{\tau|_Y}0$.

For the converse, assume $\tau|_Y$ is Hausdorff and $uo$-Lebesgue, and $y_{\alpha}\downarrow 0$ in $Y$. Then $y_{\alpha}\xrightarrow{\tau|_Y}0$, hence $y_{\alpha}\xrightarrow{\tau}0$. Since $(y_{\alpha})$ is decreasing in $X$, $y_{\alpha}\downarrow 0$ in $X$ be [\ref{705}] Theorem 2.21. This proves that $Y$ is regular in $X$.
\end{proof}

Let $\sigma$ be a Hausdorff Lebesgue topology on a vector lattice $X$. By Theorem $4.12$ in [\ref{705}] there is a unique Hausdorff Lebesgue topology $\sigma^{\delta}$ on $X^{\delta}$ that extends $\sigma$. We have the following:
\begin{lemma}
For any Hausdorff Lebesgue topology $\tau$ on a vector lattice $X$, $u(\tau^{\delta})=(u\tau)^{\delta}=(u\tau)^u|_{X^\delta}$.
\end{lemma}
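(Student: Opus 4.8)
The statement is a chain of three equalities: $u(\tau^{\delta})=(u\tau)^{\delta}=(u\tau)^u|_{X^\delta}$, all on the order completion $X^\delta$. I would establish the two equalities separately. For the first, $u(\tau^{\delta})=(u\tau)^{\delta}$, the key observation is that both topologies are Hausdorff Lebesgue topologies on $X^\delta$: indeed $\tau^{\delta}$ is Hausdorff Lebesgue by construction (Theorem~4.12 in [\ref{705}]), so $u(\tau^{\delta})$ is Hausdorff Lebesgue by Proposition~\ref{7} together with Theorem~\ref{1}; and since $u\tau$ is Hausdorff Lebesgue (again Proposition~\ref{7} and Theorem~\ref{1}), its Lebesgue extension $(u\tau)^{\delta}$ to $X^{\delta}$ exists and is Hausdorff Lebesgue. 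Moreover both extend $u\tau$: $(u\tau)^{\delta}$ does by definition, and $u(\tau^{\delta})$ does by Corollary~\ref{18.1} (which says $u(\tau^{\delta})|_X = u\tau$, since $\tau^\delta|_X = \tau$). So both are Hausdorff Lebesgue topologies on $X^{\delta}$ that restrict to $u\tau$ on $X$; by the uniqueness of the Lebesgue extension (Theorem~4.12 in [\ref{705}]), they coincide.

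For the second equality, $(u\tau)^{\delta}=(u\tau)^u|_{X^\delta}$, I would use Theorem~\ref{24}(2) and Corollary~\ref{24.2}. Apply Corollary~\ref{24.2} to the Hausdorff Lebesgue topology $u\tau$ (which is in fact unbounded, by Theorem~\ref{25}, but that is not even needed here): $(u\tau)^u$ is an unbounded Hausdorff Lebesgue topology on $X^u$, and $X^{\delta}$ sits inside $X^u$ as a regular sublattice. By Proposition~\ref{22222}, $(u\tau)^u|_{X^\delta}$ is then a Hausdorff $uo$-Lebesgue — in particular Hausdorff Lebesgue — topology on $X^{\delta}$. It restricts to $u\tau$ on $X$, since $(u\tau)^u|_X = u\tau$ (the extension to $X^u$ is an extension). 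So $(u\tau)^u|_{X^\delta}$ is a Hausdorff Lebesgue topology on $X^{\delta}$ extending $u\tau$; by uniqueness of the Lebesgue extension it equals $(u\tau)^{\delta}$.

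The main obstacle, and the place I would be most careful, is verifying that all the relevant extensions genuinely exist and are Hausdorff and Lebesgue, so that the uniqueness theorem (Theorem~4.12 in [\ref{705}]) actually applies: one needs $u\tau$ to be Hausdorff and Lebesgue to invoke it, which comes from combining Theorem~\ref{1} (Hausdorffness of $u\tau$ iff $\tau$) and Proposition~\ref{7} (Lebesgue passes to $u\tau$). A secondary point requiring attention is the identification $X^{\delta}$ is regular in $X^u$ — this is standard (the order completion embeds regularly in the universal completion), but it is the hypothesis that lets Proposition~\ref{22222} convert "$(u\tau)^u$ is $uo$-Lebesgue on $X^u$" into "$(u\tau)^u|_{X^\delta}$ is $uo$-Lebesgue on $X^{\delta}$." Everything else is bookkeeping with restrictions of topologies.
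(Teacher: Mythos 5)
Your argument is correct, and for the second equality $(u\tau)^{\delta}=(u\tau)^u|_{X^\delta}$ it is exactly the paper's: restrict $(u\tau)^u$ to $X^{\delta}$ (using that $X^{\delta}$ is order dense, hence regular, in $X^u$), invoke Proposition~\ref{22222} to see the restriction is a Hausdorff $uo$-Lebesgue topology extending $u\tau$, and conclude by uniqueness of Hausdorff Lebesgue extensions to the order completion. Where you genuinely diverge is the first equality. The paper does not verify directly that $u(\tau^{\delta})$ restricts to $u\tau$ on $X$; instead it notes that $u(\tau^{\delta})$ is a Hausdorff $uo$-Lebesgue topology on $X^{\delta}$, shows (via the restriction from $X^u$) that $(u\tau)^{\delta}$ is also $uo$-Lebesgue, and then concludes $u(\tau^{\delta})=(u\tau)^{\delta}$ from the uniqueness of Hausdorff $uo$-Lebesgue (equivalently, unbounded Hausdorff Lebesgue) topologies, i.e.\ Theorems~\ref{24} and~\ref{25}. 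You instead use Corollary~\ref{18.1} to identify $\bigl(u(\tau^{\delta})\bigr)|_X$ with $u\tau$ (legitimate, since $\tau^{\delta}$ is a locally solid extension of $\tau$ and a linear topology is determined by its null nets), so that both $u(\tau^{\delta})$ and $(u\tau)^{\delta}$ are Hausdorff Lebesgue extensions of $u\tau$ and coincide by the extension uniqueness alone. Your route buys a slightly more economical proof of the first equality, needing only Theorem~\ref{1}, Proposition~\ref{7}, Corollary~\ref{18.1} and the uniqueness in Theorem 4.12 of the reference, and not the uniqueness of minimal/unbounded Lebesgue topologies; the paper's route makes the $uo$-Lebesgue character of $(u\tau)^{\delta}$ explicit along the way, which is what it exploits immediately afterwards. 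Both proofs are complete, and your care about regularity of $X^{\delta}$ in $X^u$ and about which extensions are Hausdorff and Lebesgue is exactly where the care is needed.
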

\begin{proof}
As stated, $\tau$ extends uniquely to a Hausdorff Lebesgue topology $\tau^{\delta}$ on $X^{\delta}$. $u(\tau^{\delta})$ is thus a Hausdorff $uo$-Lebesgue topology on $X^{\delta}$.
\\

Alternatively, since $\tau$ is a Hausdorff Lebesgue topology, $u\tau$ is a Hausdorff $uo$-Lebesgue topology. This topology extends uniquely to a Hausdorff Lebesgue topology $(u\tau)^{\delta}$ on $X^{\delta}$. It suffices to prove that $(u\tau)^{\delta}$ is $uo$-Lebesgue since then, by uniqueness of such topologies, it must equal $u(\tau^{\delta})$. 
\\

Since $u\tau$ is Hausdorff and $uo$-Lebesgue, it also extends to a Hausdorff $uo$-Lebesgue topology $(u\tau)^u$ on $X^u$. By page 187 of [\ref{705}], the universal completions of $X$ and $X^{\delta}$ coincide so we can restrict $(u\tau)^u$ to $X^{\delta}$. By Proposition~\ref{22222}, this gives a Hausdorff $uo$-Lebesgue topology, $(u\tau)^u|_{X^\delta}$, on $X^{\delta}$ that extends $u\tau$. By uniqueness of Hausdorff Lebesgue extensions to $X^{\delta}$, $(u\tau)^u|_{X^\delta}=(u\tau)^{\delta}$ and so $(u\tau)^{\delta}$ is $uo$-Lebesgue.
\end{proof}
In particular, if $\tau$ is also unbounded, so that $\tau$ is a Hausdorff $uo$-Lebesgue topology, then $u(\tau^{\delta})=\tau^{\delta}$. This means we can also include $uo$-Lebesgue topologies in [\ref{705}] Theorem 4.12:
\begin{corollary}
Let $\tau$ be a Hausdorff Lebesgue topology on a vector lattice $X$. Then $\tau$ is $uo$-Lebesgue iff $\tau^{\delta}$ is $uo$-Lebesgue.
\end{corollary}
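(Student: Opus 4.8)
\emph{Proof proposal.} The plan is to deduce this purely from Theorem~\ref{25} and the lemma immediately preceding the statement, which together turn the $uo$-Lebesgue property into the conjunction ``Lebesgue and unbounded'' and let one pass freely between $u(\tau^{\delta})$ and $(u\tau)^{\delta}$. Throughout, note that $\tau^{\delta}$ is by construction a Hausdorff Lebesgue topology on $X^{\delta}$ (this is [\ref{705}] Theorem~4.12), and that $u\tau$ is a Hausdorff Lebesgue topology on $X$ by Theorem~\ref{1} and Proposition~\ref{7}, so that $(u\tau)^{\delta}$ is well defined.

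For the forward direction, suppose $\tau$ is $uo$-Lebesgue. By Theorem~\ref{25}, $\tau$ is Lebesgue and unbounded, i.e. $\tau=u\tau$. The lemma then gives
\begin{equation}
u(\tau^{\delta})=(u\tau)^{\delta}=\tau^{\delta},
\end{equation}
so $\tau^{\delta}$ is unbounded. Since $\tau^{\delta}$ is also a Hausdorff Lebesgue topology, Theorem~\ref{25} applies again and yields that $\tau^{\delta}$ is $uo$-Lebesgue. (This is precisely the ``in particular'' remark preceding the corollary.)

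For the converse, suppose $\tau^{\delta}$ is $uo$-Lebesgue. By Theorem~\ref{25}, $\tau^{\delta}$ is unbounded, i.e. $\tau^{\delta}=u(\tau^{\delta})$, and by the lemma $u(\tau^{\delta})=(u\tau)^{\delta}$; hence $\tau^{\delta}=(u\tau)^{\delta}$ as topologies on $X^{\delta}$. Now restrict both topologies to $X$. Since $\tau^{\delta}$ extends $\tau$ and $(u\tau)^{\delta}$ extends $u\tau$, we obtain $\tau=\tau^{\delta}|_X=(u\tau)^{\delta}|_X=u\tau$, so $\tau$ is unbounded. As $\tau$ is Lebesgue by hypothesis, Theorem~\ref{25} gives that $\tau$ is $uo$-Lebesgue, completing the proof.

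There is no substantive obstacle here: the content is entirely in the lemma (whose proof used Proposition~\ref{22222} and the coincidence of the universal completions of $X$ and $X^{\delta}$) and in the characterization Theorem~\ref{25}; the corollary itself is a short bookkeeping argument. The only point to check carefully is that restricting the extended topologies to $X$ recovers $\tau$ and $u\tau$ respectively — which is exactly what it means for $\tau^{\delta}$ and $(u\tau)^{\delta}$ to be extensions.
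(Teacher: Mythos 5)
Your proposal is correct and follows essentially the paper's intended route: the corollary is stated there as an immediate consequence of the preceding lemma $u(\tau^{\delta})=(u\tau)^{\delta}$ together with Theorem~\ref{25}'s characterization of the $uo$-Lebesgue property as ``Lebesgue and unbounded.'' Your explicit treatment of the converse (restricting $\tau^{\delta}=(u\tau)^{\delta}$ to $X$ and using that extensions restrict to the original topologies to get $\tau=u\tau$) just fills in the bookkeeping the paper leaves implicit.
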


\section{Minimal topologies}
In this section we will see that $uo$-convergence ``knows" exactly which topologies are minimal. Much work has been done on minimal topologies and, unfortunately, the section in [\ref{705}] is out-of-date both in terminology and sharpness of results. First we fix our definitions; they are inconsistent with [\ref{705}].
\begin{defn}
A Hausdorff locally solid topology $\tau$ on a vector lattice $X$ is said to be \textbf{\textit{minimal}} if it follows from $\tau_1\subseteq \tau$ and $\tau_1$ a Hausdorff locally solid topology that $\tau_1=\tau$.
\end{defn}
\begin{defn}
A Hausdorff locally solid topology $\tau$ on a vector lattice $X$ is said to be \textbf{\textit{least}} or, to be consistent with $[\ref{714}]$, \textbf{\textit{smallest}}, if $\tau$ is coarser than any other Hausdorff locally solid topology $\sigma$ on $X$, i.e., $\tau\subseteq \sigma$.
\end{defn}
A crucial result, not present in [\ref{705}], is Proposition 6.1 of [\ref{714}]:
\begin{proposition}\label{2000}
A minimal topology is a Lebesgue topology.
\end{proposition}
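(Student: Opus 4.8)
The plan is to produce a Hausdorff locally solid topology $\sigma$ on $X$ with $\sigma\subseteq\tau$ that is Lebesgue; minimality of $\tau$ then forces $\sigma=\tau$, so $\tau$ is Lebesgue. As a first reduction, recall that $u\tau\subseteq\tau$ by Proposition~\ref{8} and that $u\tau$ is a Hausdorff locally solid topology whenever $\tau$ is (Theorem~\ref{1}); since $\tau$ is minimal this already gives $\tau=u\tau$, i.e.\ $\tau$ is unbounded. So all of the content lies in showing a minimal topology is Lebesgue, and the strategy is to exhibit the coarser Lebesgue topology $\sigma$.

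The natural candidate is
\[
\sigma:=\sup\{\rho:\rho\text{ is a locally solid Lebesgue topology on }X\text{ with }\rho\subseteq\tau\}.
\]
This family is non-empty (the trivial topology belongs to it), a supremum of locally solid topologies is locally solid, and a supremum of Lebesgue topologies is Lebesgue: a net converges in $\sigma$ iff it converges in every member of the family, so if $x_{\alpha}\downarrow 0$ then $x_{\alpha}\xrightarrow{\rho}0$ for each $\rho$, whence $x_{\alpha}\xrightarrow{\sigma}0$. Moreover $\sigma\subseteq\tau$, since $\tau$ is finer than every $\rho$. Thus $\sigma$ is a locally solid Lebesgue topology coarser than $\tau$, and the whole proposition reduces to: $\sigma$ is Hausdorff.

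The Hausdorffness of $\sigma$ is precisely the assertion that the locally solid Lebesgue topologies coarser than $\tau$ jointly separate the points of $X$; equivalently, that the $\sigma$-closure $N$ of $\{0\}$ is $\{0\}$. Here $N$ is a $\tau$-closed ideal (it is an intersection of $\sigma$-neighbourhoods of $0$, each of which is a $\tau$-neighbourhood, so $N$ is $\tau$-closed, and it is an ideal since $\sigma$ is locally solid). I would attempt a contradiction: if $N\ne\{0\}$, pass to $X/N$ with the quotient topology induced by $\tau$, which is again Hausdorff and locally solid since $N$ is a $\tau$-closed ideal; one hopes the obstruction has genuinely been removed in the quotient. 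But it is not clear that this quotient is again minimal, nor that $X/N$ supplies the coarser Lebesgue topologies that were missing on $X$, so this reduction is not automatic. Ruling out a nonzero $N$ — showing that a minimal topology cannot ``hide'' a nonzero vector from every coarser Lebesgue topology — is the crux, and I expect it to require the structure theory of locally solid spaces (passage to the order or universal completion, or an analysis of bands) rather than a one-line argument. A more hands-on alternative is the contrapositive: from a net $0\le x_{\lambda}\downarrow 0$ with $x_{\lambda}\notin V_{0}$ for some solid $\tau$-neighbourhood $V_{0}$ (pass to a tail so that also $x_{\lambda}\le x_{\lambda_{0}}$), one tries to enlarge every solid $\tau$-neighbourhood just enough to order-theoretically absorb the tails of $(x_{\lambda})$, obtaining a strictly coarser locally solid topology; but naively adjoining the intervals $[-x_{\lambda},x_{\lambda}]$ fails, since $(x_{\lambda})$ need not decrease geometrically, and keeping the enlarged topology Hausdorff is again the delicate point. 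In every approach the local solidity, the Lebesgue property of the coarser topology, and the appeal to minimality are routine; the point–separation step carries the entire weight.
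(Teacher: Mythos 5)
Your proposal does not close the argument, and the gap you yourself flag is the entire content of the proposition. The reductions you carry out are the easy part: $u\tau\subseteq\tau$ and minimality giving $\tau=u\tau$ is correct but irrelevant to the Lebesgue property; and the supremum $\sigma$ of all locally solid Lebesgue topologies coarser than $\tau$ is indeed locally solid, Lebesgue, and coarser than $\tau$. But minimality only lets you conclude $\sigma=\tau$ \emph{after} you know $\sigma$ is Hausdorff, and nothing in your construction rules out the degenerate case where the trivial topology is the only locally solid Lebesgue topology coarser than $\tau$, so that $\sigma$ is trivial and the argument yields nothing. Producing a Hausdorff locally solid Lebesgue topology coarser than $\tau$ is not a lemma on the way to the proposition — it is equivalent to it (given minimality), so the scheme is essentially a restatement. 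Equivalently, the contrapositive you sketch (from a net $x_\lambda\downarrow 0$ escaping a solid neighbourhood, build a strictly coarser Hausdorff locally solid topology) is where all the work lies, and you correctly observe that the naive enlargement by order intervals fails and that keeping the coarser topology Hausdorff is delicate; but you do not supply the missing construction. Note also that minimality must enter \emph{inside} that construction, not merely at the final comparison step as in your outline: there exist vector lattices carrying Hausdorff locally solid topologies but no Hausdorff Lebesgue topology at all, so a purely formal supremum argument that never exploits minimality cannot succeed.

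For comparison: the paper does not prove this statement at all; it quotes it as Proposition 6.1 of Conradie's paper [\ref{714}] (``The coarsest Hausdorff Lebesgue topology''), precisely because the proof requires genuine structure theory of locally solid lattices rather than the soft topological manipulations above. So the honest status of your attempt is a correct framing of what must be shown, with the decisive step — separating points by coarser Lebesgue topologies, or equivalently coarsening a non-Lebesgue Hausdorff topology while preserving the Hausdorff and locally solid properties — left open.
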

This allows us to prove the following result; the equivalence of (i) and (ii) has already been established by Theorem~\ref{25}, but is collected here for convenience.
\begin{theorem}\label{13375}
Let $\tau$ be a Hausdorff locally solid topology on a vector lattice $X$. TFAE:
\begin{enumerate}
\item $\tau$ is $uo$-Lebesgue;
\item $\tau$ is Lebesgue and unbounded;
\item $\tau$ is minimal.
\end{enumerate}
\end{theorem}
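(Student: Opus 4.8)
The equivalence (i)$\Leftrightarrow$(ii) is already Theorem~\ref{25}, so the work is to tie in minimality. I would prove (iii)$\Rightarrow$(ii) and (ii)$\Rightarrow$(iii) separately, exploiting the characterizations of unbounded Lebesgue topologies built up in the previous section.

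For (iii)$\Rightarrow$(ii): assume $\tau$ is minimal. By Proposition~\ref{2000}, $\tau$ is automatically Lebesgue, so only unboundedness remains. Since $\tau$ is Lebesgue, Proposition~\ref{7} gives that $u\tau$ is Lebesgue (indeed $uo$-Lebesgue), and by Theorem~\ref{1}, $u\tau$ is a locally solid topology which is Hausdorff because $\tau$ is. By Proposition~\ref{8}, $\tau$-convergence implies $u\tau$-convergence, i.e.\ $u\tau \subseteq \tau$. Now minimality of $\tau$ forces $u\tau = \tau$, so $\tau$ is unbounded.

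For (ii)$\Rightarrow$(iii): assume $\tau$ is Lebesgue and unbounded, and let $\tau_1 \subseteq \tau$ be any Hausdorff locally solid topology on $X$; I must show $\tau_1 = \tau$. The plan is to pass to unbounded versions and use the uniqueness theorem. Since $\tau_1 \subseteq \tau$ and $\tau$ is Lebesgue, every net that is $\tau_1$-null is coarser-than $\tau$, but more to the point I want $u\tau_1 \subseteq u\tau$: indeed if $x_\alpha \xrightarrow{u\tau_1} 0$ then $|x_\alpha|\wedge u \xrightarrow{\tau_1} 0$ hence $|x_\alpha|\wedge u \xrightarrow{\tau} 0$ for each $u\in X_+$, so $x_\alpha \xrightarrow{u\tau} 0$; thus $u\tau_1 \subseteq u\tau = \tau$. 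On the other hand $\tau_1 \subseteq \tau = u\tau$, and since $\tau_1 \subseteq u\tau_1$ always (Proposition~\ref{8}), I do \emph{not} immediately get $\tau \subseteq u\tau_1$. Instead I argue that $u\tau_1$ is itself a Hausdorff Lebesgue topology: it is Hausdorff by Theorem~\ref{1} (as $\tau_1$ is), and I claim it is Lebesgue. Here is the key point: I want to show $\tau_1$ is Lebesgue. This does not follow from $\tau_1\subseteq\tau$ directly, so instead I invoke that $\tau$ is unbounded and Lebesgue, hence $uo$-Lebesgue by Theorem~\ref{25}; then since $\tau_1 \subseteq \tau$, $x_\alpha \xrightarrow{uo} 0$ implies $x_\alpha\xrightarrow{\tau} 0$ implies $x_\alpha \xrightarrow{\tau_1} 0$, so $\tau_1$ is also $uo$-Lebesgue, hence Lebesgue and unbounded by Theorem~\ref{25} again. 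So $\tau_1 = u\tau_1$ and $\tau = u\tau$ are both Hausdorff Lebesgue topologies; by Proposition~\ref{23.1} they are both unbounded Lebesgue, and by Theorem~\ref{24}(i) a vector lattice admits at most one unbounded Hausdorff Lebesgue topology, forcing $\tau_1 = \tau$.

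The main obstacle is the step showing a coarser Hausdorff locally solid topology $\tau_1 \subseteq \tau$ must already be Lebesgue; naively a coarser topology need not inherit the Lebesgue property, and the trick is to route through the \emph{$uo$}-Lebesgue property, which \emph{is} inherited downward along the inclusion of topologies (since $uo$-convergence is fixed and does not depend on $\tau_1$). Once $\tau_1$ is seen to be $uo$-Lebesgue, Theorem~\ref{25} and the uniqueness statement Theorem~\ref{24}(i) finish everything cleanly. I would present the argument in exactly this order: Proposition~\ref{2000} for (iii)$\Rightarrow$Lebesgue, then $u\tau\subseteq\tau$ plus minimality for (iii)$\Rightarrow$unbounded; and for (ii)$\Rightarrow$(iii), the descent of the $uo$-Lebesgue property to $\tau_1$ followed by the uniqueness theorem.
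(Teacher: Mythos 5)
Your proof is correct, and your (iii)$\Rightarrow$(ii) direction is exactly the paper's argument: Proposition~\ref{2000} gives the Lebesgue property, and minimality applied to the Hausdorff locally solid topology $u\tau\subseteq\tau$ gives unboundedness. For (ii)$\Rightarrow$(iii) you take a genuinely different, though closely related, route: you push the $uo$-Lebesgue property down to the coarser topology $\tau_1$ (legitimate, since $uo$-convergence does not see the topology), apply Theorem~\ref{25} to conclude that $\tau_1$ is Lebesgue and unbounded, and finish with the uniqueness of unbounded Hausdorff Lebesgue topologies, Theorem~\ref{24}(i). The paper instead notes that $\sigma\subseteq\tau$ immediately inherits the Lebesgue property, hence is $\sigma$-Fatou, and then quotes Theorem~\ref{24.1}(iii): an unbounded Hausdorff Lebesgue topology is coarser than every Hausdorff $\sigma$-Fatou topology, so $\tau\subseteq\sigma$ and equality follows. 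Both arguments rest on the same extension machinery behind Theorem~\ref{24.1}; yours trades the $\sigma$-Fatou/coarsest-topology criterion for the uniqueness statement, which keeps the argument entirely inside the $uo$-Lebesgue circle of ideas, while the paper's is a one-line appeal to condition (iii). Two small corrections to your commentary: the worry that the Lebesgue property ``does not follow from $\tau_1\subseteq\tau$ directly'' is unfounded --- if $x_\alpha\xrightarrow{o}0$ then $x_\alpha\xrightarrow{\tau}0$, and $\tau$-convergence implies $\tau_1$-convergence precisely because $\tau_1$ is coarser, so $\tau_1$ is Lebesgue for the same trivial reason that it is $uo$-Lebesgue (and this is exactly what the paper uses); also, the aside ``$\tau_1\subseteq u\tau_1$ always'' has the inclusion backwards, since Proposition~\ref{8} gives $u\tau_1\subseteq\tau_1$, though nothing in your final chain depends on it.
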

\begin{proof}

Suppose $\tau$ is minimal. By the last proposition, it is Lebesgue. It is also unbounded since $u\tau$ is a Hausdorff locally solid topology and $u\tau \subseteq \tau$. Minimality forces $\tau=u\tau$.
\\

Conversely, suppose that $\tau$ is Hausdorff, Lebesgue and unbounded and that $\sigma \subseteq \tau$ is a Hausdorff locally solid topology. It is clear that $\sigma$ is then Lebesgue and hence $\sigma$-Fatou. By Theorem~\ref{24.1}(iii), $\tau$ is coarser than $\sigma$. Therefore, $\tau=\sigma$ and $\tau$ is minimal.
\end{proof}
In conjunction with Theorem~\ref{24}, we deduce the (already known) fact that minimal topologies, if they exist, are unique. They exist if and only if $X$ admits a Hausdorff Lebesgue topology.

 We also get many generalizations of results on unbounded topologies in Banach lattices. The following is part of [\ref{705}] Theorem 7.65:
\begin{theorem}
If $X$ is an order continuous Banach lattice then $X$ has a least topology.
\end{theorem}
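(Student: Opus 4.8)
The plan is to show that the topology $u\|\cdot\|$ (the $un$-topology) is the least topology on $X$. First I would record the easy half: since $X$ is order continuous, its norm topology $\|\cdot\|$ is a Hausdorff Lebesgue topology, so by Proposition~\ref{7} together with Theorem~\ref{1} the topology $u\|\cdot\|$ is Hausdorff, Lebesgue and unbounded, and by Theorem~\ref{24}(i) it is the \emph{unique} unbounded Hausdorff Lebesgue topology on $X$. By Theorem~\ref{13375} it is in particular minimal, but minimality alone is not enough here — the real content is that it is coarser than \emph{every} Hausdorff locally solid topology on $X$.

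The one genuinely new ingredient, and the step I expect to be the main obstacle, is the following fact, which I would isolate as a lemma: \emph{on a Banach lattice every locally solid topology is coarser than the norm topology}. To prove it, let $\sigma$ be locally solid and let $U$ be a solid $\sigma$-neighbourhood of $0$; suppose $U$ is not a norm-neighbourhood of $0$. Then one can pick $x_n\in X_+$ with $\|x_n\|<4^{-n}$ and $x_n\notin U$. The series $y:=\sum_n 2^n x_n$ converges in norm (it is absolutely summable), and since $U$ is absorbing there is $\lambda>0$ with $\lambda y\in U$; for $n$ large enough that $\lambda 2^n\ge 1$ we have $0\le x_n\le\lambda 2^n x_n\le\lambda y$, so solidity of $U$ forces $x_n\in U$, a contradiction. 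Hence $\sigma\subseteq\|\cdot\|$. This is the only place where completeness of the norm, rather than just its compatibility with the lattice structure, is used.

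The remainder is assembly of results already in hand. Let $\sigma$ be an arbitrary Hausdorff locally solid topology on $X$. By the lemma $\sigma\subseteq\|\cdot\|$, and since the norm topology is Lebesgue this forces $\sigma$ to be Lebesgue: if $x_\alpha\downarrow 0$ then $\|x_\alpha\|\to 0$, hence $x_\alpha\xrightarrow{\sigma}0$. Consequently, by Proposition~\ref{7}, Theorem~\ref{1} and Definition~\ref{19}, $u\sigma$ is a Hausdorff, Lebesgue, unbounded topology, so $u\sigma=u\|\cdot\|$ by the uniqueness recorded in the first paragraph. Finally $u\sigma\subseteq\sigma$ by Proposition~\ref{8}, whence $u\|\cdot\|=u\sigma\subseteq\sigma$. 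Since $\sigma$ was arbitrary, $u\|\cdot\|$ is the least topology on $X$; once the lemma is available the conclusion drops out of the uniqueness of unbounded Hausdorff Lebesgue topologies.
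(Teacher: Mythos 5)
Your proof is correct, but it takes a genuinely different route from the paper: the paper offers no argument at all here, simply quoting the statement as part of Theorem 7.65 in Aliprantis--Burkinshaw's book (and then remarking that the least topology is $un$), whereas you give a self-contained derivation from the paper's own machinery. The one new ingredient you supply, that every locally solid topology on a Banach lattice is coarser than the norm topology, is a known fact, and your proof of it is the standard one and is sound: if a solid $\sigma$-neighbourhood $U$ failed to be a norm neighbourhood you extract $x_n\in X_+$ with $\lVert x_n\rVert<4^{-n}$, $x_n\notin U$ (solidity lets you pass to $\lvert z_n\rvert$), form the norm-convergent series $y=\sum 2^n x_n$, and use absorbency plus solidity and $2^n x_n\le y$ (the positive cone is norm closed, so the increasing partial sums are dominated by $y$) to force $x_n\in U$ for large $n$. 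The assembly is also correct: any Hausdorff locally solid $\sigma$ is coarser than the norm, hence Lebesgue by order continuity of the norm, so $u\sigma$ is Hausdorff, Lebesgue and unbounded, and the uniqueness of unbounded Hausdorff Lebesgue topologies gives $u\sigma=un$, while $u\sigma\subseteq\sigma$ always; thus $un\subseteq\sigma$. What your approach buys is an explicit, elementary proof inside the paper's framework that also identifies the least topology concretely as $un$ (the paper only asserts this afterwards); what the paper's citation buys is brevity and reliance on a more general result ($X$ admits a least topology iff it admits a Hausdorff Lebesgue topology with additional structure, in the Aliprantis--Burkinshaw formulation). An alternative assembly, closer in spirit to the paper's Theorem on minimality, would be to note that your lemma makes every Hausdorff locally solid topology on $X$ Lebesgue, hence Fatou, and then invoke the coarseness statement in Theorem~\ref{24.1}(iii) directly; but your uniqueness argument is equally valid.
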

The least topology on an order continuous Banach lattice is, simply, $un$. This proves that $un$ is ``special", and also that it has been implicitly studied before. If $X$ is any Banach lattice then, since the topology $u\lvert\sigma\rvert(X^*,X)$ is Hausdorff and $uo$-Lebesgue, it also has a minimality property:
\begin{lemma}
Let $X$ be a Banach lattice. $u\lvert\sigma\rvert(X^*,X)$ is the (unique) minimal topology on $X^*$.
\end{lemma}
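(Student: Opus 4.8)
The plan is to recognize that this final lemma is an immediate consequence of the machinery already assembled, specifically Theorem~\ref{13375} together with the remark, recorded in the excerpt just before Theorem~\ref{24}, that $u\lvert\sigma\rvert(X^*,X)$ is a Hausdorff $uo$-Lebesgue topology on $X^*$. First I would recall why the latter holds: by Theorem 6.4 of [\ref{705}] the absolute weak topology $\lvert\sigma\rvert(X^*,X)$ is a Hausdorff Lebesgue topology on $X^*$ (taking $A=X^*$, which is an ideal of $X^{\sim}$ since $X$ is a Banach lattice), and hence by Proposition~\ref{7} its unbounded version $u\lvert\sigma\rvert(X^*,X)$ is Hausdorff and $uo$-Lebesgue (Hausdorffness being preserved under unbounding by Theorem~\ref{1}).

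Next I would invoke Theorem~\ref{13375}, which asserts the equivalence of the $uo$-Lebesgue property with minimality for Hausdorff locally solid topologies on a vector lattice. Applying the implication (i)$\Rightarrow$(iii) of that theorem to $X^*$ equipped with $u\lvert\sigma\rvert(X^*,X)$ shows that this topology is minimal. For uniqueness I would appeal to the observation, made in the excerpt in conjunction with Theorem~\ref{24}, that minimal topologies are unique when they exist: indeed, a minimal topology is Hausdorff, Lebesgue and unbounded by Theorem~\ref{13375}, and Theorem~\ref{24}(i) says $X^*$ admits at most one unbounded Hausdorff Lebesgue topology. Hence $u\lvert\sigma\rvert(X^*,X)$ is \emph{the} minimal topology on $X^*$, justifying the parenthetical ``(unique)''.

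There is essentially no obstacle here; the lemma is a corollary packaged for emphasis. The only point requiring the mildest care is confirming that $X^*$ is an ideal in $X^{\sim}$, which is standard: the order dual and topological dual of a Banach lattice coincide, and the order dual is always an ideal in the order-bounded dual. I would therefore keep the proof to two or three sentences, citing Theorem 6.4 of [\ref{705}], Proposition~\ref{7}, Theorem~\ref{13375}, and Theorem~\ref{24}, rather than reproving anything.

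\begin{proof}
Since $X$ is a Banach lattice, $X^*=X^{\sim}$ is an ideal of $X^{\sim}$, so by Theorem 6.4 of [\ref{705}] the absolute weak topology $\lvert\sigma\rvert(X^*,X)$ is a Hausdorff Lebesgue topology on $X^*$. By Proposition~\ref{7} and Theorem~\ref{1}, $u\lvert\sigma\rvert(X^*,X)$ is then a Hausdorff $uo$-Lebesgue topology on $X^*$. By Theorem~\ref{13375}, it is minimal. Uniqueness follows from Theorem~\ref{24}: any minimal topology on $X^*$ is Hausdorff, Lebesgue and unbounded, and $X^*$ admits at most one such topology.
\end{proof}
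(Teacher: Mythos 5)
Your proposal is correct and follows essentially the same route the paper intends: the lemma is stated as an immediate consequence of the earlier observation that $u\lvert\sigma\rvert(X^*,X)$ is a Hausdorff $uo$-Lebesgue topology (via Theorem 6.4 of [\ref{705}] applied to the ideal $A=X^*=X^{\sim}$), combined with Theorem~\ref{13375} for minimality and the uniqueness of minimal (equivalently, unbounded Hausdorff Lebesgue) topologies from Theorem~\ref{24}. Your write-up just makes explicit the citations the paper leaves implicit.
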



\section{Local convexity and dual spaces of $uo$-Lebesgue topologies}

We now make some remarks about $u\tau$-continuous functionals and, surprisingly, generalize many results in [\ref{706}] whose presented proofs rely heavily on AL-representation theory and the norm.
\\

First recall that by [\ref{705}] Theorem 2.22, if $\sigma$ is a locally solid topology on $X$ then $(X,\sigma)^* \subseteq X^{\sim}$ as an ideal. $(X,\sigma)^*$ is, therefore, an order complete vector lattice in its own right. Here $(X,\sigma)^*$ stands for the topological dual and $X^{\sim}$ for the order dual.
\begin{proposition}\label{14}
$(X,u\tau)^*\subseteq (X,\tau)^*$ as an ideal.
\end{proposition}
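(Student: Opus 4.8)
The plan is to reduce the statement to two facts already available: first, that $u\tau$ is a coarser topology than $\tau$, and second, that topological duals of locally solid topologies are ideals of $X^{\sim}$. Once these are in hand the conclusion is essentially formal, since an ideal contained in another ideal is automatically an ideal of the latter.

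First I would record that $u\tau \subseteq \tau$. This follows from Proposition~\ref{8}: since $x_\alpha \xrightarrow{\tau} 0$ implies $x_\alpha \xrightarrow{u\tau} 0$, the topology $u\tau$ has no more open sets than $\tau$; equivalently, in the notation of Theorem~\ref{1}, $U_i \subseteq U_{i,u}$ for every $i$ and $u$, so every $u\tau$-neighbourhood of zero is a $\tau$-neighbourhood of zero. Consequently the identity map $(X,\tau) \to (X,u\tau)$ is continuous, and composing with any $f \in (X,u\tau)^*$ shows $f \in (X,\tau)^*$. Thus $(X,u\tau)^* \subseteq (X,\tau)^*$ as vector subspaces of $X^{\sim}$.

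Next I would invoke [\ref{705}] Theorem 2.22 (quoted just before the proposition) to the effect that for any locally solid topology $\sigma$ on $X$, the dual $(X,\sigma)^*$ is an ideal of $X^{\sim}$. Applying this both to $\tau$ and to $u\tau$ (which is locally solid by Theorem~\ref{1}), we get that $(X,\tau)^*$ and $(X,u\tau)^*$ are both ideals of $X^{\sim}$, with the latter contained in the former.

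To finish, I would check that an ideal $I$ of a vector lattice contained in an ideal $J$ is an ideal of $J$: given $f \in I$ and $g \in J$ with $|g| \le |f|$, the element $g$ lies in the ambient lattice and is dominated by $|f|$ with $f \in I$, so $g \in I$ since $I$ is an ideal of the ambient lattice. Taking the ambient lattice to be $X^{\sim}$, $I = (X,u\tau)^*$ and $J = (X,\tau)^*$ yields the claim. I do not anticipate any real obstacle here; the only point to be careful about is the direction of the inclusion $u\tau \subseteq \tau$, which is exactly what Proposition~\ref{8} gives.
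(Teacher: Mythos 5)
Your argument is correct, but it is not the route the paper takes. You reduce everything to two citations: the containment $u\tau\subseteq\tau$ (immediate from $U_i\subseteq U_{i,u}$, or from Proposition~\ref{8}), and [\ref{705}] Theorem~2.22 applied to the locally solid topology $u\tau$, which makes $(X,u\tau)^*$ an ideal of $X^{\sim}$ sitting inside the ideal $(X,\tau)^*$; the transitivity step (an ideal of the ambient lattice contained in another ideal is an ideal of that ideal) is as routine as you claim. The paper instead proves the ideal property by hand: it takes a $u\tau$-continuous $\varphi\in(X,\tau)^*$, uses the solidity of the neighbourhoods $U_{i,u}$ together with the Riesz--Kantorovich formula $\lvert\varphi\rvert(\lvert x\rvert)=\sup\{\lvert\varphi(y)\rvert:\lvert y\rvert\le\lvert x\rvert\}$ to show $\lvert\varphi\rvert$ is again $u\tau$-continuous, and then checks the domination property $0\le\psi\le\varphi$ directly. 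In effect the paper reproves the relevant special case of [\ref{705}] Theorem~2.22 for the topology $u\tau$, which makes the argument self-contained and shows explicitly where solidity of the unbounded neighbourhoods enters; your version is shorter and more conceptual, at the price of leaning entirely on the cited theorem. Both are valid proofs of the statement.
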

\begin{proof}

It is easy to see that the set of all $u\tau$-continuous functionals in $(X,\tau)^*$ is a linear subspace. Suppose that $\varphi$ in $(X,\tau)^*$ is $u\tau$-continuous; we will show that $\lvert\varphi\rvert$ is also $u\tau$-continuous. Fix $\varepsilon>0$ and let $\{U_i\}_{i\in I}$ be a solid base for $\tau$ at zero. By $u\tau$-continuity of $\varphi$, one can find an $i\in I$ and $u>0$ such that $\lvert\varphi(x)\rvert< \varepsilon$ whenever $x \in U_{i,u}$. Fix $x \in U_{i,u}$. Since $U_{i,u}$ is solid, $\lvert y\rvert \leq \lvert x\rvert$ implies $y \in U_{i,u}$ and, therefore, $\lvert \varphi(y)\rvert < \varepsilon$. By the Riesz-Kantorovich formula, we get that
\begin{equation}
\bigl\lvert\lvert\varphi\rvert(x)\bigl\rvert \leq \lvert\varphi\rvert\bigl(\lvert x\rvert\bigl)=\text{sup}\Bigl\{\bigl\lvert\varphi(y)\bigl\rvert\ : \lvert y\rvert \leq \lvert x\rvert\Bigl\} \leq \varepsilon.
\end{equation}
It follows that $\lvert \varphi\rvert$ is $u\tau$-continuous and, therefore, the set of all $u\tau$-continuous functionals in $(X,\tau)^*$ forms a sublattice. It is straightforward to see that if $\varphi\in (X,\tau)^*_+$ is $u\tau$-continuous and $0\leq \psi \leq \varphi$ then $\psi$ is also $u\tau$-continuous and, thus, the set of all $u\tau$-continuous functionals in $(X,\tau)^*$ is an ideal.
\end{proof}

We next need some definitions. Our definition of discrete element is slightly different than [\ref{714}] since we require them to be positive and non-zero. It is consistent with [\ref{705}]. 
\begin{defn}
Let $X$ be a vector lattice. $x>0$ in $X$ is called a \textbf{\textit{discrete element}} or \textbf{\textit{atom}} if the ideal generated by $x$ equals the linear span of $x$.
\end{defn}
\begin{defn}
A vector lattice $X$ is \textbf{\textit{discrete}} or \textbf{\textit{atomic}} if there is a complete disjoint system $\{x_i\}$ consisting of discrete elements in $X_+$, i.e., $x_i\wedge x_j=0$ if $i\neq j$ and $x\in X$, $x\wedge x_i=0$ for all $i$ implies $x=0$.
\end{defn}

By [\ref{705}] Theorem 1.78, $X$ is atomic if and only if $X$ is lattice isomorphic to an order dense sublattice of some vector lattice of the form $\mathbb{R}^A$.
\\

The next result is an effortless generalization of Theorem 5.2 in [\ref{706}]. In the upcoming results we consider the $0$-vector lattice to be atomic.
\begin{lemma}\label{1289}
Let $\tau$ be a Hausdorff $uo$-Lebesgue topology on a vector lattice $X$. $\tau$ is locally convex if and only if $X$ is atomic. Moreover, if $X$ is atomic then a Hausdorff $uo$-Lebesgue topology exists, it is least, and it is the topology of pointwise convergence.
\end{lemma}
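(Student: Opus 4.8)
The plan is to prove the two implications separately, with the ``moreover'' assertion coming out of the atomic case.

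For the ``if'' direction, suppose $X$ is atomic. By [\ref{705}] Theorem 1.78 I would identify $X$ with an order dense sublattice of $\mathbb{R}^{A}$; each atom $e_{a}$ of $\mathbb{R}^{A}$ then lies in $X$, since order density provides $0<x\le e_{a}$ in $X$ and $x$ belongs to the ideal $\mathbb{R}e_{a}$ generated by the atom $e_{a}$. Let $\tau_{p}$ be the topology of pointwise convergence, i.e.\ the subspace topology $X$ inherits from the product topology of $\mathbb{R}^{A}$; it is a Hausdorff, locally solid, locally convex topology. I would check that $\tau_{p}$ is $uo$-Lebesgue: $uo$-convergence passes to regular sublattices [\ref{708}], and $uo$-convergence in $\mathbb{R}^{A}$ is exactly coordinatewise convergence (readily verified), so a net in $X$ that $uo$-converges to $0$ in $X$ does so in $\mathbb{R}^{A}$, i.e.\ coordinatewise, i.e.\ it is $\tau_{p}$-convergent to $0$. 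Thus $\tau_{p}$ is a Hausdorff $uo$-Lebesgue topology --- in particular one exists, as claimed in the ``moreover'' part --- and by Theorem~\ref{24} it is the only one, so $\tau=\tau_{p}$ is locally convex. Finally $\tau_{p}$ is the least Hausdorff locally solid topology on $X$: if $\sigma$ is any such topology and $x_{\alpha}\xrightarrow{\sigma}0$, then $\lvert x_{\alpha}\rvert\wedge e_{a}\xrightarrow{\sigma}0$ for every $a$ by $\sigma$-continuity of the lattice operations; as this net lies in the one-dimensional subspace $\mathbb{R}e_{a}$, on which $\sigma$ induces the Euclidean topology (the unique Hausdorff linear topology there), the $a$-th coordinate of $x_{\alpha}$ tends to $0$ for every $a$, so $x_{\alpha}\xrightarrow{\tau_{p}}0$ and $\tau_{p}\subseteq\sigma$.

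For the ``only if'' direction, assume $\tau$ is locally convex and, towards a contradiction, that $X$ is not atomic; then there is $0<w\in X$ dominating no atom (were $X$ atomic, its complete disjoint system of atoms would force an atom below $w$). Local convexity and the Hausdorff property yield, via Hahn--Banach, some $\psi\in(X,\tau)^{*}$ with $\psi(w)\ne0$; by [\ref{705}] Theorem 2.22, $(X,\tau)^{*}$ is an ideal of $X^{\sim}$, and since $\tau$ is Lebesgue every member of $(X,\tau)^{*}$ is order continuous, so $\varphi:=\lvert\psi\rvert$ is a positive order continuous functional with $\varphi(w)>0$. I would then pass to the Dedekind completion $X^{\delta}$: there $\tau$ extends to a Hausdorff $uo$-Lebesgue topology $\tau^{\delta}$ (a Hausdorff Lebesgue topology is $uo$-Lebesgue precisely when its extension to $X^{\delta}$ is), and $\tau^{\delta}$ is again locally convex --- a Hausdorff locally convex--solid topology is the absolute weak topology of its dual, the members of $(X,\tau)^{*}\subseteq X^{\sim}_{n}$ extend to order continuous functionals on $X^{\delta}$, and the absolute weak topology they generate on $X^{\delta}$ is a Hausdorff Lebesgue topology restricting to $\tau$, hence equals $\tau^{\delta}$ by [\ref{705}] Theorem 4.12. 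Thus there is a positive $\tau^{\delta}$-continuous (hence order continuous) functional $\Phi$ on $X^{\delta}$ with $\Phi(w)>0$.

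The contradiction is then produced inside the Dedekind complete lattice $X^{\delta}$. The null ideal $N_{\Phi}=\{z\in X^{\delta}:\Phi(\lvert z\rvert)=0\}$ is a band, so its disjoint complement $C_{\Phi}$ is a projection band; let $w_{0}$ be the component of $w$ in $C_{\Phi}$, so $0\le w_{0}\le w$, $\Phi(w_{0})=\Phi(w)>0$, and $\Phi$ is strictly positive on $(0,w_{0}]$. Since $w$ dominates no atom of $X$ it dominates no atom of $X^{\delta}$ (an atom of $X^{\delta}$ below $w$ would, by order density, lie in $X$ and be an atom of $X$), so neither does $w_{0}$; splitting $w_{0}$ into two disjoint positive pieces, retaining one and iterating on the other, I obtain an infinite disjoint sequence $(e_{n})$ with $0<e_{n}\le w_{0}$. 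Set $x_{n}:=e_{n}/\Phi(e_{n})$, so that $\Phi(x_{n})=1$ for every $n$. On the other hand $x_{n}\xrightarrow{uo}0$: for each $u\in(X^{\delta})_{+}$ the sequence $(x_{n}\wedge u)$ is disjoint and order bounded by $u$, hence order null (a disjoint sequence is $uo$-null [\ref{708}], and an order bounded $uo$-null net is order null). As $\tau^{\delta}$ is $uo$-Lebesgue this forces $x_{n}\xrightarrow{\tau^{\delta}}0$, so $\Phi(x_{n})\to0$, contradicting $\Phi(x_{n})=1$. Hence $X$ is atomic.

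The verifications that $\tau_{p}$ and $\tau^{\delta}$ are locally solid, locally convex and Hausdorff, together with the fact that an order bounded disjoint sequence is order null, are routine. I expect the one genuinely delicate point to be the ``only if'' direction's step of securing a single positive functional $\Phi$ that both ``sees'' $w$ and annihilates $uo$-null nets: this is precisely what forces the detour through $X^{\delta}$ (to make the carrier of $\Phi$ a projection band), and it rests on the elementary but crucial observation that rescaling a disjoint sequence that lies in a fixed order interval cannot destroy its $uo$-nullity --- the engine driving the contradiction.
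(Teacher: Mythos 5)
Your proof is a genuinely different route from the paper's: the paper disposes of this lemma by citation, identifying vector lattices admitting a Hausdorff Lebesgue topology with Conradie's pre-$L_0$ spaces and quoting Proposition 3.5 of [\ref{714}] together with Theorem 7.70 of [\ref{705}]. You instead give a self-contained argument: the atomic case via the order dense embedding into $\mathbb{R}^A$, uniqueness of Hausdorff $uo$-Lebesgue topologies (Theorems~\ref{24} and~\ref{25}), and the one-dimensional-subspace trick for leastness; the non-atomic case via a Hahn--Banach functional and a rescaled disjoint sequence below an atomless element. The whole architecture is sound and all the ingredients you use (regularity of order dense sublattices, $uo$-nullity of disjoint sequences, $(X,\tau)^*$ being an ideal of $X^\sim_n$ for Lebesgue $\tau$, bands being projection bands in $X^\delta$, the corollary that $\tau$ is $uo$-Lebesgue iff $\tau^\delta$ is) are available in or before this section. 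What your approach buys is transparency: it makes visible that the only obstruction to local convexity is a single strictly positive order continuous functional living over an atomless piece.

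There is, however, one misstated justification. You assert that ``a Hausdorff locally convex--solid topology is the absolute weak topology of its dual.'' This is false in general: the norm topology of $\ell_2$ is Hausdorff, locally convex--solid and Lebesgue, yet it is strictly finer than $\lvert\sigma\rvert(\ell_2,\ell_2)$. The step is nevertheless salvageable in two ways. Either invoke minimality: $\tau$ is Hausdorff and $uo$-Lebesgue, hence minimal by Theorem~\ref{13375}, and $\lvert\sigma\rvert\bigl(X,(X,\tau)^*\bigr)$ is a coarser Hausdorff locally solid topology (Hausdorff because the dual separates points), so the two coincide and your extension argument proceeds. Or, more economically, drop the claim that $\tau^{\delta}$ is locally convex altogether: you only need one positive $\tau^{\delta}$-continuous functional $\Phi$ with $\Phi(w)>0$, and since $X$ is order dense and majorizing in $X^{\delta}$ and $\tau^{\delta}$ is Lebesgue, $X$ is $\tau^{\delta}$-dense in $X^{\delta}$, so the uniformly continuous functional $\varphi$ extends to $\tilde\varphi\in(X^{\delta},\tau^{\delta})^*$, and $\Phi:=\lvert\tilde\varphi\rvert$ works because the topological dual is a sublattice of the order dual. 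With either repair the proof is correct.
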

\begin{proof}
By page 291 of [\ref{714}], a pre-$L_0$ space is the same as a vector lattice that admits a Hausdorff Lebesgue topology; the first part of our lemma is just a re-wording of Proposition 3.5 in [\ref{714}]. The moreover part follows from Theorem 7.70 in [\ref{705}] (remember, ``minimal" in [\ref{705}] means ``least"). Actually, knowing that Hausdorff $uo$-Lebesgue topologies are Lebesgue and disjoint sequences are null, the entire lemma can be deduced from the statement and proof of Theorem 7.70.
\end{proof}
Consider Remark 4.15 in [\ref{706}]. It is noted that $\ell_{\infty}$ is atomic yet $un$-convergence is not the same as pointwise convergence. Lemma~\ref{1289} tells us that $\ell_{\infty}$ does admit a least topology that coincides with the pointwise convergence. Since $\ell_{\infty}$ is a dual Banach lattice, $u\lvert\sigma\rvert(\ell_{\infty},\ell_1)$ is defined and must be the least topology on $\ell_{\infty}$. This is an example where $X^*$ is not an order continuous Banach lattice but $u\lvert \sigma\rvert(X^*,X)$ is still a least topology.

\begin{theorem}\label{atomic}
$uo$-convergence in a vector lattice $X$ agrees with the convergence of a locally convex-solid topology on $X$ iff $X$ is atomic.
\end{theorem}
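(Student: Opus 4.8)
The plan is to prove the two implications separately; the forward implication will be essentially free given Lemma~\ref{1289}, while the reverse implication is a bookkeeping exercise via the representation of atomic vector lattices.

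For the ``only if'' direction, suppose $uo$-convergence on $X$ is exactly the convergence of some locally convex-solid topology $\tau$. The key point is that $\tau$ is automatically Hausdorff. Indeed, since $X$ is Archimedean, $uo$-limits are unique: if $x_\alpha\xrightarrow{uo}x$ and $x_\alpha\xrightarrow{uo}y$, then for each $u\in X_+$ we have $\lvert x-y\rvert\wedge u\le\lvert x_\alpha-x\rvert\wedge u+\lvert x_\alpha-y\rvert\wedge u\xrightarrow{o}0$, so the constant net $\lvert x-y\rvert\wedge u$ is $o$-null and hence equals $0$; taking $u=\lvert x-y\rvert$ gives $x=y$. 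A topology whose net convergence admits unique limits is Hausdorff. Since moreover $\tau$-convergence \emph{is} $uo$-convergence, $\tau$ is trivially $uo$-Lebesgue, and it is locally convex by hypothesis, so Lemma~\ref{1289} applies and gives that $X$ is atomic.

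For the ``if'' direction, assume $X$ is atomic. By [\ref{705}] Theorem~1.78 we may regard $X$ as an order dense --- hence regular --- sublattice of $\mathbb{R}^A$ for some index set $A$. I would first check directly that in $\mathbb{R}^A$ a net $uo$-converges to $0$ precisely when it converges coordinatewise: testing $\lvert x_\alpha\rvert\wedge u\xrightarrow{o}0$ against $u$ a single coordinate unit vector yields coordinatewise convergence, and conversely, since suprema and infima in $\mathbb{R}^A$ are computed coordinatewise, $\inf_\alpha\sup_{\beta\ge\alpha}\bigl(\lvert x_\beta\rvert\wedge u\bigr)$ vanishes coordinate by coordinate whenever the net is coordinatewise null. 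Next I would invoke the regularity of $uo$-convergence --- a net in a regular sublattice $uo$-converges there iff it $uo$-converges in the ambient space (see [\ref{708}]) --- to deduce that a net in $X$ is $uo$-null iff it is coordinatewise null, i.e., iff it converges to zero in the subspace topology induced by the product topology of $\mathbb{R}^A$. That subspace topology is linear with a base at zero consisting of convex solid sets, hence locally convex-solid, which is exactly what is required.

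The only genuine work sits in the ``if'' direction, and it is purely routine: the explicit description of $uo$-convergence in $\mathbb{R}^A$ together with the standard (external) fact that $uo$-convergence is unchanged by passage to and from regular sublattices. In the ``only if'' direction everything is immediate once Lemma~\ref{1289} is available; the single point worth spelling out there is that $uo$-limits are unique, which forces $\tau$ to be Hausdorff so that the lemma is applicable.
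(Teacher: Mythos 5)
Your proof is correct and follows essentially the same route as the paper's: the forward direction uses uniqueness of $uo$-limits to get Hausdorffness and then applies Lemma~\ref{1289}, and the reverse direction embeds the atomic lattice as an order dense (regular) sublattice of $\mathbb{R}^A$, identifies $uo$-convergence there with pointwise convergence, and transfers via the regular-sublattice invariance of $uo$-convergence (Theorem 3.2 of [\ref{708}]). The only difference is cosmetic: you verify directly that $uo$-convergence in $\mathbb{R}^A$ is coordinatewise, where the paper cites [\ref{dual spaces}].
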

\begin{proof}
Suppose $uo$-convergence agrees with the convergence of a locally convex-solid topology $\tau$. Since $uo$-limits are unique, $\tau$ is Hausdorff. Clearly, $\tau$ is $uo$-Lebesgue, so, by Lemma~\ref{1289}, $X$ is atomic.
\\

Suppose $X$ is atomic. By [\ref{705}] Theorem 1.78, $X$ is lattice isomorphic to an order dense sublattice of a vector lattice of the form $\mathbb{R}^A$. Since $uo$-convergence is preserved through the onto isomorphism and the order dense embedding into $\mathbb{R}^A$, we may assume that $X\subseteq \mathbb{R}^A$. It was noted in [\ref{dual spaces}] that $uo$-convergence in $\mathbb{R}^A$ is just pointwise convergence. Using Theorem 3.2 of [\ref{708}], it is easy to see that the restriction of pointwise convergence to $X$ agrees with $uo$-convergence on $X$. Hence, $uo$-convergence agrees with the convergence of a locally convex-solid topology. See [\ref{1337}] for an alternative proof that $uo$-convergence in atomic vector lattices is topological.
\end{proof}
The following result is known, but nevertheless follows immediately:
\begin{corollary}
A vector lattice $X$ is atomic iff it is lattice isomorphic to a regular sublattice of some vector lattice of the form $\mathbb{R}^A$.
\end{corollary}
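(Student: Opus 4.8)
The statement to prove is: a vector lattice $X$ is atomic iff it is lattice isomorphic to a regular sublattice of some vector lattice of the form $\mathbb{R}^A$.

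The plan is to deduce this directly from Theorem~\ref{atomic} together with the earlier structural results. First I would recall that, by [\ref{705}] Theorem 1.78, $X$ is atomic if and only if it is lattice isomorphic to an \emph{order dense} sublattice of some $\mathbb{R}^A$, and that order dense sublattices are automatically regular (order dense sublattices are regular by a standard fact, e.g.\ in any vector lattice an order dense sublattice is regular). This immediately gives one direction: if $X$ is atomic, then $X$ embeds as an order dense, hence regular, sublattice of $\mathbb{R}^A$.

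For the converse, suppose $X$ is lattice isomorphic to a regular sublattice $Y$ of $\mathbb{R}^A$. Since being atomic is preserved under lattice isomorphism, it suffices to show $Y$ is atomic. The key point is that $uo$-convergence passes to and from regular sublattices: by [\ref{708}] Theorem 3.2 (invoked in the proof of Theorem~\ref{atomic}), a net in $Y$ is $uo$-null in $Y$ if and only if it is $uo$-null in $\mathbb{R}^A$. Since $uo$-convergence in $\mathbb{R}^A$ is just pointwise convergence, $uo$-convergence on $Y$ coincides with the restriction of pointwise convergence, which is a locally convex-solid topology on $Y$. By the forward direction of Theorem~\ref{atomic} applied to $Y$, we conclude $Y$, and hence $X$, is atomic.

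The only mild subtlety, and the step I would be most careful about, is confirming that "order dense sublattice" implies "regular sublattice" and that regularity (rather than order density) is exactly the hypothesis under which [\ref{708}] Theorem 3.2 transfers $uo$-convergence between $Y$ and $\mathbb{R}^A$ in both directions; once that is in hand the rest is a direct citation of Theorem~\ref{atomic} and [\ref{705}] Theorem 1.78, so there is no real computation involved.

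\begin{proof}
If $X$ is atomic then, by [\ref{705}] Theorem 1.78, $X$ is lattice isomorphic to an order dense sublattice of some $\mathbb{R}^A$; an order dense sublattice is regular, which gives one implication. Conversely, suppose $X$ is lattice isomorphic to a regular sublattice $Y$ of some $\mathbb{R}^A$. Being atomic is preserved by lattice isomorphisms, so it suffices to prove $Y$ is atomic. Since $Y$ is regular in $\mathbb{R}^A$, by Theorem 3.2 of [\ref{708}] a net in $Y$ is $uo$-null in $Y$ if and only if it is $uo$-null in $\mathbb{R}^A$; as $uo$-convergence in $\mathbb{R}^A$ is pointwise convergence, $uo$-convergence on $Y$ agrees with the restriction of the pointwise topology to $Y$. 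This restriction is a locally convex-solid topology, so by Theorem~\ref{atomic}, $Y$ is atomic. Hence $X$ is atomic.
\end{proof}
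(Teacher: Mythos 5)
Your proof is correct and follows the same route as the paper: the forward direction via Theorem 1.78 of [\ref{705}] (with the standard observation that order dense sublattices of Archimedean lattices are regular), and the converse by combining Theorem 3.2 of [\ref{708}] with Theorem~\ref{atomic}. You have simply spelled out the details that the paper's one-line proof leaves implicit.
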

\begin{proof}
The forward direction follows from Theorem 1.78 of [\ref{705}]. For the converse, combine Theorem~\ref{atomic} with Theorem 3.2 of [\ref{708}].
\end{proof}
Theorem 7.71 in [\ref{705}] is a perfectly reasonable generalization of [\ref{706}] Corollary 5.4(ii) since the $un$-topology in order continuous Banach lattices is least. What we want, however, is to replace the least topology assumption in Theorem 7.71 with the assumption that the topology is minimal. The reason being that $uo$-convergence can ``detect" if a topology is minimal, but not necessarily if it is least. To prove Corollary 5.4 in [\ref{706}] the authors go through the theory of dense band decompositions. A similar theory of $\tau$-dense band decompositions can be developed, but there is an easier proof of this result utilizing the recent paper [\ref{707}].
\begin{proposition}
Let $\tau$ be a $uo$-Lebesgue topology on a vector lattice $X$. If $0\neq \varphi\in (X,\tau)^*$ then $\varphi$ is a linear combination of the coordinate functionals of finitely many atoms.
\end{proposition}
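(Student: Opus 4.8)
The plan is to reduce to the case where $\tau$ is the \emph{minimal} topology by first restricting to the carrier of $\varphi$, and then exploiting local convexity forced by $uo$-Lebesgue on an atomic space. Given $0 \neq \varphi \in (X,\tau)^*$, I would first recall that $(X,\tau)^*$ is an ideal of $X^{\sim}$ (by [\ref{705}] Theorem 2.22), so $\lvert\varphi\rvert \in (X,\tau)^*$ as well; moreover $\varphi$ is continuous for $u\tau$ on its null ideal complement, but more to the point, since $\tau$ is $uo$-Lebesgue it is Lebesgue and unbounded (Theorem~\ref{25}), hence every $\varphi \in (X,\tau)^*$ is order continuous. Let $N_\varphi = \{x : \lvert\varphi\rvert(\lvert x\rvert) = 0\}$ be the null ideal of $\varphi$ and let $C_\varphi = N_\varphi^d$ be its carrier band. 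Since $\varphi$ is order continuous, $\varphi$ vanishes on $N_\varphi$ and is supported on $C_\varphi$; it suffices to analyze $\varphi|_{C_\varphi}$.

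Next I would argue that $C_\varphi$ is atomic. Here is where I invoke the hypothesis: restrict $\tau$ to $C_\varphi$. Since $C_\varphi$ is a band (hence a regular sublattice) of $X$, Proposition~\ref{22222} gives that $\tau|_{C_\varphi}$ is a Hausdorff $uo$-Lebesgue topology on $C_\varphi$. Now $\varphi|_{C_\varphi}$ is a nonzero element of $(C_\varphi, \tau|_{C_\varphi})^*$ with trivial null ideal, i.e., $\varphi|_{C_\varphi}$ is a strictly positive (after replacing $\varphi$ by $\lvert\varphi\rvert$) continuous functional. The existence of a strictly positive continuous functional on $(C_\varphi, \tau|_{C_\varphi})$ means $\tau|_{C_\varphi}$ is locally convex-solid in a nontrivial way — more carefully, the functional $\psi = \lvert\varphi\rvert|_{C_\varphi}$ gives a lattice seminorm $x \mapsto \psi(\lvert x\rvert)$ which, being strictly positive and $\tau|_{C_\varphi}$-continuous, witnesses that $C_\varphi$ admits a Hausdorff locally convex-solid topology coarser than $\tau|_{C_\varphi}$; combined with the $uo$-Lebesgue / minimality machinery (Theorem~\ref{13375}, Lemma~\ref{1289}), this forces $\tau|_{C_\varphi}$ itself to be locally convex, and hence $C_\varphi$ is atomic by Lemma~\ref{1289}.

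Once $C_\varphi$ is known to be atomic, I would finish by a direct structural argument: write $C_\varphi$ as (an order dense sublattice of) $\mathbb{R}^A$ with $A$ the set of atoms in $C_\varphi$, via [\ref{705}] Theorem 1.78. The functional $\varphi|_{C_\varphi}$, being order continuous and (WLOG) positive, decomposes along the atoms: $\varphi(x) = \sum_{a \in A} \varphi(a)\, x(a)$ in the appropriate sense, where $x(a)$ denotes the $a$-coordinate. The crux is then to show this sum is \emph{finitely} supported, i.e., $\varphi(a) = 0$ for all but finitely many atoms $a$. This is where the assumption that $\varphi$ is $\tau$-continuous (not merely order continuous) does the real work: if infinitely many $\varphi(a_n) > 0$, then the atoms $(a_n)$, suitably normalized so that $\varphi(a_n) = 1$, form a disjoint sequence in $X_+$; since $\tau$ is $uo$-Lebesgue, every disjoint sequence is $\tau$-null (Theorem~\ref{24.1}(v), via Theorem~\ref{25}), so $a_n \xrightarrow{\tau} 0$ and hence $\varphi(a_n) \to 0$, contradicting $\varphi(a_n) = 1$. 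Therefore only finitely many atoms carry $\varphi$, and $\varphi$ is a (finite) linear combination of their coordinate functionals. I expect the main obstacle to be the bookkeeping in the middle step — cleanly deducing that $\tau|_{C_\varphi}$ is locally convex from the existence of a strictly positive continuous functional and the $uo$-Lebesgue property — though with Lemma~\ref{1289} and Proposition~\ref{22222} in hand this should be routine rather than deep; the disjoint-sequence argument at the end is the conceptual heart and is short.
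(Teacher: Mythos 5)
Your proposal is correct in substance, but it takes a genuinely different — and much longer — route than the paper. The paper's proof is two lines: since $\tau$ is $uo$-Lebesgue and $\varphi$ is $\tau$-continuous, $\varphi(x_\alpha)\to 0$ whenever $x_\alpha\xrightarrow{uo}0$, and then the structure of such ``$uo$-continuous'' functionals (finite linear combinations of coordinate functionals of atoms) is quoted directly from Proposition 2.2 of [\ref{707}]. What you have done is essentially reprove that cited duality result inside the locally solid framework: carrier band, local convexity forced on the carrier, atomicity via Lemma~\ref{1289}, the coordinate decomposition, and the disjoint-atom normalization trick to get finite support. Your closing argument is exactly the right mechanism and needs even less than you invoke: disjoint sequences are $uo$-null, so $uo$-Lebesgue alone kills the normalized atoms, with no appeal to Theorem~\ref{24.1}. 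The trade-off: the paper's proof is short, works verbatim without any Hausdorff assumption, and leans on an external reference; yours is self-contained and explains \emph{why} the dual is so small, at the cost of re-deriving [\ref{707}].

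Two steps deserve patching. First, the proposition does not assume $\tau$ Hausdorff, but your middle step does: Proposition~\ref{22222} and the minimality argument (Theorem~\ref{13375}) both require it. This is repairable without changing your strategy: the lattice norm $\rho(x)=\lvert\varphi\rvert(\lvert x\rvert)$ on $C_\varphi$ is Hausdorff (strict positivity), locally convex-solid, and $uo$-Lebesgue on $C_\varphi$ (a band is regular, so $uo$-null nets in $C_\varphi$ are $uo$-null in $X$, hence $\tau$-null, hence $\rho$-null), so Lemma~\ref{1289} applies to $\rho$ directly and gives atomicity of $C_\varphi$ without ever showing $\tau|_{C_\varphi}$ itself is locally convex or minimal. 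Second, ``it suffices to analyze $\varphi|_{C_\varphi}$'' tacitly uses $X=N_\varphi\oplus C_\varphi$, which need not hold since $N_\varphi$ need not be a projection band; instead use that $N_\varphi\oplus N_\varphi^d$ is an order dense ideal and that $\varphi$ is order continuous (from the Lebesgue property) to recover $\varphi$ on all of $X$ from its values on finitely many atom bands, noting also that atoms of the ideal $C_\varphi$ are atoms of $X$. With these repairs your argument is complete.
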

\begin{proof}
Suppose $0\neq \varphi\in (X,\tau)^*$. Since $\tau$ is $uo$-Lebesgue and $\varphi$ is $\tau$-continuous, $\varphi(x_{\alpha})\rightarrow 0$ whenever $x_{\alpha}\xrightarrow{uo}0$. The conclusion now follows from  Proposition 2.2 in [\ref{707}].
\end{proof}


\section{Fatou topologies}

Using the canonical base described in Theorem~\ref{1}, it is trivial to verify that if $\tau$ has the Fatou property then so does $u\tau$. In analogy with the Fatou property, it is natural to consider topologies that have a base at zero consisting of solid $uo$-closed sets. Surprisingly, this does not lead to a new concept:

\begin{lemma}\label{8882}
Let $A\subseteq X$ be a solid subset of a vector lattice $X$. $A$ is (sequentially) $o$-closed if and only if it is (sequentially) $uo$-closed.
\end{lemma}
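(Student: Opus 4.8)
The plan is to prove the two directions separately, and in each direction to reduce immediately to the case of nets $(x_\alpha)$ with $|x_\alpha| \le w$ for some fixed $w$, using solidity of $A$. Since every $o$-convergent net is $uo$-convergent, any $uo$-closed set is automatically $o$-closed, so the content is the reverse implication: assume $A$ is solid and $o$-closed, take a net $(x_\alpha) \subseteq A$ with $x_\alpha \xrightarrow{uo} x$, and show $x \in A$. Because $A$ is solid it contains $|x_\alpha|$ for each $\alpha$, and lattice operations are $uo$-continuous (this is a standard fact about $uo$-convergence, and in any case follows from the inequality $\bigl||x_\alpha|\wedge u - |x|\wedge u\bigr| \le |x_\alpha - x|\wedge u$), so $|x_\alpha| \xrightarrow{uo} |x|$; thus we may assume $(x_\alpha) \subseteq A \cap X_+$ and $x \in X_+$.

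The key step is then exactly the trick used in the proof of Proposition~\ref{4}: fix any $z \in X_+$ and observe
\begin{equation}
\bigl| x_\alpha \wedge z - x \wedge z \bigr| \le |x_\alpha - x| \wedge z \xrightarrow{o} 0,
\end{equation}
so $x_\alpha \wedge z \xrightarrow{o} x \wedge z$. Now specialize to $z = x$: the net $(x_\alpha \wedge x)$ lies in $A$ (it is dominated by $x_\alpha \in A$ and $A$ is solid) and $o$-converges to $x$. Since $A$ is $o$-closed, $x \in A$. This handles the non-sequential case; for the sequential statement the same computation goes through verbatim with $\alpha$ replaced by $n$, using that $A$ is sequentially $o$-closed and that the relevant sequences $(|x_n|)$, $(x_n \wedge x)$ are sequences whenever $(x_n)$ is.

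One subtlety to address is whether the argument is sensitive to the chosen definition of order convergence ($o$ versus $o_1$); as remarked in the excerpt, for a solid set the $o$-closed and $o_1$-closed notions coincide, so no ambiguity arises, but I would phrase the inequality-chasing so that it only uses the domination $|x_\alpha \wedge z - x \wedge z| \le |x_\alpha - x|\wedge z$, which transfers order convergence of $(|x_\alpha - x|\wedge z)$ to order convergence of $(x_\alpha \wedge z - x\wedge z)$ under either definition. I do not anticipate a genuine obstacle here — the main point is simply recognizing that evaluating the $uo$-convergence at the single test element $u = x$ already produces an $o$-convergent net inside $A$, after first reducing to the positive case via $uo$-continuity of $x \mapsto |x|$ and solidity.
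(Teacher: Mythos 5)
Your proof is correct and follows essentially the same route as the paper: both arguments test the $uo$-convergence at the single vector $|x|$, note that the resulting net $|x_\alpha|\wedge|x|$ (your $x_\alpha\wedge x$ after the positivity reduction) stays in $A$ by solidity and $o$-converges to $|x|$, and then invoke $o$-closedness together with one further use of solidity, which your WLOG silently absorbs. The only cosmetic difference is that you verify the order convergence directly from the inequality $|x_\alpha\wedge x-x\wedge x|\le|x_\alpha-x|\wedge x$, whereas the paper cites $uo$-continuity of the lattice operations plus the fact that $uo$- and $o$-convergence agree on order bounded nets.
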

\begin{proof}
If $A$ is $uo$-closed then it is clearly $o$-closed. Suppose $A$ is $o$-closed, $(x_{\alpha})\subseteq A$ and $x_{\alpha}\xrightarrow{uo}x$. We must prove $x \in A$. By continuity of lattice operations, $|x_{\alpha}|\wedge|x|\xrightarrow{uo}|x|$, so that $|x_{\alpha}|\wedge|x|\xrightarrow{o}|x|$. Since $A$ is solid, $(|x_{\alpha}|\wedge|x|)\subseteq A$, and since $A$ is $o$-closed we conclude that $|x|\in A$. Finally, using the solidity of $A$ again, we conclude that $x\in A$. Sequential arguments are analogous.

\end{proof}

A simliar proof to Lemma~\ref{8882} gives the following. Compare with [\ref{704}] Lemma 2.8.
\begin{lemma}\label{546}
If $x_{\alpha}\xrightarrow{u\tau}x$ then $\lvert x_{\alpha}\rvert \wedge\lvert x\rvert \xrightarrow{\tau}\lvert x\rvert$. In particular, $\tau$ and $u\tau$ have the same (sequentially) closed solid sets.
\end{lemma}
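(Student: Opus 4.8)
The plan is to mimic the proof of Lemma~\ref{8882}, since the statement is the $u\tau$-analogue of the fact there that solid $o$-closed and solid $uo$-closed sets coincide. First I would prove the displayed inequality $\lvert x_{\alpha}\rvert \wedge\lvert x\rvert \xrightarrow{\tau}\lvert x\rvert$ from the hypothesis $x_{\alpha}\xrightarrow{u\tau}x$. By Corollary~\ref{1.1} the lattice operations are $u\tau$-continuous, so $\lvert x_{\alpha}\rvert \wedge\lvert x\rvert \xrightarrow{u\tau}\lvert x\rvert$; but the net $(\lvert x_{\alpha}\rvert \wedge\lvert x\rvert)$ is order bounded (by $\lvert x\rvert$), and by Proposition~\ref{8} on order bounded nets $u\tau$-convergence and $\tau$-convergence agree. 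Hence $\lvert x_{\alpha}\rvert \wedge\lvert x\rvert \xrightarrow{\tau}\lvert x\rvert$, which is the first assertion. Alternatively one can argue more directly: for any solid $\tau$-neighbourhood $U$ of zero, $\bigl\lvert \lvert x_{\alpha}\rvert \wedge\lvert x\rvert - \lvert x\rvert\bigr\rvert \le \lvert x_{\alpha}-x\rvert \wedge \lvert x\rvert$, and the right side is in $U$ eventually by definition of $u\tau$-convergence with $u=\lvert x\rvert$.

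For the second assertion, I would show a solid set $A$ is $\tau$-closed if and only if it is $u\tau$-closed. One direction is immediate: $u\tau\subseteq\tau$ (equivalently, $\tau$-convergence implies $u\tau$-convergence by Proposition~\ref{8}), so every $u\tau$-closed set is $\tau$-closed. For the converse, suppose $A$ is solid and $\tau$-closed, $(x_{\alpha})\subseteq A$, and $x_{\alpha}\xrightarrow{u\tau}x$; I must show $x\in A$. By the first part, $\lvert x_{\alpha}\rvert\wedge\lvert x\rvert \xrightarrow{\tau}\lvert x\rvert$. Since $A$ is solid and $x_{\alpha}\in A$, we have $\lvert x_{\alpha}\rvert\wedge\lvert x\rvert\in A$ for every $\alpha$; as $A$ is $\tau$-closed, $\lvert x\rvert\in A$; and then solidity of $A$ gives $x\in A$. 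The sequential version is identical with nets replaced by sequences throughout (noting the sequential version of continuity of lattice operations and of Proposition~\ref{8}).

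I do not anticipate a real obstacle here; the only point requiring a moment's care is making sure the order boundedness of $(\lvert x_{\alpha}\rvert\wedge\lvert x\rvert)$ is correctly invoked so that Proposition~\ref{8} applies — this is what reduces $u\tau$-convergence of this particular net to $\tau$-convergence. Everything else is a routine transcription of the Lemma~\ref{8882} argument, so the proof can simply be stated as ``a similar proof'' with the displayed inequality spelled out and the solidity/closedness chain indicated.
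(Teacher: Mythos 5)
Your proposal is correct and is exactly the argument the paper intends: it transcribes the Lemma~\ref{8882} proof, using continuity of lattice operations (or the direct estimate $\bigl\lvert\lvert x_{\alpha}\rvert\wedge\lvert x\rvert-\lvert x\rvert\bigr\rvert\le\lvert x_{\alpha}-x\rvert\wedge\lvert x\rvert$) together with order boundedness and Proposition~\ref{8} to get $\tau$-convergence, and then the solidity/closedness chain for the second assertion. No gaps.
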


This leads to the following elegant result:
\begin{theorem}\label{solid}
Let $\tau$ and $\sigma$ be Hausdorff Lebesgue topologies on a vector lattice $X$ and let $A$ be a solid subset of $X$. Then $A$ is (sequentially) $\tau$-closed if and only if it is (sequentially) $\sigma$-closed.
\end{theorem}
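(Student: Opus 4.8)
The plan is to reduce the statement to already-established results about $u\tau$-closed sets plus the uniqueness of unbounded Hausdorff Lebesgue topologies. The key observation is that $A$ being solid lets us pass freely between $\tau$- and $u\tau$-closedness via Lemma~\ref{546}, and between $\sigma$- and $u\sigma$-closedness the same way; then Theorem~\ref{24} collapses $u\tau$ and $u\sigma$ into a single topology, at which point the two closure notions literally coincide.

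First I would note that since $\tau$ is Hausdorff and Lebesgue, Proposition~\ref{7} gives that $u\tau$ is (uo-)Lebesgue, hence Lebesgue, and $u\tau$ is Hausdorff by Theorem~\ref{1}; likewise $u\sigma$ is a Hausdorff Lebesgue topology. Moreover $u\tau$ and $u\sigma$ are both unbounded (Remark after Proposition~\ref{8}, or Definition~\ref{19}). By Theorem~\ref{24}(i), a vector lattice admits at most one unbounded Hausdorff Lebesgue topology, so $u\tau = u\sigma$. Call this common topology $\rho$.

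Next, by Lemma~\ref{546}, $\tau$ and $u\tau$ have the same (sequentially) closed solid sets; since $A$ is solid, $A$ is (sequentially) $\tau$-closed iff it is (sequentially) $\rho$-closed. Applying Lemma~\ref{546} again to $\sigma$, $A$ is (sequentially) $\sigma$-closed iff it is (sequentially) $u\sigma$-closed, i.e. iff it is (sequentially) $\rho$-closed. Chaining these equivalences gives that $A$ is (sequentially) $\tau$-closed iff it is (sequentially) $\sigma$-closed, which is exactly the claim.

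I do not anticipate a real obstacle here: every ingredient is already in place, and the only thing to be careful about is that all the topologies involved are genuinely Hausdorff and Lebesgue so that Theorem~\ref{24} applies — this is where the hypothesis that $\tau$ and $\sigma$ are \emph{Hausdorff} Lebesgue (not merely Lebesgue) is used, and it is also implicitly what makes $u\tau$, $u\sigma$ Hausdorff via Theorem~\ref{1}. The sequential version needs no extra work because Lemma~\ref{546} and Theorem~\ref{24} both handle the sequential case (the uniqueness in Theorem~\ref{24} is about the topologies themselves, so it applies verbatim, and the sequential clause of Lemma~\ref{546} does the rest).
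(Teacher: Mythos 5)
Your proposal is correct and follows essentially the same route as the paper: apply Lemma~\ref{546} to pass between $\tau$/$u\tau$ (and $\sigma$/$u\sigma$) closedness of the solid set $A$, then invoke uniqueness of the unbounded Hausdorff Lebesgue topology to identify $u\tau$ with $u\sigma$. The only cosmetic difference is that the paper returns from $u\sigma$-closed to $\sigma$-closed via $u\sigma \subseteq \sigma$, whereas you reuse the equivalence in Lemma~\ref{546}; both are immediate.
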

\begin{proof}
Suppose $A$ is $\tau$-closed. By Lemma~\ref{546}, $A$ is $u\tau$-closed. Since $X$ can admit only one unbounded Hausdorff Lebesgue topology, $u\sigma=u\tau$ and, therefore, $A$ is $u\sigma$-closed. Since $u\sigma \subseteq \sigma$, $A$ is $\sigma$-closed. Sequential arguments are analogous.
\end{proof}
\begin{remark}
It is well known that locally convex topologies consistent with a given dual pair have the same closed convex sets. Theorem~\ref{solid} is a similar result for locally solid topologies. It also motivates Question~\ref{Qhi}.
\end{remark}
We can also strengthen Lemma 3.6 in [\ref{716}]. For properties and terminology involving Riesz seminorms, the reader is referred to [\ref{705}].
\begin{lemma}
Let $X$ be a vector lattice and suppose $\rho$ is a Riesz seminorm on $X$ satisfying the Fatou property. Then $x_{\alpha}\xrightarrow{uo}x\Rightarrow \rho(x)\leq \liminf\rho(x_{\alpha})$.
\end{lemma}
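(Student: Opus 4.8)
The plan is to reduce the general $uo$-convergence statement to the case of $o$-convergence, where the Fatou property of $\rho$ gives exactly what we need, using that the relevant quantities only see an order bounded piece of the net. First I would observe that if $x_{\alpha} \xrightarrow{uo} x$ then by continuity of lattice operations $\lvert x_{\alpha}\rvert \wedge \lvert x\rvert \xrightarrow{uo} \lvert x\rvert$, and since the net $(\lvert x_{\alpha}\rvert \wedge \lvert x\rvert)$ is order bounded (it lies in $[-\lvert x\rvert, \lvert x\rvert]$), $uo$-convergence of this net is the same as $o$-convergence; so $\lvert x_{\alpha}\rvert \wedge \lvert x\rvert \xrightarrow{o} \lvert x\rvert$. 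This is essentially the idea already recorded in Lemma~\ref{546}.

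Next I would invoke the Fatou property of the Riesz seminorm $\rho$ for the $o$-convergent net $\lvert x_{\alpha}\rvert \wedge \lvert x\rvert \to \lvert x\rvert$. The Fatou property for a Riesz seminorm says that whenever $0 \le y_{\beta}\uparrow y$ one has $\rho(y_{\beta})\uparrow \rho(y)$; from this one derives the liminf inequality along order convergent nets in the standard way: if $y_{\alpha}\xrightarrow{o} y$, pick a net $z_{\beta}\downarrow 0$ dominating $\lvert y_{\alpha}-y\rvert$ eventually, note $\rho(y) \le \rho(y_{\alpha}) + \rho(y - y_{\alpha}) \le \rho(y_{\alpha}) + \rho(z_{\beta})$ for $\alpha$ large (using that $\rho$ is a Riesz seminorm so $\rho$ is monotone on the positive cone), and let $\beta$ run using $\rho(z_{\beta})\downarrow 0$, which holds by Fatou. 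Applying this with $y_{\alpha} = \lvert x_{\alpha}\rvert \wedge \lvert x\rvert$ and $y = \lvert x\rvert$ gives $\rho(\lvert x\rvert) \le \liminf \rho(\lvert x_{\alpha}\rvert \wedge \lvert x\rvert)$.

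Finally I would close the loop using monotonicity of the Riesz seminorm: since $\lvert x_{\alpha}\rvert \wedge \lvert x\rvert \le \lvert x_{\alpha}\rvert$ we have $\rho(\lvert x_{\alpha}\rvert \wedge \lvert x\rvert) \le \rho(\lvert x_{\alpha}\rvert) = \rho(x_{\alpha})$, hence $\liminf \rho(\lvert x_{\alpha}\rvert \wedge \lvert x\rvert) \le \liminf \rho(x_{\alpha})$, and also $\rho(x) = \rho(\lvert x\rvert)$. Chaining the inequalities yields $\rho(x) \le \liminf \rho(x_{\alpha})$, as desired. The only mildly delicate point is the derivation of the $o$-convergence liminf inequality from the monotone Fatou property; this is routine but should be spelled out since the paper is careful about the fact that the Fatou property is phrased via increasing nets rather than order convergence. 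One should also double-check that $\liminf$ over the net is taken in the usual sense (infimum over tails of suprema, or rather supremum over tails of infima) so that passing to the order bounded subnet $\lvert x_{\alpha}\rvert\wedge\lvert x\rvert$ does not lose anything — it does not, since the index set is unchanged.
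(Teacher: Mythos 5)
Your reduction of the $uo$-statement to the order bounded net $\lvert x_{\alpha}\rvert\wedge\lvert x\rvert\xrightarrow{o}\lvert x\rvert$, followed by monotonicity of $\rho$, is exactly the paper's strategy. The gap is in your derivation of the liminf inequality for $o$-convergent nets: you dominate $\lvert y_{\alpha}-y\rvert$ by $z_{\beta}\downarrow 0$ and then assert that $\rho(z_{\beta})\downarrow 0$ ``holds by Fatou.'' It does not. The Fatou property of a Riesz seminorm only says that $0\le u_{\gamma}\uparrow u$ implies $\rho(u_{\gamma})\uparrow\rho(u)$; the statement $z_{\beta}\downarrow 0\Rightarrow\rho(z_{\beta})\downarrow 0$ is order continuity of $\rho$, which is strictly stronger. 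For instance, the sup norm on $\ell_{\infty}$ is a Fatou Riesz norm, yet the sequence $z_n=\sum_{k>n}e_k$ satisfies $z_n\downarrow 0$ and $\lVert z_n\rVert=1$ for all $n$. With only the Fatou property, your triangle-inequality estimate yields $\rho(y)\le\liminf_{\alpha}\rho(y_{\alpha})+\inf_{\beta}\rho(z_{\beta})$, and the last term need not vanish, so the argument collapses; if $\rho$ were order continuous the lemma would be nearly trivial, and the whole point is to get by with Fatou alone.

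The way to repair it is the paper's argument: from $\lvert y_{\alpha}-y\rvert\le z_{\beta}$ for $\alpha\ge\alpha_0$ deduce $(\lvert y\rvert-z_{\beta})^+\le\lvert y_{\alpha}\rvert$, hence $\rho\bigl((\lvert y\rvert-z_{\beta})^+\bigr)\le\liminf_{\alpha}\rho(y_{\alpha})$ for every $\beta$, and then apply the Fatou property to the \emph{increasing} net $0\le(\lvert y\rvert-z_{\beta})^+\uparrow\lvert y\rvert$ (note $\lvert y\rvert-(\lvert y\rvert-z_{\beta})^+=\lvert y\rvert\wedge z_{\beta}\downarrow 0$) to get $\rho\bigl((\lvert y\rvert-z_{\beta})^+\bigr)\uparrow\rho(y)$ and so $\rho(y)\le\liminf_{\alpha}\rho(y_{\alpha})$. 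Your first and third steps (the passage via Lemma~\ref{546} and the chain $\rho(x)=\rho(\lvert x\rvert)\le\liminf\rho(\lvert x_{\alpha}\rvert\wedge\lvert x\rvert)\le\liminf\rho(x_{\alpha})$) are fine as written.
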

\begin{proof}
First we prove the statement for order convergence. Assume $x_{\alpha}\xrightarrow{o}x$ and pick a dominating net $y_{\beta}\downarrow 0$. Fix $\beta$ and find $\alpha_0$ such that $\lvert x_{\alpha}-x \rvert\leq y_{\beta}$ for all $\alpha\geq \alpha_0$. Since \begin{displaymath}
(\lvert x\rvert-y_{\beta})^+\leq \lvert x_{\alpha}\rvert,
\end{displaymath}
we conclude that $\rho((\lvert x\rvert-y_{\beta})^+) \leq \rho(x_{\alpha})$. Since this holds for all $\alpha\geq \alpha_0$ we can conclude  that $\rho((\lvert x\rvert-y_{\beta})^+) \leq \liminf\rho(x_{\alpha})$. Since $\rho$ is Fatou and $0 \leq (\lvert x\rvert-y_{\beta})^+\uparrow \lvert x\rvert$ we conclude that $\rho((\lvert x\rvert-y_{\beta})^+)\uparrow \rho(x)$ and so $\rho(x)\leq \liminf \rho(x_{\alpha})$. 
\\

Now assume that $x_{\alpha}\xrightarrow{uo}x$. Then $\lvert x_{\alpha}\rvert \wedge \lvert x\rvert \xrightarrow{o}\lvert x\rvert.$ Using the above result and properties of Riesz seminorms, $\rho(x)=\rho(\lvert x\rvert )\leq \liminf \rho(\lvert x_{\alpha}\rvert \wedge\lvert x\rvert)\leq \liminf\rho(\lvert x_{\alpha}\rvert)=\liminf\rho(x_{\alpha})$.
\end{proof}



We next investigate how unbounded Fatou topologies lift to the order completion. Theorem 4.12 of [\ref{705}] asserts that if $\sigma$ is a Fatou topology on a vector lattice $X$ then $\sigma$ extends uniquely to a Fatou topology $\sigma^{\delta}$ on $X^{\delta}$. We will use this notation in the following theorem. 
\begin{proposition}
Let $X$ be a vector lattice and $\tau$ a Fatou topology on $X$. Then $u(\tau^{\delta})=(u\tau)^{\delta}$.
\end{proposition}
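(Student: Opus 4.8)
The plan is to mimic the strategy used for the Lebesgue case (the Lemma two results earlier, where it was shown $u(\tau^{\delta})=(u\tau)^{\delta}=(u\tau)^u|_{X^\delta}$), but now replacing the Lebesgue machinery with the Fatou analogue. Both $u(\tau^{\delta})$ and $(u\tau)^{\delta}$ are locally solid topologies on $X^{\delta}$, so by the uniqueness of Fatou extensions from Theorem 4.12 of [\ref{705}] it suffices to check two things: first, that each is a Fatou topology on $X^{\delta}$, and second, that each restricts to the same topology on $X$, namely $u\tau$. The second point is where I would invoke Corollary~\ref{18.1}: for a net $(x_\alpha)$ in $X$, convergence $x_\alpha\to 0$ in $u\tau$ is equivalent to convergence in $u(\sigma^*)$ in $X^\delta$ for any locally solid extension $\sigma^*$ of $\sigma$; applied with $\sigma=\tau$ this tells us that both $u(\tau^\delta)$ and $(u\tau)^\delta$ agree with $u\tau$ when restricted to nets from $X$. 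Actually, more directly: since $\tau^\delta$ extends $\tau$, Corollary~\ref{18.1} shows $u(\tau^\delta)|_X = u\tau$; and $(u\tau)^\delta$ extends $u\tau$ by construction. So both topologies on $X^\delta$ are locally solid extensions of $u\tau$.

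The remaining work is to verify the Fatou property for both. For $(u\tau)^{\delta}$: since $\tau$ is Fatou, the paragraph opening this section notes that $u\tau$ is Fatou; then Theorem 4.12 of [\ref{705}] directly gives that its extension $(u\tau)^\delta$ is Fatou on $X^\delta$. For $u(\tau^{\delta})$: since $\tau$ is Fatou, $\tau^\delta$ is Fatou on $X^\delta$ by Theorem 4.12; then applying the section-opening observation again (``if $\tau$ has the Fatou property then so does $u\tau$'') to the space $X^\delta$ with topology $\tau^\delta$, we get that $u(\tau^\delta)$ is Fatou on $X^\delta$. So both $u(\tau^\delta)$ and $(u\tau)^\delta$ are Fatou extensions of $u\tau$ from $X$ to $X^\delta$, and by the uniqueness clause of Theorem 4.12 they coincide.

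The step I expect to be the main obstacle is confirming that Corollary~\ref{18.1} genuinely applies here — in particular, that $\tau^\delta$ really is \emph{a} locally solid extension of $\tau$ in the sense needed (it is, since Fatou topologies are locally solid and $\tau^\delta|_X=\tau$), and that ``extension'' in the Fatou-uniqueness statement of Theorem 4.12 is with respect to the correct embedding $X\hookrightarrow X^\delta$ compatible with the one used in Corollary~\ref{18.1} (both use $X$ majorizing in $X^\delta$, so this is fine). A secondary subtlety: one should make sure that the section-opening remark ``$\tau$ Fatou $\Rightarrow$ $u\tau$ Fatou'' was proved for an arbitrary vector lattice and arbitrary Fatou $\tau$, so that it can be reapplied to $(X^\delta,\tau^\delta)$; this is immediate from the canonical-base construction in Theorem~\ref{1}, since order-closed solid sets $U_i$ yield order-closed solid sets $U_{i,u}$ (an order-convergent net inside $U_{i,u}$ has its meet with $u$ order-convergent and trapped in $U_i$, hence the limit's meet with $u$ lies in $U_i$). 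With these checks in place the proof is a short assembly of cited facts, exactly parallel to the Lebesgue lemma.
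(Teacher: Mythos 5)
Your proposal is correct and follows essentially the same route as the paper: both identify $u(\tau^{\delta})$ and $(u\tau)^{\delta}$ as Fatou topologies on $X^{\delta}$ (via the fact that unbounding preserves the Fatou property and Theorem 4.12 of [\ref{705}]), use Corollary~\ref{18.1} to see that each restricts to $u\tau$ on $X$, and conclude by the uniqueness of Fatou extensions. Your extra checks (that the Fatou-preservation remark applies to $(X^{\delta},\tau^{\delta})$ and that the embeddings are compatible) are sound and only make explicit what the paper leaves implicit.
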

\begin{proof}
Since $\tau$ is Fatou, $\tau$ extends uniquely to a Fatou topology $\tau^{\delta}$ on $X^{\delta}$. Clearly, $u(\tau^{\delta})$ is still Fatou. Suppose $(x_{\alpha})$ is a net in $X$ and $x \in X$. By Corollary~\ref{18.1}, $x_{\alpha}\xrightarrow{u(\tau^{\delta})}x$ in $X^{\delta}$ if and only if $x_{\alpha}\xrightarrow{u\tau}x$ in $X$.
\\

Since $\tau$ is Fatou, so is $u\tau$. Therefore, $u\tau$ extends uniquely to a Fatou topology $(u\tau)^{\delta}$ on $X^{\delta}$. Suppose $(x_{\alpha})$ is a net in $X$ and $x \in X$. Then $x_{\alpha}\xrightarrow{(u\tau)^{\delta}}x$ is the same as $x_{\alpha}\xrightarrow{u\tau}x$.
\\

Thus, $(u\tau)^{\delta}$ and $u(\tau^{\delta})$ are two Fatou topologies on $X^{\delta}$ that agree with the Fatou topology $u\tau$ when restricted to $X$. By uniqueness of extension $u(\tau^{\delta})=(u\tau)^{\delta}$.
\end{proof}
\begin{defn}
A locally solid vector lattice $(X,\tau)$ is said to be \textbf{\textit{weakly Fatou}} if $\tau$ has a base $\{U_i\}$ at zero consisting of solid sets with the property that for all $i$ there exists $k_i \geq 1$ such that whenever $(x_{\alpha})$ is a net in $U_i$ and $x_{\alpha}\xrightarrow{o}x$ we have $x\in k_iU_i$.
\end{defn}
\begin{remark}
It is easily seen that for solid $U$ and $k\geq 1$, the property that $x\in kU$ whenever $(x_{\alpha})$ is a net in $U$ and $x_{\alpha}\xrightarrow{o}x$ is equivalent to the property that $x\in kU$ whenever $(x_{\alpha})$ is a net in $U$ and $0\leq x_{\alpha}\uparrow x$. Also, note that a Banach lattice $X$ is weakly Fatou if and only if there exists $k \geq 1$ such that $\|x\|\leq k\sup_{\alpha}\|x_{\alpha}\|$ whenever $0 \leq x_{\alpha}\uparrow x$ in $X$.
\end{remark}

Clearly, Fatou topologies are weakly Fatou. The next theorem, and one direction of its proof, is motivated by [\ref{707}] Proposition 3.1.
\begin{theorem}\label{86868686}
Suppose $(X,\tau)$ is Hausdorff and weakly Fatou. Then $\tau$ is Levi iff $(X,\tau)$ is boundedly $uo$-complete.
\end{theorem}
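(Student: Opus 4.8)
The plan is to prove the two implications separately, using the weak Fatou property to control order limits inside basic neighbourhoods and Theorem~\ref{23} (together with standard facts about $\tau$-bounded sets) to manufacture suprema.

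\emph{Levi $\Rightarrow$ boundedly $uo$-complete.} Let $(x_\alpha)$ be a $\tau$-bounded $uo$-Cauchy net; I want to produce a $uo$-limit. Passing to $|x_\alpha - x_{\alpha'}|$ and using continuity of lattice operations I may reduce, as in Proposition~\ref{4}, to a net in $X_+$. The first step is to replace the Cauchy net by an associated increasing net: for each pair $(\alpha,\alpha')$ with $\alpha,\alpha'\ge\beta$ one has good control on $|x_\alpha-x_{\alpha'}|\wedge u$, and the standard device (used in [\ref{712}] Lemma 2.1 and its converse) is to consider, for fixed $\beta$, the element $\sup_{\alpha,\alpha'\ge\beta}$ of something — but suprema need not exist a priori, so instead I would work with the tails directly: fix $\beta_0$ and note that for $\alpha\ge\beta_0$ the net $\big(x_\alpha\wedge n x_{\beta_0}\big)$-type truncations, or more cleanly the net $z_\gamma := \bigvee\{x_\alpha : \alpha\le\gamma, \alpha\ge\beta_0\}$ over the finite-subset directed set, is increasing; the Levi property (once I check $\tau$-boundedness of this increasing net, which follows since it is built from a $\tau$-bounded set and solid hulls of $\tau$-bounded sets in a locally solid space are $\tau$-bounded) then yields $z_\gamma\uparrow z$ for some $z\in X$. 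One then argues $x_\alpha\xrightarrow{uo} z$ using the $uo$-Cauchy condition: for each $u$, $|x_\alpha - z|\wedge u\le |x_\alpha - x_{\alpha'}|\wedge u + |x_{\alpha'}\wedge z' - z\wedge z'|\cdots$, truncating and pushing the truncation level to recover order convergence from the monotone convergence $z_\gamma\uparrow z$. This is the implication that does \emph{not} use weak Fatou, only Levi plus Hausdorffness.

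\emph{Boundedly $uo$-complete $\Rightarrow$ Levi.} Let $0\le x_\alpha\uparrow$ be $\tau$-bounded; I must show $\sup x_\alpha$ exists. Since increasing nets are order Cauchy (hence $uo$-Cauchy) and $(x_\alpha)$ is $\tau$-bounded, bounded $uo$-completeness gives $x_\alpha\xrightarrow{uo} y$ for some $y\in X$. The problem is that $y$ need not be an upper bound: $uo$-convergence of an increasing net need not imply $x_\alpha\uparrow y$ (contrast Corollary~\ref{1.2}(ii), which needs $\tau$-convergence, not just $u\tau$). \textbf{This is the main obstacle, and it is exactly where the weak Fatou hypothesis enters.} Here is how I would use it. Fix $\alpha_0$ and a basic solid neighbourhood $U_i$ with constant $k_i$. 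For $\alpha\ge\alpha_0$, consider $(x_\alpha - x_{\alpha_0})\wedge$ something — more to the point, consider $w_\alpha := (x_\alpha - x_{\alpha_0})^+ = x_\alpha - x_{\alpha_0}\ge 0$, an increasing $\tau$-bounded net with $w_\alpha\xrightarrow{uo} y - x_{\alpha_0}=:w$. I claim $w$ is the supremum of $(w_\alpha)$. Since $w_\alpha\xrightarrow{uo}w$ and $0\le w_\alpha$, continuity of lattice operations gives $w_\alpha\wedge w \xrightarrow{o} w$, i.e. there is $v_\beta\downarrow 0$ with $w - w_\alpha\wedge w \le v_\beta$ eventually. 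Then $(w - v_\beta)^+ \le w_\alpha\wedge w\le w_\alpha$ eventually; so $w_\alpha \ge (w - v_\beta)^+$ for all large $\alpha$ and all $\beta$. Taking $\beta$ along $v_\beta\downarrow 0$, in an Archimedean space $(w - v_\beta)^+\uparrow w$, so any upper bound of $(w_\alpha)$ dominates each $(w-v_\beta)^+$ hence dominates $w$; and $w$ itself dominates $w_\alpha$ once I know $w_\alpha\le w$ — which follows because $w_\alpha\uparrow$ and $(w_\alp{\alpha'})_{\alpha'\ge\alpha}\xrightarrow{uo} w$ with Corollary~\ref{1.2}(i) (the positive cone $X_+$ is $u\tau$-closed, so $w - w_\alpha = \lim_{u\tau}(w_{\alpha'} - w_\alpha)\in X_+$). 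Hence $w=\sup w_\alpha$, so $\sup x_\alpha = w + x_{\alpha_0}$ exists. I note that in this direction I did not actually need weak Fatou either once Corollary~\ref{1.2}(i) is available — so I would double check whether the intended proof really needs it, or whether weak Fatou is needed only to guarantee $uo$-Cauchy $\tau$-bounded nets behave well; the safe route is to use weak Fatou precisely to pass from "$w_\alpha\xrightarrow{uo}w$ with $w_\alpha\uparrow$ in a $\tau$-bounded way" to "$w\in k_i U_i$ whenever $w_\alpha\in U_i$", which is the literal content of the definition and gives $\tau$-boundedness of the limit for free, closing any gap.

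In summary: one direction is a monotone-net/truncation argument feeding the Levi property a $\tau$-bounded increasing net built from the $uo$-Cauchy net; the other direction feeds bounded $uo$-completeness an increasing net and then upgrades the resulting $uo$-limit to a supremum using that $X_+$ is $u\tau$-closed (Corollary~\ref{1.2}), Archimedeaness, and — to keep everything inside the topology and match [\ref{707}] Proposition 3.1 — the weak Fatou estimate $x\in k_i U_i$ for order limits of nets in $U_i$. The delicate point throughout is that $uo$-convergence is not topological, so every passage "increasing $+$ $uo$-convergent $\Rightarrow$ supremum" must be justified by hand rather than quoted.
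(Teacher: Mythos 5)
Both directions of your argument have genuine gaps, and in each case the gap sits exactly where the theorem's hypotheses have to do real work. In the direction Levi $\Rightarrow$ boundedly $uo$-complete, the increasing net $z_\gamma$ of finite suprema of a $\tau$-bounded net need not be $\tau$-bounded: solid hulls of $\tau$-bounded sets are $\tau$-bounded, but finite suprema escape the solid hull (a supremum of $n$ positive elements is only dominated by a sum of $n$ of them, with $n$ unbounded). Concretely, in $L_1[0,1]$ the sequence $x_n=n\chi_{[0,1/n]}$ is norm bounded and $uo$-null (hence $uo$-Cauchy), while $\lVert x_1\vee\dots\vee x_n\rVert_1\sim\log n$; so the Levi property cannot be applied to your net, and $L_1$ is Hausdorff, Fatou and Levi, so nothing in the hypotheses rescues the construction. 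Moreover, even if $z=\sup_\gamma z_\gamma$ existed, it would be an upper bound of a tail rather than the $uo$-limit of the $uo$-Cauchy net, and the sketched ``truncate and push the truncation level'' step does not produce $uo$-convergence to $z$. The paper instead truncates the net itself: Levi implies $X$ is order complete, so for each $y\in X_+$ the order bounded, order Cauchy net $(x_\alpha\wedge y)$ order converges to some $u_y$; the weak Fatou property is used precisely here, to conclude from $x_\alpha\wedge y\in\lambda U$ that $u_y\in\lambda kU$, so that the increasing net $(u_y)_{y\in X_+}$ is $\tau$-bounded and Levi gives $u_y\uparrow u$; finally the estimate with $x_{\alpha,\alpha'}=\sup_{\beta\ge\alpha,\,\beta'\ge\alpha'}\lvert x_\beta-x_{\beta'}\rvert\wedge y$ yields $x_\alpha\xrightarrow{uo}u$. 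Your claim that this implication needs no weak Fatou is therefore exactly backwards relative to where the hypothesis enters.

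In the converse direction, the opening step ``increasing nets are order Cauchy (hence $uo$-Cauchy)'' is false without order boundedness: for $u>0$ the net $x_n=nu$ is increasing, yet $\lvert x_n-x_m\rvert\wedge u=u$ for $n\neq m$. Proving that a $\tau$-bounded increasing net is $uo$-Cauchy is the substantive content of this direction; the paper obtains it by showing the net is dominable (following the proof of Theorem 7.50 in [\ref{705}]), hence has a supremum in $X^u$ by Theorem 7.37 of [\ref{705}], hence is $uo$-Cauchy in $X^u$ and therefore in $X$ by regularity — only then can bounded $uo$-completeness be invoked. A smaller but real slip: to get $x_\alpha\le x$ you invoke Corollary~\ref{1.2}(i) and write the $uo$-limit as a $u\tau$-limit, but $uo$-convergence does not imply $u\tau$-convergence unless $\tau$ is Lebesgue; this step should be done order-theoretically (for $\beta\ge\alpha$ the net $(x_\beta-x_\alpha)$ is positive and $uo$-converges to $x-x_\alpha$, so $x\ge x_\alpha$ by $uo$-continuity of lattice operations and uniqueness of $uo$-limits, and for an upper bound $y$ one has $x_\beta\wedge y=x_\beta\xrightarrow{uo}x$ and $x_\beta\wedge y\xrightarrow{uo}x\wedge y$, whence $x\le y$), which is also how one justifies the paper's closing line ``since $(x_\alpha)$ is increasing, $x=\sup x_\alpha$.''
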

\begin{proof}
If $\tau$ is Levi, $X$ is order complete by page 112 of [\ref{705}].
\\

Let $(x_{\alpha})$ be a $\tau$-bounded $uo$-Cauchy net in $X$.  By considering the positive and negative parts, respectively, we may assume that $x_{\alpha} \geq 0$ for each $\alpha$. For each $y \in X_+$, since $\lvert x_{\alpha}\wedge y-x_{\alpha'}\wedge y\rvert \leq \lvert x_{\alpha}-x_{\alpha'}\rvert \wedge y$, the net $(x_{\alpha}\wedge y)$ is order Cauchy and hence order converges to some $u_y\in X_+$. The net $(u_y)_{y\in X_+}$ is directed upwards; we show it is $\tau$-bounded.
Let $U$ be a solid $\tau$-neighbourhood of zero with the property that there exists $k\geq 1$ with $x\in kU$ whenever $(x_{\alpha})$ is a net in $U$ and $x_{\alpha}\xrightarrow{o}x$. Since $(x_{\alpha})$ is $\tau$-bounded, there exists $\lambda>0$ such that $(x_{\alpha})\subseteq \lambda U$. Since $0\leq x_{\alpha}\wedge y \leq x_{\alpha} \in \lambda U$, $x_{\alpha}\wedge y \in \lambda U$ for all $y$ and $\alpha$ by solidity. We conclude that $u_y \in \lambda kU$ for all $y$, so that $(u_y)$ is $\tau$-bounded.
\\

Since $\tau$ is Levi, $(u_y)$ increases to an element $u \in X$. Fix $y \in X_+$. For any $\alpha,\alpha'$, define
\begin{equation}
x_{\alpha,\alpha'}=\sup_{\beta \geq \alpha, \beta'\geq \alpha'}\lvert x_{\beta}-x_{\beta'}\rvert\wedge y.
\end{equation}
Since $(x_{\alpha})$ is $uo$-Cauchy, $x_{\alpha,\alpha'}\downarrow 0$. Also, for any $z\in X_+$ and any $\beta \geq \alpha, \beta' \geq \alpha'$,
\begin{equation}
\lvert x_{\beta}\wedge z-x_{\beta'}\wedge z\rvert\wedge y \leq x_{\alpha,\alpha'}.
\end{equation}
Taking order limit first in $\beta'$ and then over $z \in X_+$, we obtain $\lvert x_{\beta}-u\rvert \wedge y \leq x_{\alpha,\alpha'}$ for any $\beta \geq \alpha$. This implies that $(x_{\alpha})$ $uo$-converges to $u$.
\\

For the converse, assume $(X,\tau)$ is boundedly $uo$-complete and let $(x_{\alpha})$ be a positive increasing $\tau$-bounded net in $X$. Following the proof of [\ref{705}] Theorem 7.50, it is easily seen that $(x_{\alpha})$ is dominable. By [\ref{705}] Theorem 7.37, $(x_{\alpha})$ has supremum in $X^u$, hence is $uo$-Cauchy in $X^u$, hence is $uo$-Cauchy in $X$. Since $(x_{\alpha})$ is $\tau$-bounded, $x_{\alpha}\xrightarrow{uo}x$ in $X$ for some $x\in X$. Since $(x_{\alpha})$ is increasing, $x=\sup x_{\alpha}$. This proves that $\tau$ is Levi.
\end{proof}

\section{Unbounded convergence witnessed by ideals}
In this section we see which results in [\ref{710}] move to the general setting.
\\

To decide whether $x_{\alpha} \xrightarrow{u\tau}x$ one has to check if $\lvert x_{\alpha}-x\rvert\wedge u\xrightarrow{\tau}0$ for every ``test" vector $u \in X_+$. A natural question is, why do we take our test vectors from $X_+$? In this section we study unbounded convergence against a smaller test set.
\begin{defn}\label{33}
Let $(X,\tau)$ be a locally solid vector lattice and $A \subseteq X$ an ideal. We say a net $(x_{\alpha})$ \textbf{\textit{unbounded $\tau$-converges to $x$ with respect to $A$}} if $\lvert x_{\alpha}-x\rvert \wedge \lvert a\rvert \xrightarrow{\tau}0$ for all $a \in A$ or, equivalently, if $\lvert x_{\alpha}-x\rvert \wedge  a \xrightarrow{\tau}0$ for all $a \in A_+$.
\end{defn}
\begin{remark} The assumption that $A$ is an ideal in the last definition presents no loss in generality since $\lvert x_{\alpha}-x\rvert \wedge \lvert a\rvert \xrightarrow{\tau}0$ for all $a \in A$ if and only if $\lvert x_{\alpha}-x\rvert \wedge \lvert a\rvert \xrightarrow{\tau}0$ for all $a \in I(A)$
\end{remark}

\begin{proposition}\label{34}
If $A$ is an ideal of a locally solid vector lattice $(X,\tau)$ then the unbounded $\tau$-convergence with respect to $A$ is a topological convergence on $X$. Moreover, the corresponding topology, $u_A\tau$, is locally solid.
\end{proposition}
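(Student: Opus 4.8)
The plan is to imitate the proof of Theorem~\ref{1} almost verbatim, the only change being that the ``test vectors'' are now drawn from $A_+$ rather than from all of $X_+$. Fix a base $\{U_i\}_{i\in I}$ of solid $\tau$-neighbourhoods of zero, and for each $i\in I$ and each $a\in A_+$ put
\[
U_{i,a}:=\{x\in X:\lvert x\rvert\wedge a\in U_i\}.
\]
First I would record the elementary facts that $U_i\subseteq U_{i,a}$, that $U_{i,a}$ is solid (because $U_i$ is), and hence that $U_{i,a}$ is absorbing. A net $(x_\alpha)$ unbounded $\tau$-converges to $0$ with respect to $A$ precisely when every $U_{i,a}$ eventually contains it, and since this convergence is clearly translation invariant, it suffices to show that $\mathcal N_0:=\{U_{i,a}:i\in I,\ a\in A_+\}$ is a neighbourhood base at zero for a linear topology, which we then name $u_A\tau$.

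Next I would verify the usual base axioms and invoke [\ref{701}] Theorem~5.6. Every $U_{i,a}$ contains $0$. For intersections, given $U_{i,a_1}$ and $U_{j,a_2}$, choose $k$ with $U_k\subseteq U_i\cap U_j$ and check $U_{k,\,a_1\vee a_2}\subseteq U_{i,a_1}\cap U_{j,a_2}$, using solidity of $U_i,U_j$ together with $\lvert x\rvert\wedge a_m\le\lvert x\rvert\wedge(a_1\vee a_2)$ for $m=1,2$. For addition, picking $j$ with $U_j+U_j\subseteq U_i$, the estimate $\lvert x+y\rvert\wedge a\le\lvert x\rvert\wedge a+\lvert y\rvert\wedge a$ gives $U_{j,a}+U_{j,a}\subseteq U_{i,a}$. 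Finally, $\lvert\lambda\rvert\le 1$ forces $\lambda U_{i,a}\subseteq U_{i,a}$ by solidity. Since all members of $\mathcal N_0$ are solid, the resulting topology is locally solid.

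The one point where the hypothesis that $A$ is an ideal is actually used is the intersection step, where $a_1\vee a_2$ must again be an admissible test vector, i.e.\ $a_1\vee a_2\in A_+$; it is enough that $A_+$ be closed under finite joins, which holds because an ideal is in particular a sublattice. (This is also why, per the Remark preceding the statement, there is no loss in assuming $A$ is an ideal: the convergence sees $A$ only through the directed set $A_+$, equivalently through $I(A)_+$.) I do not expect any genuine obstacle here, as the argument is the direct analogue of Theorem~\ref{1}; in particular I would not include a Hausdorff clause, since $u_A\tau$ need not be Hausdorff when $A$ fails to be order dense in $X$.
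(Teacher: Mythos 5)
Your proposal is correct and is essentially the paper's proof: the paper simply states that Proposition~\ref{34} follows by a minor modification of the proof of Theorem~\ref{1}, noting (as you do via $U_i\subseteq U_{i,a}$) that the base sets are absorbing because $\tau$ is a topology on all of $X$. Your expanded verification, including the observation that the ideal hypothesis enters only through closure of $A_+$ under finite joins and the correct omission of any Hausdorff clause, matches that intended argument.
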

\begin{proof}
A minor modification of the proof of Theorem~\ref{1}. The base neighbourhoods are absorbing since $\tau$ is defined on $X$.
\end{proof}
Notice the change in notation from [\ref{710}]. One may think of $u$ and $u_A$ as maps from the set of locally solid topologies on $X$ to itself. In particular, $u_Au\tau$ should make sense and equal $u_A(u(\tau))$. This is why this notation is chosen. It is evident that $u=u_X$ so this subject is more general than the previous sections of the paper.
\\

It is clear that $(u_A\tau)|_A=u(\tau|_A)$. It can be checked that if $A$ and $B$ are ideals of a locally solid vector lattice $(X,\tau)$ then $u_A(u_B\tau)=u_B(u_A\tau)=u_{A\cap B}\tau$. In particular, $u_A\tau$ is always unbounded. Notice also that if $A\subseteq B$ then $u_A\tau \subseteq u_B\tau$. 
\\

Since $u_A\tau$ is locally solid, Proposition 1.2 in [\ref{710}] comes for free. We now present the analog of Proposition 1.4 in [\ref{710}]:
\begin{proposition}\label{35}
Let $A$ be an ideal of a locally solid vector lattice $(X,\tau)$. Then $u_A\tau$ is Hausdorff iff $\tau$ is Hausdorff and $A$ is order dense in $X$. 
\end{proposition}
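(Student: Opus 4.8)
The plan is to prove the two implications separately. First, suppose $u_A\tau$ is Hausdorff. Since $\tau \subseteq u_X\tau$ and — more to the point — since every $u_A\tau$-neighbourhood of zero has the form $U_{i,a}\supseteq U_i$ (in the notation of Theorem~\ref{1}, using test vectors from $A$), the identity map $(X,\tau)\to (X,u_A\tau)$ is continuous, so $\tau$ is finer than $u_A\tau$; hence $\tau$ is Hausdorff as well. For order density of $A$, I would run the argument in the Hausdorff half of the proof of Theorem~\ref{1}: if $\bigcap \mathcal N_0 = \{0\}$ where $\mathcal N_0 = \{U_{i,a}: i\in I,\ a\in A_+\}$, I want to show $A^d = \{0\}$, which in an Archimedean vector lattice is equivalent to $A$ being order dense. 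Take $0<x$ with $x\perp A$. Then for every $a\in A_+$ we have $\lvert x\rvert\wedge a = 0\in U_i$ for all $i$, so $x\in U_{i,a}$ for all $i,a$, i.e. $x\in\bigcap\mathcal N_0 = \{0\}$, a contradiction. (Here I should note that $\tau$ Hausdorff forces $X$ Archimedean, e.g. by Corollary~\ref{1.2} applied to $u\tau$, or directly, so "$A^d=\{0\}\iff A$ order dense" is legitimate.)

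Conversely, suppose $\tau$ is Hausdorff and $A$ is order dense in $X$; I must show $\bigcap\mathcal N_0 = \{0\}$. Let $x\in X$ lie in every $U_{i,a}$. Fix a solid base $\{U_i\}$ for $\tau$ at zero. For each $a\in A_+$, since $x\in U_{i,\lvert x\rvert\wedge a}$ — wait, $\lvert x\rvert\wedge a$ need not be in $A$ unless $A$ is an ideal, which it is — we get $\lvert x\rvert\wedge(\lvert x\rvert\wedge a) = \lvert x\rvert\wedge a\in U_i$ for all $i$. Since $\tau$ is Hausdorff, $\bigcap U_i = \{0\}$, so $\lvert x\rvert\wedge a = 0$ for every $a\in A_+$, i.e. $\lvert x\rvert\perp A$. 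But $A$ is order dense, so $A^d = \{0\}$, forcing $\lvert x\rvert = 0$, i.e. $x=0$. Combined with the first paragraph this gives the equivalence.

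The main subtlety — really the only one — is the interaction between "order dense" and "$A^d=\{0\}$": these coincide precisely because the ambient lattice is Archimedean (every order dense sublattice has trivial disjoint complement, and conversely in the Archimedean setting), and we get the Archimedean property for free once $\tau$ (equivalently $u_A\tau$) is Hausdorff. I would state this explicitly rather than leave it implicit. Everything else is a direct transcription of the Hausdorff portion of the proof of Theorem~\ref{1}, with $X_+$ replaced by $A_+$ throughout and with the observation that the only place "all of $X_+$" was used there was to pick the test vector $\lvert x\rvert$; here we can only pick test vectors $\lvert x\rvert\wedge a$ with $a\in A_+$, which is exactly why order density of $A$ is the precise extra hypothesis needed. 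So I expect no real obstacle — the proof is short, and the content is entirely in correctly identifying that order density is equivalent to $\bigcap_{a\in A_+}\{u: \lvert x\rvert\wedge a = 0\} = \{x : x = 0\}$.
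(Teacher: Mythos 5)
Your proof is correct and is essentially the ``routine modification of Proposition 1.4 in [\ref{710}]'' that the paper leaves unwritten: test against the vectors $\lvert x\rvert\wedge a\in A_+$ (legitimate since $A$ is an ideal), use that a linear topology is Hausdorff iff the intersection of its zero-neighbourhoods is $\{0\}$, and pass between order density of the ideal $A$ and $A^d=\{0\}$ via the Archimedean property, which is a standing assumption in the paper and is in any case forced by Hausdorffness. One cosmetic slip: the opening inclusion ``$\tau\subseteq u_X\tau$'' is backwards (the unbounded topology is the coarser one, since $U_i\subseteq U_{i,a}$), but your subsequent argument uses the correct containment, so nothing in the proof depends on it.
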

\begin{proof}
Routine modification of the proof of Proposition 1.4 in [\ref{710}].
\end{proof}
We now move on to the analog of [\ref{710}] Proposition 2.2.
\begin{proposition}\label{36}
Suppose $A$ and $B$ are ideals of a locally solid vector lattice $(X,\tau)$. If $\overline{A}^{\tau}=\overline{B}^{\tau}$ then the topologies $u_A\tau$ and $u_B\tau$ on $X$ agree.
\end{proposition}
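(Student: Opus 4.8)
The plan is to show that $u_A\tau$ and $u_B\tau$ induce the same convergence, which by Proposition~\ref{34} suffices since both are topological. By the symmetry of the hypothesis $\overline{A}^{\tau}=\overline{B}^{\tau}$, it is enough to prove that $x_{\alpha}\xrightarrow{u_A\tau}0$ implies $x_{\alpha}\xrightarrow{u_B\tau}0$; equivalently, WLOG replacing $(x_\alpha)$ by $(|x_\alpha|)$, that if $|x_{\alpha}|\wedge a\xrightarrow{\tau}0$ for every $a\in A_+$ then $|x_{\alpha}|\wedge b\xrightarrow{\tau}0$ for every $b\in B_+$. Fix $b\in B_+$. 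Since $b\in B\subseteq\overline{B}^{\tau}=\overline{A}^{\tau}$, and since $\tau$ is locally solid, a standard solidity argument (the same one used in the proof of Lemma~\ref{18}(ii)) lets us approximate $b$ from within $A_+$: given a solid $\tau$-neighbourhood $U$ of zero and a solid $V$ with $V+V\subseteq U$, we may choose $a\in A$ with $b-a\in V$, and then replace $a$ by $|a|\wedge b\in A_+$, noting $|\,|a|\wedge b - b\,| = (b-|a|)^+ \le |b-a|\in V$, so WLOG $0\le a\le b$ and $b-a\in V$.

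Now the key estimate is the pointwise inequality
\begin{equation}
|x_{\alpha}|\wedge b \le |x_{\alpha}|\wedge a + (b-a),
\end{equation}
which holds because $b = a + (b-a)$ and $t\wedge(s+r)\le t\wedge s + r$ for $r\ge 0$ in any vector lattice (applied with $t=|x_\alpha|$, $s=a$, $r=b-a$). By the hypothesis applied to $a\in A_+$, there is $\alpha_0$ with $|x_{\alpha}|\wedge a\in V$ for all $\alpha\ge\alpha_0$. Then for $\alpha\ge\alpha_0$,
\begin{equation}
0 \le |x_{\alpha}|\wedge b \le |x_{\alpha}|\wedge a + (b-a) \in V + V \subseteq U,
\end{equation}
and solidity of $U$ gives $|x_{\alpha}|\wedge b\in U$. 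Since $U$ was an arbitrary solid $\tau$-neighbourhood of zero (and these form a base), $|x_{\alpha}|\wedge b\xrightarrow{\tau}0$, as required. Hence $x_{\alpha}\xrightarrow{u_B\tau}0$, and by symmetry $u_A\tau=u_B\tau$.

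I expect the only real subtlety to be the approximation step: one must genuinely use the closure hypothesis together with the solidity of the topology to produce a test vector inside $A_+$ (not just in $\overline{A}^\tau$) that is dominated by $b$ and $V$-close to it. This is exactly the trick appearing in Lemma~\ref{18}(ii), so it can be cited or reproduced in two lines; everything after that is the routine lattice inequality above. The passage to positive parts and the reduction to a single base neighbourhood are standard and need only be mentioned.
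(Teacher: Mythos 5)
Your proof is correct and uses essentially the same argument as the paper: fix a test vector $b\in B_+\subseteq\overline{A}^{\tau}$, approximate it by a positive element of $A$ inside a solid neighbourhood, and conclude with a standard lattice inequality plus solidity. The paper merely packages this as the identity $u_A\tau=u_{\overline{A}^{\tau}}\tau$ (so both $u_A\tau$ and $u_B\tau$ coincide with the unbounded topology witnessed by the common closure) rather than passing directly from $A$ to $B$; the content is the same.
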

\begin{proof}
It suffices to show that $u_A\tau=u_{\overline{A}}\tau$, where, for notational simplicity, $\overline{A}$ denotes the $\tau$-closure of $A$ in $X$. Let $(x_{\alpha})$ be a net in $X$. Clearly, if  $x_{\alpha} \xrightarrow{u_{\overline{A}}\tau}0$ then $x_{\alpha} \xrightarrow{u_A\tau}0$. To prove the converse, suppose that $x_{\alpha} \xrightarrow{u_A\tau}0$. Fix $y \in \overline{A}^{\tau}_+$, a solid base neighbourhood $V$ of zero for $\tau$, and a solid base neighbourhood $U$ of zero for $\tau$ with $U+U\subseteq V$. By definition, there exists $a \in A$ such that $a \in y+U$. WLOG $a \in A_+$ because, by solidity, $\lvert\lvert a\rvert-y\rvert \leq \lvert a-y\rvert\in U$ implies $\lvert a\rvert \in y+U$. By assumption, $|x_{\alpha}| \wedge a \xrightarrow{\tau}0$. This implies that there exists $\alpha_0$ such that $|x_{\alpha}| \wedge a \in U$ whenever $\alpha\geq \alpha_0$. It follows by solidity that\begin{equation}
|x_{\alpha}| \wedge y=|x_{\alpha}| \wedge (y-a+a) \leq |x_{\alpha}| \wedge \lvert y-a\rvert+|x_{\alpha}| \wedge a \in U+U \subseteq V,
\end{equation}
so that $x_{\alpha} \xrightarrow{u_{\overline{A}}\tau}0$.
\end{proof}

\begin{theorem}\label{555}
Let $X$ be a vector lattice and $Y_1,Y_2 \subseteq X$ order dense ideals of $X$. Suppose $\tau_1$ and $\tau_2$ are Hausdorff Lebesgue topology on $X$. Then the topologies $u_{Y_1}\tau_1$ and $u_{Y_2}\tau_2$ agree on $X$. Moreover, this topology is the minimal topology on $X$ so is Hausdorff and $uo$-Lebesgue.
\end{theorem}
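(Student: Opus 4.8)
The plan is to show that $u_{Y_1}\tau_1$ is itself a minimal topology on $X$; by symmetry the same argument applies to $u_{Y_2}\tau_2$, and then the uniqueness of minimal topologies (Theorem~\ref{24} together with the remark following Theorem~\ref{13375}) forces $u_{Y_1}\tau_1=u_{Y_2}\tau_2$ and identifies it as \emph{the} minimal topology on $X$. Recall that, by Theorem~\ref{13375}, ``minimal'' is equivalent to ``Hausdorff, Lebesgue and unbounded'' and also to ``Hausdorff and $uo$-Lebesgue'', so once minimality is in hand the ``moreover'' clause is immediate.

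Concretely, for $i\in\{1,2\}$ I would check the three defining properties of $u_{Y_i}\tau_i$ in turn. It is locally solid by Proposition~\ref{34}. It is Hausdorff by Proposition~\ref{35}, using precisely the hypotheses that $\tau_i$ is Hausdorff and $Y_i$ is order dense in $X$. It is unbounded by the observation recorded immediately after Proposition~\ref{36} (indeed $u_{Y_i}(u_{Y_i}\tau_i)=u_{Y_i\cap Y_i}\tau_i=u_{Y_i}\tau_i$). The only property requiring an argument is the Lebesgue property: if $x_\alpha\xrightarrow{o}0$ in $X$, then for each $a\in (Y_i)_+$ we also have $|x_\alpha|\wedge a\xrightarrow{o}0$ (a net dominating $(x_\alpha)$ dominates $(|x_\alpha|\wedge a)$ as well), whence $|x_\alpha|\wedge a\xrightarrow{\tau_i}0$ because $\tau_i$ is Lebesgue; by Definition~\ref{33} this says exactly that $x_\alpha\xrightarrow{u_{Y_i}\tau_i}0$. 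Thus $u_{Y_i}\tau_i$ is Hausdorff, Lebesgue and unbounded, hence minimal by Theorem~\ref{13375}.

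I do not expect any genuine obstacle: the real content of the theorem is simply that all the maps $\tau\mapsto u_{Y}\tau$ (for $Y$ an order dense ideal) land on the unique minimal topology, so the work consists entirely of assembling equivalences that have already been established. The one small computation needed — that restricting the test vectors to an order dense ideal does not destroy the Lebesgue property — is the short argument above. As an alternative route, one could instead note that an order dense ideal is automatically $\tau_i$-dense when $\tau_i$ is Lebesgue (approximate each $x\in X_+$ from below by a net in $(Y_i)_+$ increasing to $x$, which is then $o$- and hence $\tau_i$-convergent to $x$), so that $u_{Y_i}\tau_i=u\tau_i$ by Proposition~\ref{36}, and finish by applying Theorem~\ref{24} to the unbounded Hausdorff Lebesgue topologies $u\tau_1$ and $u\tau_2$.
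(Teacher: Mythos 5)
Your proposal is correct, but it distributes the work differently from the paper. The paper's proof verifies nothing about $u_{Y_1}\tau_1$ beyond Hausdorffness: since $Y_1\subseteq X$ gives $u_{Y_1}\tau_1\subseteq u\tau_1$, and $u\tau_1$ is minimal (Hausdorff, Lebesgue and unbounded, so Theorem~\ref{13375} applies), the Hausdorff locally solid topology $u_{Y_1}\tau_1$, being coarser than a minimal topology, must equal $u\tau_1$; uniqueness of minimal topologies then gives $u\tau_1=u\tau_2$, and symmetrically $u_{Y_2}\tau_2=u\tau_2$. You instead show that $u_{Y_i}\tau_i$ itself satisfies the characterization of minimality --- Hausdorff by Proposition~\ref{35}, Lebesgue by your direct order-convergence computation (which is fine: a dominating net for $(x_\alpha)$ also dominates $(\lvert x_\alpha\rvert\wedge a)$), and unbounded --- and then invoke Theorem~\ref{13375} and uniqueness. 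Both routes rest on the same machinery; yours costs the extra verifications but makes explicit that $u_Y\tau$ of a Hausdorff Lebesgue topology is $uo$-Lebesgue, while the paper's trades them for the monotonicity $u_{Y_1}\tau_1\subseteq u\tau_1$. Your alternative route ($Y_i$ is $\tau_i$-dense because it is order dense and $\tau_i$ is Lebesgue, hence $u_{Y_i}\tau_i=u\tau_i$ by Proposition~\ref{36}, then apply Theorem~\ref{24}) is also sound.

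One small repair is needed in your main route: the parenthetical identity $u_{Y_i}(u_{Y_i}\tau_i)=u_{Y_i}\tau_i$ shows that $u_{Y_i}\tau_i$ is a fixed point of $u_{Y_i}$, which is not what ``unbounded'' means in Definition~\ref{19}; there one needs it to be a fixed point of $u=u_X$. The observation you cite (which appears just after Proposition~\ref{34}, not Proposition~\ref{36}) does deliver this: taking $B=X$ in $u_A(u_B\tau)=u_{A\cap B}\tau$ gives $u(u_{Y_i}\tau_i)=u_{Y_i\cap X}\tau_i=u_{Y_i}\tau_i$, i.e.\ $u_{Y_i}\tau_i$ is unbounded, as that remark asserts. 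With this substitution your argument is complete.
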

\begin{proof}
$u_{Y_1}\tau_1$ is a Hausdorff locally solid topology on $X$ that is coarser than $u\tau_1$. Since $u\tau_1$ is minimal, this forces $u_{Y_1}\tau_1=u\tau_1$. By uniqueness of minimal topologies, $u\tau_1=u\tau_2$, and, by similar arguments, $u_{Y_2}\tau_2=u\tau_2$.
\end{proof}
We next generalize Corollary 4.6 of [\ref{706}].

\begin{lemma}\label{38.8}
Suppose $Y$ is a sublattice of a vector lattice $X$. If $\tau$ is a Hausdorff Lebesgue topology on $X$ then $u(\tau|_Y)=(u\tau)|_Y$.
\end{lemma}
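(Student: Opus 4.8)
The inclusion $u(\tau|_Y)\subseteq(u\tau)|_Y$ always holds (it is noted in the Sublattices subsection), so the plan is to prove the reverse inclusion using the extra hypothesis. Since a linear topology is determined by its convergent nets and both $u(\tau|_Y)$ and $(u\tau)|_Y$ are translation invariant, it suffices to show that every net $(y_\alpha)\subseteq Y$ with $y_\alpha\to 0$ in $u(\tau|_Y)$ also satisfies $y_\alpha\to 0$ in $(u\tau)|_Y$. Both topologies are locally solid, so we may replace $y_\alpha$ by $|y_\alpha|$ and assume $y_\alpha\in Y_+$. Unwinding the definitions, the hypothesis says $y_\alpha\wedge v\xrightarrow{\tau}0$ in $X$ for every $v\in Y_+$, and the goal is to prove $y_\alpha\wedge u\xrightarrow{\tau}0$ for every $u\in X_+$.

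Fix $u\in X_+$. The key point is that $y\wedge u$, for $y\in Y_+$, only depends on the part of $u$ in the band generated by $Y$, and this part is approximable from within $Y_+$; to exploit this I would pass to the universal completion. By Corollary~\ref{24.2}, $u\tau$ extends to a Hausdorff Lebesgue topology $(u\tau)^u$ on $X^u$, and since $X^u$ is Dedekind complete the supremum $u':=\sup\{u\wedge v:v\in Y_+\}$ exists in $X^u$ with $0\le u'\le u$. The crucial elementary observation is that $y\wedge u=y\wedge u'$ for all $y\in Y_+$: the inequality $u'\le u$ gives $y\wedge u'\le y\wedge u$, while $y\wedge u$ itself belongs to $\{u\wedge v:v\in Y_+\}$ (as $y\in Y_+$), so $y\wedge u\le u'$ and hence $y\wedge u\le y\wedge u'$. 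Moreover $(u'-u\wedge v)_{v\in Y_+}$ is a net indexed by the upward-directed set $Y_+$ which decreases to $0$, so by the Lebesgue property of $(u\tau)^u$ it is $(u\tau)^u$-null.

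Now fix a solid $(u\tau)^u$-neighbourhood $W$ of $0$ and a solid one $W'$ with $W'+W'\subseteq W$, and choose $v_0\in Y_+$ with $u'-u\wedge v_0\in W'$. Since the net $(y_\alpha\wedge v_0)$ is order bounded in $X$ and $y_\alpha\wedge v_0\xrightarrow{\tau}0$ by hypothesis, Proposition~\ref{8} together with the fact that $(u\tau)^u$ restricts to $u\tau$ on $X$ gives $y_\alpha\wedge v_0\xrightarrow{(u\tau)^u}0$; pick $\alpha_0$ with $y_\alpha\wedge v_0\in W'$ for $\alpha\ge\alpha_0$. Then for $\alpha\ge\alpha_0$, using $y_\alpha\wedge u=y_\alpha\wedge u'$ and the inequality $a\wedge(b+c)\le a\wedge b+c$ valid for $a,b,c\ge 0$,
\[
0\le y_\alpha\wedge u=y_\alpha\wedge u'\le y_\alpha\wedge(u\wedge v_0)+(u'-u\wedge v_0)\le (y_\alpha\wedge v_0)+(u'-u\wedge v_0)\in W'+W'\subseteq W,
\]
so $y_\alpha\wedge u\in W$ by solidity. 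Hence $y_\alpha\wedge u\xrightarrow{(u\tau)^u}0$ in $X^u$; but $(y_\alpha\wedge u)$ lies in $X$ and is order bounded there, so again by $(u\tau)^u|_X=u\tau$ and Proposition~\ref{8} we get $y_\alpha\wedge u\xrightarrow{\tau}0$ in $X$. As $u\in X_+$ was arbitrary, $y_\alpha\xrightarrow{u\tau}0$ in $X$, i.e.\ $y_\alpha\to 0$ in $(u\tau)|_Y$, finishing the argument.

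The main obstacle is the middle step: recognizing that the hard direction reduces to approximating an arbitrary $u\in X_+$ by the directed family $\{u\wedge v:v\in Y_+\}$, and that $y\wedge u=y\wedge\bigl(\sup_v u\wedge v\bigr)$ for $y\in Y_+$ — which forces us to leave $X$ for a Dedekind complete overlattice (here $X^u$, via Corollary~\ref{24.2}) where the supremum exists and the Lebesgue property of the extended topology can be invoked. After that the estimates are routine solid-neighbourhood manipulations. (One could equally work in the order completion $X^\delta$ using the Lebesgue extension $\tau^\delta$ and Corollary~\ref{18.1}; the structure of the proof is identical.)
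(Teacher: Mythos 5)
Your argument is correct: the reduction to positive nets, the identity $y\wedge u=y\wedge u'$ with $u'=\sup_{v\in Y_+}(u\wedge v)$ taken in the order complete lattice $X^u$, the use of the Lebesgue property of $(u\tau)^u$ to make $u'-u\wedge v_0$ small, and the final return to $\tau$ via Proposition~\ref{8} all check out. It is, however, a genuinely different route from the paper's. The paper never leaves $X$: it observes that $u(\tau|_Y)$-convergence of a net in $Y$ is exactly unbounded convergence tested against the ideal $I(Y)$ (since $Y$ majorizes $I(Y)$), upgrades this to $u_{I(Y)\oplus I(Y)^d}\tau$-convergence because $I(Y)\oplus I(Y)^d$ is an order dense ideal, and then, since order density makes $u_{I(Y)\oplus I(Y)^d}\tau$ Hausdorff (Proposition~\ref{35}) and $u\tau$ is minimal (Theorem~\ref{13375}), concludes $u_{I(Y)\oplus I(Y)^d}\tau=u\tau$; so the equality $u(\tau|_Y)=(u\tau)|_Y$ falls out of the $u_A\tau$ machinery plus the uniqueness/minimality theory. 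Your proof trades that structural input for the extension result Corollary~\ref{24.2} (or, as you note, the $X^\delta$ variant via Theorem 4.12 of [\ref{705}], which is a lighter tool) together with a hands-on solid-neighbourhood estimate; what it buys is transparency --- it shows precisely which approximation of a test vector $u\in X_+$ by the directed family $u\wedge v$, $v\in Y_+$, forces the two unbounded convergences to agree --- at the cost of being longer and of passing to a Dedekind complete overlattice. Two cosmetic remarks: order boundedness of $(y_\alpha\wedge v_0)$ is not needed to get $(u\tau)^u$-nullity, since $\tau$-null already implies $u\tau$-null; and in the $X^\delta$ variant you can run the same estimate directly with solid $\tau^\delta$-neighbourhoods, which lets you skip the final appeal to Proposition~\ref{8}.
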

\begin{proof}
It is clear that $u(\tau|_Y)\subseteq (u\tau)|_Y$.
\\

Suppose $(y_{\alpha})$ is a net in $Y$ and $y_{\alpha}\xrightarrow{u(\tau|_Y)}0$. Since $Y$ is majorizing in $I(Y)$, the ideal generated by $Y$ in $X$, $y_{\alpha}\xrightarrow{u_{I(Y)}\tau}0$. By Theorem 1.36 of [\ref{709}], $I(Y)\oplus I(Y)^d$ is an order dense ideal in $X$. Let $v \in (I(Y)\oplus I(Y)^d)_+$. Then $v=a+b$ where $a \in I(Y)$ and $b\in I(Y)^d$. Notice $\lvert y_{\alpha}\rvert \wedge v\leq \lvert y_{\alpha}\rvert \wedge \lvert a\rvert+\lvert y_{\alpha}\rvert \wedge \lvert b\rvert=\lvert y_{\alpha}\rvert \wedge \lvert a\rvert \xrightarrow{\tau}0$. This proves that $y_{\alpha}\xrightarrow{u_{I(Y)\oplus I(Y)^d}\tau}0$. We conclude that $(u_{I(Y)\oplus I(Y)^d}\tau)|_Y \subseteq u(\tau|_Y)$. Since the other inclusion is obvious, $(u_{I(Y)\oplus I(Y)^d}\tau)|_Y = u(\tau|_Y)$.
\\

Since $I(Y)\oplus I(Y)^d$ is order dense in $X$, $u_{I(Y)\oplus I(Y)^d}\tau$ is a  Hausdorff locally solid topology on $X$. Clearly, $u_{I(Y)\oplus I(Y)^d}\tau \subseteq u\tau$ so, since $u\tau$ is a Hausdorff $uo$-Lebesgue topology and hence minimal, $u_{I(Y)\oplus I(Y)^d}\tau = u\tau$. This proves the claim.
\end{proof}
The next proposition is an analogue of [\ref{704}] Lemma 2.11.
\begin{proposition}\label{38}
Suppose $(X,\tau)$ is a locally solid vector lattice and $E\subseteq X_+$. Then $x_{\alpha}\xrightarrow{u_{\overline{I(E)}^{\tau}}\tau}x$ if and only if $\lvert x_{\alpha}-x\rvert \wedge e \xrightarrow{\tau}0$ for all $e\in E$. In particular, if there exists $e\in X_+$ such that  $\overline{I}_e^{\tau}=X$ then $x_{\alpha}\xrightarrow{u\tau}0$ iff $|x_{\alpha}|\wedge e\xrightarrow{\tau}0$.
\end{proposition}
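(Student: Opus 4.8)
The plan is to reduce everything to the case $x=0$, dispatch the easy implication by inclusion of test sets, and handle the substantive implication by first shrinking the test set from $\overline{I(E)}^{\tau}$ down to $I(E)$ using elementary lattice inequalities, and then enlarging it back again via Proposition~\ref{36}. First I would observe that $x_{\alpha}\xrightarrow{u_A\tau}x$ is the same as $(x_{\alpha}-x)\xrightarrow{u_A\tau}0$ and that $\lvert x_{\alpha}-x\rvert\wedge e=\lvert (x_{\alpha}-x)\rvert\wedge e$, so we may assume $x=0$. One implication is then immediate: since $E\subseteq I(E)\subseteq\overline{I(E)}^{\tau}$, if $x_{\alpha}\xrightarrow{u_{\overline{I(E)}^{\tau}}\tau}0$ then $\lvert x_{\alpha}\rvert\wedge e\xrightarrow{\tau}0$ for every $e\in E$ by the very definition of $u_A\tau$-convergence.

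For the converse, assume $\lvert x_{\alpha}\rvert\wedge e\xrightarrow{\tau}0$ for all $e\in E$. Since the elements of $E$ are positive, every $a\in I(E)$ satisfies $\lvert a\rvert\leq\lambda(e_1+\dots+e_m)$ for suitable $e_1,\dots,e_m\in E$ and $\lambda\geq0$; picking an integer $n\geq\lambda$ with $n\geq1$ and using the pointwise-checked inequalities $b\wedge(nc)\leq n(b\wedge c)$ and $b\wedge(c_1+c_2)\leq b\wedge c_1+b\wedge c_2$ for positive $b,c,c_i$, one gets $\lvert x_{\alpha}\rvert\wedge\lvert a\rvert\leq n\sum_{i=1}^m\bigl(\lvert x_{\alpha}\rvert\wedge e_i\bigr)$. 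The right-hand side is a finite sum of $\tau$-null nets, hence $\tau$-null, and then solidity of the base neighbourhoods of $\tau$ forces $\lvert x_{\alpha}\rvert\wedge\lvert a\rvert\xrightarrow{\tau}0$. This says exactly that $x_{\alpha}\xrightarrow{u_{I(E)}\tau}0$. Now, $I(E)$ and $\overline{I(E)}^{\tau}$ have the same $\tau$-closure, so Proposition~\ref{36} (or rather the identity $u_A\tau=u_{\overline{A}}\tau$ established in its proof, which already uses that the $\tau$-closure of an ideal is an ideal) gives $u_{I(E)}\tau=u_{\overline{I(E)}^{\tau}}\tau$; hence $x_{\alpha}\xrightarrow{u_{\overline{I(E)}^{\tau}}\tau}0$, as required.

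Finally, the ``in particular'' clause follows by specializing to $E=\{e\}$: then $I(E)=I_e$, the hypothesis reads $\overline{I(E)}^{\tau}=X$, so $u_{\overline{I(E)}^{\tau}}\tau=u_X\tau=u\tau$, and the condition ``$\lvert x_{\alpha}-x\rvert\wedge e'\xrightarrow{\tau}0$ for all $e'\in E$'' collapses (with $x=0$) to ``$\lvert x_{\alpha}\rvert\wedge e\xrightarrow{\tau}0$''. I do not expect a genuine obstacle: the only points needing a little care are the verification of the lattice inequality bounding $\lvert x_{\alpha}\rvert\wedge\lvert a\rvert$ by a multiple of a finite sum of the $\lvert x_{\alpha}\rvert\wedge e_i$, and the fact that $\overline{I(E)}^{\tau}$ is itself an ideal so that $u_{\overline{I(E)}^{\tau}}\tau$ is defined and Proposition~\ref{36} is applicable — and the latter is already built into the framework of that proposition.
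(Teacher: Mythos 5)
Your proof is correct, and it follows exactly the route the paper intends (the paper states Proposition~\ref{38} without proof): the passage from the test set $E$ to the ideal $I(E)$ costs nothing by the standard inequalities $b\wedge(c_1+c_2)\le b\wedge c_1+b\wedge c_2$ and $b\wedge(nc)\le n(b\wedge c)$ together with solidity, exactly as in the remark following Definition~\ref{33}, and the passage to the closure $\overline{I(E)}^{\tau}$ is Proposition~\ref{36}. The reduction to $x=0$, the easy direction by inclusion of test vectors, and the specialization $E=\{e\}$ with $u_X\tau=u\tau$ for the ``in particular'' clause are all handled correctly.
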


Next we present an easy generalization of  Corollary 3.2 in [\ref{710}]. 
\begin{corollary}
Suppose $A$ is a $\tau$-closed ideal of a metrizable locally solid vector lattice $(X,\tau)$. Suppose that $e\in A_+$ is such that $\overline{I_e}^{\tau}=A$. If $x_{\alpha}\xrightarrow{u_A\tau} 0$ in $X$ then there exists $\alpha_1<\alpha_2<\dots$ such that $x_{\alpha_n}\xrightarrow{u_A\tau}0$. 
\end{corollary}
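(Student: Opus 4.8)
The plan is to reduce the statement to ordinary $\tau$-convergence of the single net $(\lvert x_\alpha\rvert\wedge e)$, and then exploit metrizability of $\tau$ --- crucially, of $\tau$ itself and not of $u_A\tau$, which need not be metrizable. First I would apply Proposition~\ref{38} with $E=\{e\}$. Since $I(E)=I_e$ and, by hypothesis, $\overline{I_e}^{\tau}=A$, the proposition gives that for \emph{any} net $(z_\alpha)$ in $X$ one has $z_\alpha\xrightarrow{u_A\tau}0$ if and only if $\lvert z_\alpha\rvert\wedge e\xrightarrow{\tau}0$; this is the ``witnessed by a single vector'' characterization of $u_A\tau$-convergence. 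Applied to the given net it yields $y_\alpha:=\lvert x_\alpha\rvert\wedge e\xrightarrow{\tau}0$. (The hypothesis that $A$ is $\tau$-closed plays no role beyond making the equality $\overline{I_e}^{\tau}=A$ sensible.)

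Next, using metrizability of $\tau$, fix a countable decreasing base $\{V_n\}_{n\in\mathbb{N}}$ of solid $\tau$-neighbourhoods of zero. Since $y_\alpha\xrightarrow{\tau}0$, for each $n$ there is an index $\beta_n$ with $y_\alpha\in V_n$ whenever $\alpha\ge\beta_n$, and, replacing $\beta_n$ by an upper bound of $\beta_1,\dots,\beta_n$, we may assume $(\beta_n)$ is increasing. I would then choose indices $\alpha_1<\alpha_2<\cdots$ inductively so that $\alpha_n\ge\beta_n$ for each $n$; this is possible since the underlying directed set (which, as usual, may be taken to have no maximal element) admits an index strictly above both $\alpha_{n-1}$ and $\beta_n$. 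By construction, for every $n$ and every $m\ge n$ we have $y_{\alpha_m}\in V_m\subseteq V_n$, so the \emph{sequence} $(y_{\alpha_n})_n$ is $\tau$-null, i.e. $\lvert x_{\alpha_n}\rvert\wedge e\xrightarrow{\tau}0$ as $n\to\infty$.

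Finally, I would invoke the equivalence of Proposition~\ref{38} once more, this time for the sequence $(x_{\alpha_n})$ regarded as a net indexed by $\mathbb{N}$, to conclude that $x_{\alpha_n}\xrightarrow{u_A\tau}0$, which is the assertion. I do not anticipate any real obstacle here: the only conceptual point worth isolating is that metrizability is deployed on $\tau$ and transferred through the reduction $x_\alpha\xrightarrow{u_A\tau}0\iff\lvert x_\alpha\rvert\wedge e\xrightarrow{\tau}0$; everything else (the increasing choice of indices, the minor set-theoretic caveat about maximal elements in the footnote above) is routine bookkeeping.
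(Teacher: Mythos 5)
Your argument is correct and is exactly the intended one: the paper states this corollary without proof (as an ``easy generalization'' of Corollary 3.2 in [\ref{710}]), and the expected route is precisely your reduction via Proposition~\ref{38} with $E=\{e\}$ to the equivalence $x_\alpha\xrightarrow{u_A\tau}0\iff\lvert x_\alpha\rvert\wedge e\xrightarrow{\tau}0$, followed by extracting increasing indices against a countable decreasing base for the metrizable topology $\tau$. Your handling of the two side points (that $\tau$-closedness of $A$ is automatic from $\overline{I_e}^{\tau}=A$, and that strict increase of the indices needs the usual no-maximal-element caveat, as in the paper's footnote to Theorem~\ref{23}) is also fine.
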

Next we present a more general version of [\ref{710}] Theorem 6.7. Using the machinery we have built, the proof is very simple.

\begin{theorem}\label{39}
Let $\tau$ be a Hausdorff Lebesgue topology on an order complete vector lattice $X$. Let $X^u$ be the universal completion of $X$ and $\sigma$ the unique Hausdorff Lebesgue topology on $X^u$. Then for every net $(x_{\alpha})$ in $X^u$, $x_{\alpha}\xrightarrow{\sigma}0$ iff $|x_{\alpha}|\wedge u\xrightarrow{\tau}0$ for all $u\in X_+$.
\end{theorem}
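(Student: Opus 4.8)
The plan is to recognize that the right-hand side describes exactly the unbounded convergence $u_X \tau$ where $X$ is viewed as an (order dense) ideal of $X^u$, i.e. the convergence $u_X\sigma$ with test vectors restricted to $X_+$. So the statement to prove is precisely $u_X\sigma = \sigma$, where $\sigma$ is the unique Hausdorff Lebesgue topology on $X^u$. First I would invoke Theorem~\ref{25} (or Theorem~\ref{13375}) to note that $\sigma$, being the unique Hausdorff Lebesgue topology on the universally complete vector lattice $X^u$, is automatically unbounded: by Theorem~\ref{24.1}, a Hausdorff Lebesgue topology on $X^u$ extends (trivially) to a locally solid topology on $(X^u)^u = X^u$, hence is unbounded. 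So $\sigma = u\sigma = u_{X^u}\sigma$.

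Next I would show $u_X\sigma = u_{X^u}\sigma$ using Proposition~\ref{36}: since $X$ is order dense in $X^u$ and $\sigma$ is Hausdorff and Lebesgue, $X$ is $\sigma$-dense in $X^u$. Indeed, an order dense sublattice of a vector lattice carrying a Hausdorff Lebesgue topology is automatically topologically dense — for any $v \in (X^u)_+$ one can find a net in $X_+$ increasing to $v$ (order density gives a net in $X$ with supremum $v$ after truncating), and by the Lebesgue property this net $\sigma$-converges to $v$. Hence $\overline{I(X)}^{\sigma} = \overline{X^u}^{\sigma} = X^u = \overline{I(X^u)}^{\sigma}$, and Proposition~\ref{36} yields $u_X\sigma = u_{X^u}\sigma = \sigma$.

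Finally, I would translate this back. By Corollary~\ref{18.1} (or directly, since $X$ is majorizing and hence regular in its order completion, and $X^u$ is the universal completion of $X$), the statement "$|x_\alpha| \wedge u \xrightarrow{\tau} 0$ for all $u \in X_+$" for a net in $X^u$ is exactly the statement "$x_\alpha \xrightarrow{u_X\sigma} 0$": one checks that $|x_\alpha|\wedge u$ computed in $X^u$ lies in $X$ and that $\tau$- and $\sigma$-convergence agree on order bounded subsets of $X$ by Theorem~4.20 of [\ref{705}] (all Hausdorff Lebesgue topologies agree there), while $\sigma$ restricted to $X$ is a Hausdorff Lebesgue topology. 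Chaining the equalities gives $x_\alpha \xrightarrow{\sigma} 0$ iff $|x_\alpha| \wedge u \xrightarrow{\tau} 0$ for all $u \in X_+$.

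The main obstacle I anticipate is the bookkeeping around which topology the truncations $|x_\alpha|\wedge u$ are tested in: one must be careful that "$\xrightarrow{\tau} 0$" (a net in $X$) and "$\xrightarrow{\sigma} 0$" coincide on these order bounded nets, which is where Theorem~4.20 of [\ref{705}] (all Hausdorff Lebesgue topologies on $X$ induce the same topology on order bounded sets) is essential; everything else is an application of Proposition~\ref{36} together with the uniqueness of Hausdorff Lebesgue topologies on $X^u$.
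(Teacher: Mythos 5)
Your proposal is correct, and it reaches the same core identity the paper exploits, namely $u_X\sigma=\sigma$ on $X^u$, but by a somewhat different decomposition. The paper's proof is a two-line combination of Theorem~\ref{555} with Corollary~\ref{24.2}: Theorem~\ref{555} already packages, via minimality of $u\tau_1$ (Theorem~\ref{13375}), the fact that unbounding a Hausdorff Lebesgue topology against any order dense ideal yields the minimal topology, and Corollary~\ref{24.2} identifies $\sigma$ with the (complete, unbounded) extension of $u\tau$. You instead reprove the needed special case by hand: unboundedness of $\sigma$ from Theorem~\ref{24.1}(ii) applied on $X^u$ (using $(X^u)^u=X^u$), then $u_X\sigma=u_{X^u}\sigma$ from Proposition~\ref{36} together with the observation that an order dense sublattice is topologically dense under a Lebesgue topology. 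That route is perfectly valid and arguably more self-contained, at the cost of redoing work that Theorem~\ref{555} (via minimality) does in one stroke; the paper's route also records the extra information that the common topology is the minimal one on $X^u$.

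Two small repairs to your bookkeeping paragraph. First, the reason the truncations $\lvert x_{\alpha}\rvert\wedge u$ lie in $X$ is precisely the order completeness hypothesis: an order complete, order dense sublattice of an Archimedean vector lattice is an ideal, so $X$ is an order dense ideal of $X^u$ (this is where the paper uses the hypothesis); ``$X$ is majorizing and hence regular in its order completion'' does not give this, and Corollary~\ref{18.1} does not apply since your net lives in $X^u$, not in $X$. Second, the result you want for the order bounded nets $(\lvert x_{\alpha}\rvert\wedge u)\subseteq[0,u]$ is Theorem~4.22 of [\ref{705}] (all Hausdorff Lebesgue topologies induce the same topology on order bounded sets), applied to $\tau$ and $\sigma|_X$, the latter being Hausdorff Lebesgue because $X$ is regular in $X^u$; with these citations fixed your argument goes through.
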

\begin{proof}
Since $X$ admits a Hausdorff Lebesgue topology, $X^u$ admits a unique Hausdorff Lebesgue topology by [\ref{705}] Theorems 7.53 and 7.54. Since $X$ is order complete, $X$ is an order dense ideal of $X^u$. Combine Theorem~\ref{555} with Corollary~\ref{24.2}.
\end{proof}
Note that we can replace the order completeness assumption with $\tau$-completeness because in a topologically complete Hausdorff vector lattice, the Lebesgue property implies order completeness.  
\\

Next we look for analogues of Propositions 9.1 and 9.2 in [\ref{710}]. Compare them with Theorem~\ref{23}, Corollary~\ref{10201} and Proposition~\ref{23.1}.
\begin{proposition}\label{1020}
Let $A$ be an ideal of a locally solid vector lattice $(X,\tau)$. TFAE:
\begin{enumerate}
\item $(A,\tau|_A)$ satisfies the pre-Lebesgue property;
\item Every disjoint sequence in $X$ is $u_A\tau$-null;
\item Every disjoint net in $X$ is $u_A\tau$-null; 
\item $(X,u_A\tau)$ satisfies the pre-Lebesgue property.
\end{enumerate}
\end{proposition}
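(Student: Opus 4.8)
The plan is to prove the cycle (i)$\Rightarrow$(ii)$\Rightarrow$(iii)$\Rightarrow$(iv)$\Rightarrow$(i), mirroring the structure of the proof of Theorem~\ref{23} but carrying the test ideal $A$ through each step. Before starting, I would record the key reduction: since $(u_A\tau)|_A = u(\tau|_A)$ (noted just before Proposition~\ref{35}), and $(A,\tau|_A)$ is itself a locally solid vector lattice, several of the equivalences are just Theorem~\ref{23} applied inside $A$ — the only real content is moving statements between $A$ and the ambient space $X$.

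First I would show (i)$\Rightarrow$(iii). Assume $(A,\tau|_A)$ is pre-Lebesgue and let $(x_\alpha)$ be a disjoint net in $X$. Fix $a \in A_+$. Then $(|x_\alpha| \wedge a)$ is a disjoint order bounded net sitting inside $A$, hence by Theorem~\ref{23} (the equivalence (i)$\Leftrightarrow$(vi) applied to the locally solid vector lattice $(A,\tau|_A)$, together with the fact that $\tau|_A$ and $u(\tau|_A)$ agree on order bounded nets) we get $|x_\alpha| \wedge a \xrightarrow{\tau} 0$. Since this holds for every $a \in A_+$, by definition $x_\alpha \xrightarrow{u_A\tau} 0$. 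The implication (iii)$\Rightarrow$(ii) is trivial. For (ii)$\Rightarrow$(i), suppose every disjoint sequence in $X$ is $u_A\tau$-null and let $(a_n)$ be an order bounded disjoint sequence in $A$, say $(a_n) \subseteq [-a,a]$ with $a \in A_+$; then $(a_n)$ is a disjoint sequence in $X$, so $a_n \xrightarrow{u_A\tau} 0$, which in particular gives $|a_n| = |a_n| \wedge a \xrightarrow{\tau} 0$; thus every order bounded disjoint sequence of $A$ is $\tau|_A$-null, and Theorem~\ref{23}((iii)$\Rightarrow$(i)) applied to $(A,\tau|_A)$ yields the pre-Lebesgue property for $A$.

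It remains to fold in (iv). For (iv)$\Rightarrow$(ii): if $(X,u_A\tau)$ is pre-Lebesgue then, since $u_A\tau$ is locally solid, every disjoint sequence in $X$ is $u(u_A\tau)$-null by Theorem~\ref{23}((i)$\Rightarrow$(v)); but $u_A\tau$ is unbounded (remarked after Proposition~\ref{34}), so $u(u_A\tau) = u_A\tau$ and we are done. For (i)$\Rightarrow$(iv): combining the already-established (i)$\Rightarrow$(iii) with the disjoint-net characterization, every disjoint net in $X$ is $u_A\tau$-null; restricting attention to order bounded disjoint sequences in $X$ and using that $u_A\tau$ and $(u_A\tau)$-unbounded... — more cleanly, apply Theorem~\ref{23}((vi)$\Rightarrow$(i)) to the locally solid vector lattice $(X, u_A\tau)$ to conclude it is pre-Lebesgue. (Alternatively one can close the diamond by noting (ii) for $X$ with respect to $u_A\tau$ is (v) of Theorem~\ref{23} for $(X,u_A\tau)$, hence equivalent to the pre-Lebesgue property of $u_A\tau$.)

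The only place that needs genuine care — the main obstacle — is the passage between "disjoint order bounded in $A$" and "disjoint in $X$": one must make sure that when a test vector $a$ is drawn from $A$ the truncated net $(|x_\alpha| \wedge a)$ actually lies in $A$ (it does, because $A$ is an ideal and $|x_\alpha| \wedge a \le a \in A$), and that disjointness is preserved under truncation (it is, since $|x_\alpha|\wedge|x_\beta| = 0$ forces $(|x_\alpha|\wedge a)\wedge(|x_\beta|\wedge a)=0$). Once this bookkeeping is in place, every implication is a direct citation of the appropriate clause of Theorem~\ref{23} applied either to $(A,\tau|_A)$ or to $(X,u_A\tau)$, so I would keep the written proof short and simply indicate which instance of Theorem~\ref{23} is being invoked at each arrow.
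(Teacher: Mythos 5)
Your proposal is correct, and its core is the same as the paper's: every implication is reduced to Theorem~\ref{23} by truncating against test vectors $a\in A_+$ (with exactly the bookkeeping you flag — $\lvert x_\alpha\rvert\wedge a$ lies in $A$ because $A$ is an ideal, disjointness survives truncation, and $\tau|_A$ agrees with $u(\tau|_A)$ on order bounded nets by Proposition~\ref{8}). The one place you genuinely diverge is the treatment of (iv). You exploit the remark after Proposition~\ref{34} that $u_A\tau$ is unbounded, i.e.\ $u(u_A\tau)=u_A\tau$, so that clauses (v) and (vi) of Theorem~\ref{23} applied to $(X,u_A\tau)$ literally read ``every disjoint sequence (net) in $X$ is $u_A\tau$-null''; this makes (ii) and (iii) tautologically equivalent to (iv) and closes the cycle entirely inside $X$. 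The paper instead proves (iv)$\Rightarrow$(i) by passing down to $A$: order bounded disjoint sequences of $A$ are $u_A\tau$-null by Theorem~\ref{23} applied to $(X,u_A\tau)$, hence $(u_A\tau)|_A=u(\tau|_A)$ is pre-Lebesgue, and Corollary~\ref{10201} (for $\tau|_A$) then gives the pre-Lebesgue property of $\tau|_A$. Both routes are short and valid; yours is more symmetric and avoids Corollary~\ref{10201}, at the cost of leaning on the ``can be checked'' identity $u_A(u_B\tau)=u_{A\cap B}\tau$ (which is indeed easy to verify directly, since $\lvert x_\alpha\rvert\wedge u\wedge a\xrightarrow{\tau}0$ for all $u\in X_+$, $a\in A_+$ is equivalent, by taking $u=a$ and using solidity, to $\lvert x_\alpha\rvert\wedge a\xrightarrow{\tau}0$ for all $a\in A_+$), whereas the paper only needs the stated identity $(u_A\tau)|_A=u(\tau|_A)$ together with an already-proved corollary.
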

\begin{proof}
To prove that (i)$\Rightarrow$(ii) let $(x_n)$ be a disjoint sequence in $X$. Then for every $a\in A_+$, $\lvert x_n\rvert \wedge a$ is an order bounded disjoint sequence in $A$ and hence $\tau$-converges to zero by Theorem~\ref{23}. This proves $x_n \xrightarrow{u_A\tau}0$. An argument already presented in the proof of Theorem~\ref{23} proves (ii)$\Leftrightarrow$(iii). (ii)$\Rightarrow$(iv) is obvious.
\\

(iv)$\Rightarrow$(i): Suppose $u_A\tau$ is a pre-Lebesgue topology on $X$. We first show that $(u_A\tau)|_A$ is a pre-Lebesgue topology on $A$. We again use Theorem~\ref{23}. Let $(a_n)$ be a disjoint order bounded sequence in $A$. Then $(a_n)$ is also a disjoint order bounded sequence in $X$ and hence $a_n \xrightarrow{u_A\tau}0$. Thus $(u_A\tau)|_A$ satisfies (iii) of Theorem~\ref{23} and we conclude that $(u_A\tau)|_A$ is pre-Lebesgue. Next notice that $(A,(u_A\tau)|_A)=(A,u(\tau|_A))$, so $u(\tau|_A)$ has the pre-Lebesgue property. Finally, apply Corollary~\ref{10201}.
\end{proof}
\begin{proposition}\label{41}
Let $A$ be an ideal of a Hausdorff locally solid vector lattice $(X,\tau)$. TFAE:
\begin{enumerate}
\item $\tau|_A$ is Lebesgue;
\item $u(\tau|_A)$ is Lebesgue;
\item $u(\tau|_A)$ is $uo$-Lebesgue;
\item $u_A\tau$ is $uo$-Lebesgue;
\item $u_A\tau$ is Lebesgue.
\end{enumerate}
\end{proposition}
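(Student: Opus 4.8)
The plan is to prove the cycle (i)$\Rightarrow$(ii)$\Rightarrow$(iii)$\Rightarrow$(iv)$\Rightarrow$(v)$\Rightarrow$(i), using the identity $(A,(u_A\tau)|_A)=(A,u(\tau|_A))$ that was recorded just before Proposition~\ref{1020}, together with Theorem~\ref{25} applied to the sublattice $A$. The crucial observation is that since $A$ is an ideal, $(A,\tau|_A)$ is itself a Hausdorff locally solid vector lattice, so Theorem~\ref{25} tells us that for the topology $u(\tau|_A)$ on $A$ the properties ``Lebesgue'', ``Lebesgue and unbounded'', and ``$uo$-Lebesgue'' are linked exactly as in that theorem; and $u(\tau|_A)$ is automatically unbounded since it is an unbounded topology by construction.

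First I would prove (i)$\Rightarrow$(ii): if $\tau|_A$ is Lebesgue then by Proposition~\ref{7} applied to $(A,\tau|_A)$, $u(\tau|_A)$ is $uo$-Lebesgue on $A$, hence Lebesgue on $A$. Via the identity $(A,u(\tau|_A))=(A,(u_A\tau)|_A)$, this says $(u_A\tau)|_A$ is Lebesgue. To upgrade to (ii), i.e. that $u_A\tau$ is Lebesgue on all of $X$: suppose $x_\alpha\downarrow 0$ in $X$. For each $a\in A_+$ one has $|x_\alpha|\wedge a=x_\alpha\wedge a\downarrow 0$ in $A$ (using that $A$ is an ideter, so $x_\alpha\wedge a\in A$), hence $x_\alpha\wedge a\xrightarrow{\tau|_A}0$ by the Lebesgue property of $\tau|_A$, i.e. $x_\alpha\wedge a\xrightarrow{\tau}0$; thus $x_\alpha\xrightarrow{u_A\tau}0$. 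So $u_A\tau$ is Lebesgue on $X$. Next (ii)$\Rightarrow$(iii): restrict to $A$. If $u_A\tau$ is Lebesgue on $X$ then $(u_A\tau)|_A=u(\tau|_A)$ is Lebesgue on $A$; since it is also unbounded (being an unbounded topology), Theorem~\ref{25} gives that $u(\tau|_A)$ is $uo$-Lebesgue on $A$, which is exactly (iii). The implication (iii)$\Rightarrow$(iv) is again just the identity $(A,u(\tau|_A))=(A,(u_A\tau)|_A)$ rephrased.

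For (iv)$\Rightarrow$(v): if $u_A\tau$ restricted to $A$, namely $u(\tau|_A)$, is $uo$-Lebesgue, then in particular it is Lebesgue on $A$, so by the argument in the previous paragraph (exactly the proof of (i)$\Rightarrow$(ii)) $u_A\tau$ is Lebesgue on $X$, giving (v). Finally (v)$\Rightarrow$(i): if $u_A\tau$ is Lebesgue on $X$, then restricting to $A$ gives that $u(\tau|_A)$ is Lebesgue on $A$; by Proposition~\ref{23.1} applied to $(A,\tau|_A)$ — which says $\tau|_A$ is Lebesgue iff $u(\tau|_A)$ is — we conclude $\tau|_A$ is Lebesgue, which is (i).

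The only real subtlety, and the step I would be most careful with, is the lift from ``$(u_A\tau)|_A$ Lebesgue'' to ``$u_A\tau$ Lebesgue on $X$'': one must check that a downward-directed net $x_\alpha\downarrow 0$ in $X$ really does produce $x_\alpha\wedge a\downarrow 0$ in $A$ for each $a\in A_+$, which holds because $A$ is an ideal (so the infima stay in $A$ and agree with those computed in $X$), and because the Lebesgue property is equivalent to $\tau$-convergence of nets decreasing to $0$ (independently of the definition of order convergence, as noted after Definition~\ref{3}). Everything else is bookkeeping with the maps $u$, $u_A$ and the sublattice identity, plus invocations of Theorem~\ref{25}, Proposition~\ref{7} and Proposition~\ref{23.1} applied to the locally solid vector lattice $(A,\tau|_A)$ in place of $(X,\tau)$.
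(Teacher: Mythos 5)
There is a genuine gap, and it sits exactly at statement (iv). Throughout your write-up you conflate (ii) (``$u(\tau|_A)$ is Lebesgue'', a property of a topology on $A$) with (v) (``$u_A\tau$ is Lebesgue'', a property of a topology on $X$); what your arguments actually establish is (i)$\Rightarrow$(v) (your decreasing-net argument, which is correct: the Lebesgue property may be tested on nets $x_\alpha\downarrow 0$, and $x_\alpha\wedge a\downarrow 0$ in $A$ for each $a\in A_+$ because $A$ is an ideal), then (v)$\Rightarrow$(ii)$\Rightarrow$(iii) (restriction to the regular sublattice $A$ plus Theorem~\ref{25}), and (ii)$\Leftrightarrow$(i) (Proposition~\ref{23.1} applied to $(A,\tau|_A)$). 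What is never proved is that $u_A\tau$ is $uo$-Lebesgue on $X$. Your claim that (iii)$\Rightarrow$(iv) ``is just the identity $(u_A\tau)|_A=u(\tau|_A)$ rephrased'' is false: that identity only concerns nets lying in $A$, whereas (iv) requires every net in $X$ that is $uo$-null in $X$ to be $u_A\tau$-null, and such nets are in general neither in $A$, nor order bounded, nor monotone, so none of your arguments reach them.

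The missing implication needs an actual argument, which is what the paper supplies as (i)$\Rightarrow$(iv): if $x_\alpha\xrightarrow{uo}0$ in $X$ and $a\in A_+$, then $\lvert x_\alpha\rvert\wedge a\xrightarrow{uo}0$ in $X$; this net lies in the ideal (hence regular sublattice) $A$, so it is $uo$-null in $A$, and being order bounded in $A$ (by $a$) it is in fact $o$-null in $A$; the Lebesgue property of $\tau|_A$ then gives $\lvert x_\alpha\rvert\wedge a\xrightarrow{\tau}0$, i.e.\ $x_\alpha\xrightarrow{u_A\tau}0$. (Alternatively, from your (iii) one gets $\lvert x_\alpha\rvert\wedge a\xrightarrow{u(\tau|_A)}0$ and then upgrades to $\tau$-convergence via Proposition~\ref{8}, since the net is order bounded.) With this inserted, and with the trivial remark that (iv)$\Rightarrow$(v) because $o$-convergent nets are $uo$-convergent, the cycle closes. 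Note also that your step labelled ``(iv)$\Rightarrow$(v)'' as written starts from (iii) rather than (iv), and quietly needs Proposition~\ref{23.1} (or Proposition~\ref{8}) to pass from ``$u(\tau|_A)$ Lebesgue'' back to ``$\tau|_A$ Lebesgue'' before re-running your decreasing-net argument.
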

\begin{proof}
The equivalence of (i), (ii) and (iii) has already been proven.
\\

(i)$\Rightarrow$(iv): Suppose $x_{\alpha}\xrightarrow{uo}0$ in $X$ where $(x_{\alpha})$ is a net in $X$. Fix $a\in A_+$. Then $\lvert x_{\alpha}\rvert\wedge a \xrightarrow{uo}0$ in $X$ and hence in $A$ since $A$ is an ideal. Since the net $(\lvert x_{\alpha}\rvert\wedge a)$ is order bounded in $A$, this is equivalent to $\lvert x_{\alpha}\rvert \wedge a \xrightarrow{o}0$ in $A$. Since $\tau|_A$ is Lebesgue this means $\lvert x_{\alpha}\rvert \wedge a \xrightarrow{\tau}0$. We conclude that $x_{\alpha}\xrightarrow{u_A\tau}0$ and, therefore, $u_A\tau$ is $uo$-Lebesgue. (iv)$\Rightarrow$(v) is trivial. (v)$\Rightarrow$(ii) since the restriction of a Lebesgue topology to a regular sublattice is Lebesgue, and $(u_A\tau)|_A=u(\tau|_A)$.
\end{proof}

\subsection*{Acknowledgements and further remarks.} The author would like to thank Dr.~Vladimir Troitsky for valuable comments and mentorship. 
\\

After the work on this paper was essentially complete, the author learned of the recent preprints [\ref{MNVL}] and [\ref{utau}]. The latter preprint also focuses on unbounded locally solid topologies, and there is a minor overlap. Specifically, it includes versions of our Theorem~\ref{1}, Lemma~\ref{18}, and Corollary~\ref{18.1}, as well as special cases of Lemma~\ref{9}, Lemma~\ref{38.8}, and Proposition~\ref{38}.
\\

Within six months of submitting this paper, I submitted the sequel papers [\ref{KT}] and [\ref{Tay2}]. Since those papers were accepted earlier than this, there are some minor misalignments of the references. In [\ref{thesis}] I have combined, extended, and reordered the results.

\end{document}